\theoremstyle{plain}
\newtheorem{lemma}{Lemma}
\newtheorem{proposition}{Proposition}
\newtheorem{definition}{Definition}
\newtheorem{theorem}{Theorem}
\newcommand{\newremark}[2]{
  \theoremstyle{plain}
  \theoremheaderfont{\normalfont\itshape}
  \theorembodyfont{\normalfont}
  \theoremseparator{}
  \theoremsymbol{}
  \newtheorem{#1}[theorem]{#2}
}
\newcommand{\newproblem}[2]{
  \theoremstyle{nonumberplain}
  \theoremheaderfont{\normalfont\bfseries}
  \theorembodyfont{\normalfont}
  \theoremseparator{}
  \theoremsymbol{}
  \newtheorem{#1}[theorem]{#2}
}
\theoremstyle{nonumberplain}
\newtheorem{proof}{Proof}
\newcommand\blfootnote[1]{%
    \begingroup  
    \renewcommand\thefootnote{}\footnote{#1}
    \addtocounter{footnote}{0}
    \endgroup
}
\title{Distributed algorithm for continuous-type Bayesian Nash Equilibrium in Subnetwork Zero-sum Games\blfootnote{This work was supported by the National Natural Science Foundation of China (No. 62173250), and by Shanghai Municipal Science and Technology Major Project (No. 2021SHZDZX0100).}}
\author{Hanzheng Zhang\footnote{Key Laboratory of Systems and Control, Academy of Mathematics and Systems Science, Beijing, China.}\ \footnote{School of Mathematical Sciences, University of Chinese Academy of Sciences, Beijing, China (zhanghanzheng@amss.ac.cn).}, Guanpu Chen\footnote{JD Explore Academy, Beijing, China (chengp@amss.ac.cn).}, Yiguang Hong\footnote{Department of Control Science and Engineering \& Shanghai Research Institute for Intelligent Autonomous Systems, Tongji University, Shanghai, China (yghong@iss.ac.cn).}\ \,\footnotemark[2]}
\date{}
\newcommand\keywords[1]{\textbf{Keywords}: #1}
\begin{document}

\maketitle

\begin{abstract}
In this paper, we consider a continuous-type Bayesian Nash equilibrium (BNE) seeking problem in subnetwork zero-sum games, which is a generalization of deterministic subnetwork zero-sum games and discrete-type Bayesian zero-sum games. In this continuous-type model, because the feasible strategy set is composed of infinite-dimensional functions and is not compact, it is hard to seek a BNE in a non-compact set and convey such complex strategies in network communication. To this end, we design two steps to overcome the above bottleneck. One is a discretization step, where we discretize continuous types and prove that the BNE of the discretized model is an approximate BNE of the continuous model with an explicit error bound. The other one is a communication step, where we adopt a novel compression scheme with a designed sparsification rule and prove that agents can obtain unbiased estimations through compressed communication. Based on the above two steps, we propose a distributed communication-efficient algorithm to practicably seek an approximate BNE, and further provide an explicit error bound and an $O(\ln T/\sqrt{T})$ convergence rate.\\
\par\keywords{Bayesian game, subnetwork zero-sum game, distributed algorithm, equilibrium approximation, communication compression}
\end{abstract}

\section{Introduction}\label{sec_intro}

In recent years, distributed design for decision and control has become more and more important, and distributed algorithms have been proposed for various games \cite{akkarajitsakul2011,basar1989,fang2021,gharesifard2013,xu2022,zeng2011}. With the rapid development of multi-agent systems nowadays, subnetwork zero-sum games, as extensions of zero-sum games, have attracted the attention of researchers. For example, \cite{gharesifard2013} provided a continuous-time distributed equilibrium seeking algorithm for both undirected and directed graphs, while \cite{lou2015} proposed a discrete-time algorithm for equilibrium seeking, and analyzed its convergence. Moreover, \cite{huangs2021} considered an online learning scheme and provided a distributed mirror descent algorithm.

Because of the uncertainties in reality, Bayesian games have attracted a large amount of attention in engineering, computer science, and social science \cite{akkarajitsakul2011,chen2020,grobhans2013,krishna2009}. In Bayesian games, players cannot obtain complete information about the characteristics of the other players, and these characteristics are called types. The distribution of all players' types are public knowledge, while each player knows its own type \cite{harsanyi1967}. Due to the broad applications, the existence and computation of the Bayesian Nash equilibrium (BNE) are thus fundamental problems in the study of various Bayesian games. To this end, many works have investigated BNE with discrete types \cite{akkarajitsakul2011,sola2014,zeng2011} by fixing the types and converting the games to complete-information ones. On this basis, Bayesian zero-sum games, describing a class of zero-sum games with uncertainty, have drawn extensive concerns \cite{bhaskar2016,grobhans2013,li2018}. In addition to the centralized algorithms, there are also many works on distributed Bayesian games \cite{akkarajitsakul2011,sola2014,zeng2011}, where players make decisions based on their own types, local data, and incoming communication through networks.

However, most of the aforementioned works concentrate on discrete-type Bayesian games. In fact, continuous-type Bayesian games are also widespread in engineering and economics \cite{chakraborti2015,krishna2009}. The continuity of types poses challenges in seeking and verifying BNE. Specifically, in continuous-type games, the feasible strategy sets lie in infinite-dimensional spaces which are not compact \cite{milgrom1985}. Due to the lack of the compactness, we cannot apply the fixed point theorem to guarantee the existence of BNE, let alone seek a BNE. To this end, many pioneers have tried to demonstrate the existence of continuous-type BNE and design its computation. For instance, \cite{milgrom1985} analyzed the existence of BNE in virtue of equicontinuous payoffs and absolutely continuous information, while \cite{meirowitz2003} investigated the situation when best responses are equicontinuous. Afterwards, \cite{guos2021} provided an equivalent condition of the equicontinuity and proposed an approximation algorithm for finding a continuous-type BNE. Also, \cite{ui2016} regarded the BNE as the solution to the variational inequality and gave a sufficient condition for the existence of BNE, while \cite{guow2021} gave two variational-inequality-based algorithms when the forms of strategies are prior knowledge.

Considering the development of subnetwork zero-sum games and Bayesian games, it is significant to explore distributed algorithms for seeking BNE in continuous-type Bayesian zero-sum games, since they can be regarded as generalizations of both discrete-type Bayesian zero-sum games \cite{burr2021,li2018} and deterministic subnetwork zero-sum games \cite{gharesifard2013,huangs2021,lou2015}. Nevertheless, the continuous-type models are more challenging to handle than the discrete-type ones in distributed subnetwork zero-sum games. Actually, the challenges come from both continuous types and communication through networks. On the one hand, to seek a continuous-type BNE in a distributed manner, we need an effective method to convert the infinite-dimensional BNE seeking problem into a finite-dimensional one, and the method should be friendly to distributed design. On the other hand, since players need to exchange their strategies with their neighbors, the according strategies, which are infinite-dimensional functions, are hard to be conveyed directly under limited communication capabilities, and we need an effective method to handle the exchange of complex continuous-type strategies under limited communication capabilities.

Therefore, we consider seeking a continuous-type BNE in distributed subnetwork zero-sum games in this paper, where agents in each subnetwork cooperate against the adversarial subnetwork and the two subnetworks are engaged in a zero-sum game. Each subnetwork has its own type following a continuous joint distribution, and each agent knows the type of its own subnetwork. The challenges lie in how to seek a BNE in this continuous-type model and how to efficiently exchange information through the networks. To this end, we introduce a discretization step and a communication compression step to carry forward correspondingly, and propose a distributed BNE seeking algorithm. The contributions are summarized as follows:

(1) We design a distributed algorithm for seeking a BNE in subnetwork zero-sum games. This game model can be regarded as a generalization of discrete-type Bayesian zero-sum games \cite{burr2021,li2018} and deterministic subnetwork zero-sum games \cite{gharesifard2013,huangs2021,lou2015}. With discretizing continuous types and compressing network communication, the algorithm leads to an approximate BNE with an explicit error bound. We show the convergence of the communication-efficient algorithm, as well as its $O(\ln T/\sqrt{T})$ convergence rate.

(2) In the discretization step, to approximate a BNE of continuous-type models, we discretize continuous types in order to make the algorithm implementable. By conducting a distributed-friendly discretization, we prove that the derived BNE sequence in the discretized model converges to the BNE of the continuous model. Moreover, compared with existing works on continuous-type Bayesian games, our method provides an explicit error bound by taking into account of the zero-sum condition \cite{guos2021,huangl2021}, and serves as a practicable method beyond heuristics \cite{grobhans2013,guow2021}.

(3) In the communication step, we adopt compression in the distributed algorithm design to reduce the communication complexity. For this purpose, we design a novel sparsification rule to reduce the communication burden to an acceptable level, since the existing compression methods for optimization \cite{chen2021,wang2018} can hardly be directly applied due to players' interactions here. Correspondingly, we propose a communication scheme to handle the complex interactions of players, which can thus be well adapted to time-varying networks in subnetwork zero-sum games. On this basis, we show that agents can get unbiased estimations of both subnetworks.


The paper is organized as follows. Section \ref{sec_preli} summarizes preliminaries. Section \ref{sec_pf} formulates the continuous-type BNE seeking problem in subnetwork zero-sum games. Section \ref{sec_schedule} outlines a distributed algorithm for seeking BNE. Section \ref{sec_algana} provides the technical details of the algorithm, involving the discretization step and the communication step. Section \ref{sec_alg} gives the detailed algorithm, while Section \ref{sec_conv} provides convergence analysis. Then Section \ref{sec_simu} provides numerical simulations for illustration. Finally, Section \ref{sec_con} concludes the paper.

\section{Preliminaries}\label{sec_preli}

In this section, we give notations and preliminaries about convex analysis, Bayesian games, and graph theory. 

\subsection{Notations}
Denote the $n$-dimensional real Euclidean space by $\mathbb{R}^n$. For $x\in\mathbb{R}$, $\lfloor x\rfloor$ is the greatest integer less than or equal to $x$, and $\lceil x\rceil$ is the least integer greater than or equal to $x$. $\boldsymbol{B}(a,\bar{\varepsilon})$ is a ball with the center $a$ and the radius $\bar{\varepsilon}>0$. Denote $col(x_1,\dots,x_n)=(x_1^T,\dots,x_n^T)^T$ as the column vector stacked with column vectors $x_1,\dots,x_n$ and $I_n\in\mathbb{R}^{n\times n}$ as the identity matrix. For column vectors $x,y\in\mathbb{R}^n$, $\left<x,y\right>$ denotes the inner product, and $\lVert\cdot\rVert$ denotes the 2-norm. For a vector $x\in\mathbb{R}^n$, $[x]_k$ denotes the $k$-th element of $x$ ($k\in\{1,\dots,n\}$). For a matrix $X\in\mathbb{R}^{n\times n}$, $[X]_{ij}$ denotes the element in the $i$-th row and $j$-th column of $X$ ($i,j\in\{1,\dots,n\}$). A function is piecewise continuous if it is continuous except at finite points in its domain. For a differentiable function $f(x_1,\dots,x_n)$, denote $\nabla_i f$ as its gradient with respect to $x_i$.
\subsection{Convex analysis}
A set $C\subseteq\mathbb{R}^n$ is convex if $\lambda z_1+(1-\lambda)z_2\in C$ for any $z_1,z_2\in C$ and $0\leq\lambda\leq1$. A point $z$ is an interior point of $C$ if $\boldsymbol{B}(z,\bar{\varepsilon})\subseteq C$ for some $\bar{\varepsilon}>0$. For a closed convex set $C\in\mathbb{R}^n$, a projection map $\Pi_C:\mathbb{R}^n\to C$ is defined as $\Pi_C(x)=\arg\min_{y\in C}\lVert x-y\rVert$, and holds $\left<x-\Pi_C(x),\Pi_C(x)-y\right>\geq0$ for any $y\in C$. A function $f:\mathbb{R}^n\to\mathbb{R}$ is (strictly) convex if $f(\lambda x_1+(1-\lambda)x_2)(<)\leq \lambda f(x_1)+(1-\lambda)f(x_2)$ for any $x_1,x_2\in\mathbb{R}^n$ and $\lambda\in(0,1)$. For a convex function $f(x)$, $w(x)$ is a subgradient of $f$ at point $x$ if $f(y)\geq f(x)+\left<y-x,w(x)\right>$, $\forall\,y\in \mathbb{R}^n$. The set of all subgradients of convex function $f$ at $x$ is denoted by $\partial f(x)$, which is called the subdifferential of $f$ at $x$. For a convex function $f(x_1,\dots,x_n)$, denote $\partial_{i} f$ as the subdifferential of $f$ with respect to $x_i$. A function $f:\mathbb{R}^n\to\mathbb{R}$ is $\mu$-strongly convex ($\mu>0$) if $f(y)\geq f(x)+w(x)^T(y-x)+\frac{\mu}{2}\lVert y-x\rVert^2,\ \mathrm{for\ any}\ x,y\in\mathbb{R}^n,$
where $w(x)\in\partial f(x)$. Moreover, if $f(x)-\frac{\mu}{2}\lVert x\rVert^2$ is convex, then $f(x)$ is $\mu$-strongly convex.

\subsection{Bayesian game}
Consider a Bayesian game with a set of players $\mathcal{V}=\{1,\dots,n\}$, denoted by $G=(\mathcal{V},\{\mathcal{X}_i\}_{i=1}^n,\Theta,p(\cdot),$
$\{f_i(\cdot)\}_{i=1}^n)$. Player $i\in\mathcal{V}$ has its action set $\mathcal{X}_i\in\mathbb{R}^{m_i}$. The incomplete information of player $i$ is referred to the \textit{type}, that is, a random variable $\theta_i\in\Theta_i\subseteq\mathbb{R}$. Denote $\theta_{-i}\in\Theta_{-i}$ as the type vectors of all players except player $i$. The joint distribution density of types over $\Theta=\Theta_1\times\cdots\times\Theta_n$ is denoted by $p$, with the positive marginal density $p_i(\theta_i)=\int_{\Theta_{-i}}p(\theta_i,\theta_{-i})d\theta_{-i}>0,\ i\in \mathcal{V},\ \theta_i\in\Theta_i.$ Define the conditional probability density $p_i(\theta_{-i}|\theta_i)=p(\theta_i,\theta_{-i})/p_i(\theta_i)$, whose expectation is normalized, \textit{i.e.}, $\int_{\Theta_i} p(\theta_i|\theta_{-i})d\theta_i=1,\ i\in \mathcal{V}.$

Here, each player only knows its own type but not those of its rivals. As in Bayesian games \cite{harsanyi1967}, the joint probability distribution over type sets is public information. The cost function of player $i$ is defined as $f_i:\mathcal{X}_1\times\cdots\times\mathcal{X}_n\times\Theta\to\mathbb{R}$, depending on all players' actions and types. Each player $i$ adopts a strategy $\sigma_i$, which is a measurable function mapping from the type set $\Theta_i$ to the action set $\mathcal{X}_i$. Denote the set of player $i$'s possible strategies by $\Sigma_i$, consisting of all measurable functions mapping from $\Theta_i$ to $\mathcal{X}_i$. Define Hilbert spaces $\mathcal{H}_i$ consisting of functions $\beta:\mathbb{R}\to\mathbb{R}^{m_i}$ with the inner product $\left<\sigma_i,\sigma_i'\right>_{\mathcal{H}_i}=\int_{\theta_i\in\Theta_i} \left<\sigma_i,\sigma_i'\right>p_i(\theta_i)d\theta_i<\infty,\ \sigma_i,\sigma_i'\in\Sigma_i,\ i\in\mathcal{V}.$ Thus, the strategy set $\Sigma_i$ lies in the Hilbert space $\mathcal{H}_i$. If player $i$ adopts the strategy $\sigma_i$, its conditional expectation of the cost according to its conditional probability density is 
$
U_i(\sigma_i,\sigma_{-i},\theta_i)=\int_{\Theta_{-i}}f_i(\sigma_i(\theta_i),\sigma_{-i},\theta_i,\theta_{-i})p_i(\theta_{-i}|\theta_i)d\theta_i,
$
where $\sigma_{-i}$ is the profile of all players' strategies except for player $i$.

\subsection{Graph theory}
A digraph (or directed graph) $\mathcal{G}=(\mathcal{V},\mathcal{E})$ consists of a node set
$\mathcal{V}=\{1,\dots,n\}$ and an edge set $\mathcal{E}=\mathcal{V}\times\mathcal{V}$. Node $j$ is a neighbor of $i$ if $(j,i)\in\mathcal{E}$, and take $(i,i)\in\mathcal{E}$. A path in $\mathcal{G}$ from $i_1$ to $i_k$ is an alternating sequence $i_1e_1i_2\cdots i_{k-1}e_{k-1}i_k$ of nodes such that $e_j=(i_j,i_{j+1})\in\mathcal{E}$ for $j\in\{1,\dots,k-1\}$. The (weighted) associated adjacency matrix $A=([A]_{ij})\in\mathbb{R}^{n\times n}$ is composed of nonnegative adjacency elements $[A]_{ij}$, which is positive if and only if $(j,i)\in\mathcal{E}$. $\mathcal{G}$ is bipartite if $\mathcal{V}$ can be partitioned into two disjoint parts $\mathcal{V}_1$ and $\mathcal{V}_2$ such that $\mathcal{E}\subseteq \bigcup_{l=1}^2 (\mathcal{V}_l\times\mathcal{V}_{3-l})$. Digraph $\mathcal{G}$ is strongly connected if there is a path in $\mathcal{G}$ from $i$ to $j$ for any pair nodes $i,j\in\mathcal{G}$.

Consider a multi-agent network $\Xi$ consisting of two subnetworks $\Xi_1$ and $\Xi_2$, whose agents are $\mathcal{V}_1=\{v_1^1,\dots,v_1^{n_1}\}$ and $\mathcal{V}_2=\{v_2^1,\dots,v_2^{n_2}\}$, respectively.
$\Xi$ is described by a digraph, denoted as $\mathcal{G}=(\mathcal{V},\mathcal{E})$, where $\mathcal{V}=\mathcal{V}_1\cup\mathcal{V}_2$. $\mathcal{G}$ can be partitioned into four digraphs: $\mathcal{G}_l=(\mathcal{V}_l,\mathcal{E}_l)$ with $\mathcal{E}_l=\{(j,i)|\,i,j\in\mathcal{V}_l\}$, $l\in\{1,2\}$, and two bipartite graphs $\mathcal{G}_{l(3-l)}=(\mathcal{V},\mathcal{E}_{l(3-l)})$ with $\mathcal{E}_{l(3-l)}=\{(j,i)|\,j\in\mathcal{V}_l,\,i\in\mathcal{V}_{3-l}\}$, $l\in\{1,2\}$. The set of neighbors in subnetwork $\Xi_k$ ($k\in\{1,2\}$) for agent $v_l^i\in\mathcal{V}$ is denoted by $\mathcal{N}_k^{l,i}$.

\section{Distributed subnetwork zero-sum games}\label{sec_pf}

In this section, we formulate a distributed Bayesian Nash equilibrium seeking problem in subnetwork zero-sum games, and demonstrate the challenges in the study of the proposed problem.

\subsection{Problem formulation}

Consider a Bayesian game $G=(\{\Xi_l\}_{l=1}^2,\{\mathcal{X}_l\}_{l=1}^2,$
$\Theta_1\times\Theta_2,p(\cdot),\{f_l(\cdot)\}_{l=1}^2)$ between subnetworks $\Xi_1$ and $\Xi_2$. For $l\in\{1,2\}$, the action set, type set, joint probability density function, and cost function of $\Xi_l$ are denoted by $\mathcal{X}_l\subseteq\mathbb{R}^{m_l}$, $\Theta_l\subseteq\mathbb{R}$, $p(\theta_1,\theta_2)$, and $f_l(x_1,x_2,\theta_1,\theta_2)$, respectively. Denote the strategy sets by $\Sigma_l\subseteq\mathcal{H}_l$. Each agent $v_l^i$ in $\Xi_l$ has a cost function $f_{l,i}:\mathcal{X}_1\times\mathcal{X}_2\times\Theta_1\times\Theta_2\to\mathbb{R}$, which satisfies $f_l(x_1,x_2,\theta_1,\theta_2)=\frac{1}{n_l}\sum_{i=1}^{n_l}f_{l,i}(x_1,x_2,\theta_1,\theta_2).$ At time $t$, agents exchange information with neighbors in $\Xi_l$ via $\mathcal{G}_l(t)$, and obtain information about $\Xi_{3-l}$ via $\mathcal{G}_{(3-l)l}(t)$. For $l\in\{1,2\}$, each subnetwork aims to choose a strategy $\sigma_l\in\Sigma_l$ to minimize the following global expectation for every $\theta_l\in\Theta_l$.
\begin{equation}\label{eq_expcost}
U_l(\sigma_1,\sigma_2,\theta_l)=\int_{\Theta_{3-l}}f_l(\sigma_1(\theta_1),\sigma_2(\theta_2),\theta_1,\theta_2)p_l(\theta_{3-l}|\theta_l)d\theta_{3-l}. 
\end{equation}
The subnetworks are engaged in a zero-sum game, namely, for any $x_l\in\mathcal{X}_l$ and $\theta_l\in\Theta_l$, $f_1(x_1,x_2,\theta_1,\theta_2)+f_2(x_1,x_2,\theta_1,\theta_2)=0.$

\begin{remark}
Existing works have investigated Bayesian zero-sum games where all players share the same environment state type \cite{bhaskar2016,burr2021,lakshmivarahan1982} called nature-state. In our model, we consider that the cost functions are dependent on both players' implicit types, which was also widely discussed in \cite{harsanyi1967,li2018} and can turn to the nature-state case by taking a distribution with $\theta_1=\theta_2$.
\end{remark}

In Bayesian games, researchers are most concerned about the optimal strategies for players and the equilibria for the whole game. Thus, we introduce the best response strategy and the Bayesian Nash equilibrium\footnote{There are two different definitions of BNE. In \cite{guos2021,meirowitz2003}, a strategy pair is a BNE when each strategy in the pair is a best response at each type, while in \cite{athey2001,milgrom1985}, a BNE satisfies that each strategy in the pair is a best response at almost every type. In this paper, we use the latter definition.}, referring to \cite{harsanyi1967}.
\begin{definition} \label{def_bne} Considering the subnetwork zero-sum Bayesian game $G$,
\begin{enumerate}\renewcommand\labelenumi{{\rm(\alph{enumi})}}
\item for subnetwork $\Xi_l$, a strategy $\sigma_{l*}$ is a best response with respect to the adversarial subnetwork's strategy $\sigma_{3-l}\in\Sigma_{3-l}$ if 
\begin{equation*}
\sigma_{l*}(\theta_l)=\arg\min_{\sigma_l(\theta_l)\in\mathcal{X}_l}U_l(\sigma_1,\sigma_2,\theta_l),\ \mathrm{for\ a.e.\ } \theta_l\in\Theta_l, 
\end{equation*}
where the set of best responses is denoted by $BR_l(\sigma_{3-l})$;
\item a strategy pair $(\sigma_1^*,\sigma_2^*)$ is a Bayesian Nash equilibrium (BNE) of $G$ if 
\begin{equation*}\sigma_l^*\in BR_l(\sigma_{3-l}^*),\ \mathrm{for\ a.e.\ } \theta_l\in\Theta_l. 
\end{equation*}
\end{enumerate}
\end{definition}

In Definition \ref{def_bne}, a best response $\sigma_{l*}$ with respect to a given strategy $\sigma_{3-l}$ means that it is an optimal solution for $\Xi_l$ when $\Xi_{3-l}$ adopts $\sigma_{3-l}$. Moreover, reaching a BNE profile means that there is no strategy that a subnetwork can adopt to yield a lower cost, when the adversarial subnetwork keeps its strategy.

We make the following assumptions for $G$. 
\begin{assumption}\label{ass_game}For $l\in\{1,2\}$,
\begin{enumerate}\renewcommand\labelenumi{(\roman{enumi})}
\item $\mathcal{X}_l$ is nonempty, compact and convex;
\item $\Theta_l$ is compact. Without loss of generality, take $\Theta_l=[\underline{\theta}_l,\overline{\theta}_l]$;
\item $f_{l,i}$ is strictly convex in $x_l\in\mathcal{X}_l$, and $f_l$ is continuously differentiable and $\mu$-strongly convex in $x_l\in\mathcal{X}_l$;
\item for $j\in\{1,2\}$, $f_{l,i}$ is $L_{l,j}$-Lipschitz continuous in $x_j\in\mathcal{X}_j$ for each $x_{3-j}\in\mathcal{X}_{3-j}$, $\theta_1\in\Theta_1$, and $\theta_2\in\Theta_2$, and $f_{l,i}$ is $L_\theta$-Lipschitz continuous in $\theta_j\in\Theta_j$ for each $x_1\in\mathcal{X}_1$, $x_2\in\mathcal{X}_2$, and $\theta_{3-j}\in\Theta_{3-j}$;
\item $p(\theta_1,\theta_2)$ is $L_p$-Lipschitz continuous in $\theta_l\in\Theta_l$ for each $\theta_{3-l}\in\Theta_{3-l}$, and $p_l(\theta_l)$ is positive for every $\theta_l\in\Theta_l$;
\item the digraphs $\mathcal{G}_{l}(t)$ are $\mathcal{R}_0$-jointly strongly connected, \textit{i.e.}, $\bigcup_{r=t}^{t+\mathcal{R}_0} \mathcal{G}_{l}(r)$ is strongly connected for $t\geq0$. Moreover, each agent $v_l^i$ has at least a neighbor in $\Xi_{3-l}$ in a finite time interval, \textit{i.e.}, there exists an integer $\mathcal{S}_0>0$ that agent $v_l^i$'s neighbor set in $\mathcal{G}_{(3-l)l}$ satisfies $\bigcup_{r=t}^{t+\mathcal{S}_0-1}\mathcal{N}_{3-l}^{l,i}(r)\ne\emptyset$ for $t\geq0$.
\end{enumerate}
\end{assumption}

Assumption \ref{ass_game} was widely used in studying Bayesian games and distributed equilibrium seeking problems \cite{guos2021,guow2021,huangl2021,burr2021}. Assumption \ref{ass_game}(i) and (iii) ensure that the expectation $U_{l,i}(\sigma_1,\sigma_2,\theta_l)$ is well defined for each $\theta_l\in\Theta_l$ and $(\sigma_1,\sigma_2)\in\Sigma_1\times\Sigma_2$. Assumption \ref{ass_game}(v) guarantees the atomless property\footnote{A distribution is atomless if the probability of any given value is zero $P(X=x)=0$, and probability density functions of all atomless distributions over the interval $[a,b]$ are Lipschitz continuous.}, which is a common assumption in Bayesian games \cite{guos2021,huangl2021,milgrom1985}. Assumption \ref{ass_game}(vi) holds over a variety of network structures and ensures the connectivity of the network, which was also used in \cite{huangl2021,lou2015}.

In continuous-type Bayesian games, we cannot apply the fixed point theorem to ensure the existence of BNE as in the discrete cases \cite{milgrom1985}. Fortunately, the pioneers have provided the existence condition for the continuous-type BNE using infinite-dimensional variational inequalities \cite{guow2021,ui2016}, which is summarized as follows.
\begin{lemma}\label{lem_exibne}
Under Assumption \ref{ass_game}{\rm(i)-(iii)}, there exists a unique BNE of $G$.
\end{lemma}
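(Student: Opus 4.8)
The plan is to reduce the BNE problem to an infinite-dimensional variational inequality (VI) on the product strategy set $\Sigma:=\Sigma_1\times\Sigma_2\subseteq\mathcal{H}_1\times\mathcal{H}_2=:\mathcal{H}$ and then invoke the existence--uniqueness theory for strongly monotone VIs. Define the pseudogradient operator $F:\mathcal{H}\to\mathcal{H}$ by $F(\sigma)=\mathrm{col}\big(\nabla_1 U_1(\sigma_1,\sigma_2,\cdot),\,\nabla_2 U_2(\sigma_1,\sigma_2,\cdot)\big)$, where $\nabla_l$ denotes the gradient with respect to the action argument $x_l$. Since $f_l$ is convex in $x_l$ (Assumption \ref{ass_game}(iii)), $U_l(\sigma_1,\sigma_2,\theta_l)$ is convex in the action value $\sigma_l(\theta_l)$, so the pointwise minimization in Definition \ref{def_bne}(a) is equivalent to the first-order condition $\langle\nabla_l U_l(\sigma^*,\theta_l),\,y-\sigma_l^*(\theta_l)\rangle\geq0$ for all $y\in\mathcal{X}_l$ and almost every $\theta_l$. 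Integrating this against the marginal density $p_l(\theta_l)$ and summing over $l$ shows that $(\sigma_1^*,\sigma_2^*)$ is a BNE if and only if $\langle F(\sigma^*),\sigma-\sigma^*\rangle_{\mathcal{H}}\geq0$ for all $\sigma\in\Sigma$; the converse direction (from the aggregate inequality back to the a.e.\ pointwise condition) is recovered by a localization argument with indicator-type variations.

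I would then record the structural properties of the pair $(\Sigma,F)$. The set $\Sigma$ is nonempty, convex, closed, and bounded in $\mathcal{H}$, because each $\mathcal{X}_l$ is nonempty, convex, and compact (Assumption \ref{ass_game}(i)); and $F$ is well-defined and continuous on $\Sigma$ because $f_l$ is continuously differentiable on the bounded domain $\mathcal{X}_1\times\mathcal{X}_2\times\Theta_1\times\Theta_2$ (Assumption \ref{ass_game}(i),(iii)), which also justifies differentiating under the integral sign in \eqref{eq_expcost}. The crucial property is strong monotonicity. Using $p_l(\theta_{3-l}|\theta_l)\,p_l(\theta_l)=p(\theta_1,\theta_2)$, the pairing $\langle F(\sigma)-F(\sigma'),\sigma-\sigma'\rangle_{\mathcal{H}}$ can be rewritten as a single integral against the joint density $p$ of the saddle-operator pairing $\mathrm{col}(\nabla_1 f_1,-\nabla_2 f_1)$ of the single function $f_1$, where we use $\nabla_2 f_2=-\nabla_2 f_1$ from the zero-sum condition. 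Now $f_1$ is $\mu$-strongly convex in $x_1$ and, since $f_2=-f_1$ is $\mu$-strongly convex in $x_2$ (Assumption \ref{ass_game}(iii)), $\mu$-strongly concave in $x_2$. For such a strongly-convex--strongly-concave $f_1$, adding the two strong-convexity inequalities in $x_1$ and the two strong-concavity inequalities in $x_2$ (evaluated at $\sigma$ and $\sigma'$) makes the cross-coupling terms cancel and leaves a pointwise bound of $\mu(\lVert d_1\rVert^2+\lVert d_2\rVert^2)$ with $d_l=\sigma_l-\sigma_l'$; integrating against $p$ and recombining the marginals yields
\begin{equation*}
\langle F(\sigma)-F(\sigma'),\,\sigma-\sigma'\rangle_{\mathcal{H}}\geq\mu\lVert\sigma-\sigma'\rVert_{\mathcal{H}}^2.
\end{equation*}

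Finally, with $\Sigma$ closed and convex and $F$ continuous and $\mu$-strongly monotone on $\mathcal{H}$, the Stampacchia/Browder--Minty theorem for strongly monotone variational inequalities yields a unique solution $\sigma^*\in\Sigma$; strong monotonicity supplies both the coercivity needed for existence and the uniqueness, so the lack of compactness of $\Sigma$ is harmless. Translating $\sigma^*$ back through the equivalence of the first step gives the unique BNE of $G$.

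I expect the main obstacle to be the first step---the rigorous equivalence between the almost-everywhere pointwise best-response condition of Definition \ref{def_bne} and the single aggregated Hilbert-space VI. Recovering the pointwise inequality for almost every $\theta_l$ from the averaged inequality (rather than merely on average) requires a careful measurable-selection and localization argument, and one must separately confirm that $F$ maps $\mathcal{H}$ into $\mathcal{H}$ and is continuous there, i.e.\ that the gradient--integral interchange is legitimate on the non-compact infinite-dimensional set $\Sigma$. By contrast, once the VI is correctly set up, the strong-monotonicity estimate is a clean consequence of the zero-sum structure together with Assumption \ref{ass_game}(iii), and the existence--uniqueness conclusion is standard.
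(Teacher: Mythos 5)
Your proposal is correct and takes essentially the same route as the paper: the paper does not prove Lemma \ref{lem_exibne} itself but invokes the infinite-dimensional variational-inequality results of \cite{ui2016,guow2021}, which is exactly the reduction you carry out---recasting the a.e.\ pointwise best-response condition as a Hilbert-space VI on $\Sigma_1\times\Sigma_2$ and concluding via strong monotonicity of the pseudogradient. Your four-inequality cancellation correctly yields $\mu$-strong monotonicity of the saddle operator from the zero-sum structure and Assumption \ref{ass_game}(iii), so both existence (without compactness of $\Sigma_l$) and uniqueness follow as you state.
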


With the above guarantee of the existence, our goal is to compute the BNE of the proposed model, and we formulate our problem as follows.

\begin{problem}
Seek the BNE in the continuous-type Bayesian game $G=(\{\Xi_l\}_{l=1}^2,$
$\{\mathcal{X}_l\}_{l=1}^2,\Theta,p(\cdot),\{f_l(\cdot)\}_{l=1}^2)$ via distributed computation through time-varying graphs.
\end{problem}

\subsection{Challenges in seeking BNE}\label{subsec_bne}

In Bayesian games, the continuity of types brings difficulties in computation, since the strategies $\sigma_l$ lie in the infinite-dimensional space $\mathcal{H}_l$. Due to the continuous types in Bayesian games, the strategies are functions defined over the continuous type set $\Theta_l$ rather than finite-dimensional vectors in the discrete cases. As Riesz's Lemma shows \cite{rynne2007}, any infinite-dimensional normed space contains a sequence of unit vectors $\{x_n\}$ with $\lVert x_n-x_m\rVert>\alpha$ for any $0<\alpha<1$ and $n\ne m$. Thus, the closed set $\Sigma_l\subseteq\mathcal{H}_l$ is not compact, which present challenges for seeking BNE. There are only a few attempts to seek a BNE of continuous-type games, and usually these methods are difficult to implement or use heuristic approximations. For example, \cite{guow2021} considered the situation that the forms could be represented by finite parameters, which may not be practical since the forms are usually unavailable, while \cite{guos2021} utilized polynomial approximation to estimate a continuous BNE without an explicit estimation error. Moreover, \cite{huangl2021} adopted a heuristic approximation in a discrete-action Bayesian game, but their algorithm was NP-hard, which is not practical to be implemented in the continuous-action cases.

Thus, since directly seeking a BNE is hard, we introduce the following concept.
\begin{definition}\label{def_epsbne}
For any $\epsilon>0$, a strategy pair $(\widetilde{\sigma}^*_1,\widetilde{\sigma}^*_2)$ is an $\epsilon$-Bayesian Nash equilibrium ($\epsilon$-BNE) of $G$ if for any $\sigma_1\in\Sigma_1$ and $\sigma_2\in\Sigma_2$, 
\begin{equation*}
\begin{aligned}
&\int_{\Theta_1} U_1(\sigma_1,\widetilde{\sigma}^*_2,\theta_1)p_1(\theta_1)d\theta_1\geq\int_{\Theta_1}U_1(\widetilde{\sigma}_1^*,\widetilde{\sigma}^*_2,\theta_1)p_1(\theta_1)d\theta_1-\epsilon,\\
&\int_{\Theta_2} U_2(\widetilde{\sigma}^*_1,\sigma_2,\theta_2)p_2(\theta_2)d\theta_2\geq\int_{\Theta_2} U_2(\widetilde{\sigma}_1^*,\widetilde{\sigma}^*_2,\theta_2)p_2(\theta_2)d\theta_2-\epsilon.
\end{aligned} 
\end{equation*}
\end{definition}

Note that the information is distributed among agents in subnetwork zero-sum games. When seeking a BNE in a distributed manner, agents exchange their strategies with each other through the networks. However, it is difficult to convey complex strategies in the network under the limited communication capabilities. Specifically, if we approximate a BNE with discretization, the dimension of the strategies depends on the number of discrete points. Provided that we select more points to achieve higher accuracy, the dimension of the strategies can be explosively large. It is impractical to convey such complex strategies through networks. Thus, we also need to handle the exchange of information between agents and reduce the communication loads.


\section{Outline of BNE seeking algorithm}\label{sec_schedule}

In this section, we outline a distributed algorithm with two main steps for seeking BNE. Via efficient communication, the algorithm can obtain an approximate BNE with an explicit error bound.

Firstly, we approximate the BNE of the continuous-type model with discretization. For $l\in\{1,2\}$, we select $N_l$ points $\theta_l^1,\dots,\theta_l^{N_l}$ from $\Theta_l$, which satisfy $\underline{\theta}_l=\theta_l^0<\theta_l^1<\dots<\theta_l^{N_l}=\overline{\theta}_l$ and $\lim_{N_1,N_2\to\infty}\theta_l^i-\theta_l^{i-1}=0$ for every $i\in\{1,\dots,N_l\}$. By discretization, the algorithm generates a discretized model, whose strategies can be expressed with finite parameters. On this basis, we use the finite-dimensional strategies in the discretized model to approximate the infinite-dimensional strategies in the continuous model. We will show the detailed implementation of the discretization and its approximation result in Section \ref{sec_dis}.

Secondly, we adopt a compression method to reduce the communication loads. Although the strategies in the discretized model are restricted to finite-dimensional functions rather than the infinite-dimensional ones in the continuous model, discretization still results in high-dimensional strategies, and the network is unable to afford such a large amount of communication. To handle this, we use sparsification operators to reduce the dimension of strategies by compressing the data from $n$-dimensional vectors to $d$-dimensional ones ($d\leq n$), and thus reduce the communication loads. Define the compression ratio as $\rho=d/n$. Specifically, we design a sparsification rule to adapt to time-varying networks, and a new communication scheme to guide agents how to exchange information and estimate states of both subnetworks. The details on communication are presented in Section \ref{sec_compress}.

Thus, we summarize the above procedures and outline the according algorithm in the following, whose detailed version is presented in Section \ref{sec_alg}. 

\begin{algorithm}
\caption{Algorithm for seeking BNE of continuous-type Bayesian games}\label{alg_main}
\begin{algorithmic}
\STATE \textbf{Initialization}: Initialize the variables in the algorithm.
\STATE \textbf{Discretization}: For $l\in\{1,2\}$, select $N_l$ discrete points from $\Theta_l$.
\STATE \textbf{Communication}: For $v_l^i$, send compressed messages to neighbors. Estimate the states of both $\Xi_1$ and $\Xi_2$ respectively.
\STATE \textbf{Update}: Update each agent's strategy based on the estimations of both subnetworks' states.
\end{algorithmic}
\end{algorithm}


Based on Algorithm \ref{alg_main}, we provide the main result of this paper, whose proof is presented in Section \ref{sec_conv}.

\begin{theorem}\label{thm_conv}
Let $(\sigma_1^*,\sigma_2^*)$ be the BNE of game $G$ with continuous types. Under Assumption \ref{ass_game}, Algorithm \ref{alg_main} generates a convergent sequence, and its limit point $(\widetilde{\sigma}_1^*,\widetilde{\sigma}_2^*)$ satisfies that, for $l\in\{1,2\}$, 
\begin{equation*}\lVert \widetilde{\sigma}_l^*-\sigma_l^*\rVert^2_{\mathcal{H}_l}\leq \epsilon, \end{equation*}
where $\epsilon=O(\max_{l\in\{1,2\},i\in\{1,\dots,N_l\}}\{\theta_l^i-\theta_1^{i-1}\})$. Accordingly, as $N_1$ and $N_2$ tend to infinity, then 
\begin{equation*}
\lim_{N_1,N_2\to\infty}\lVert \widetilde{\sigma}_l^*-\sigma_l^*\rVert_{\mathcal{H}_l}=0. 
\end{equation*}
\end{theorem}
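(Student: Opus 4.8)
The plan is to split the total error through the intermediate object $(\widetilde{\sigma}_1^*, \widetilde{\sigma}_2^*)$ itself, which I will identify as the unique BNE of the \emph{discretized} game, and to bound two things: that the iterates of Algorithm \ref{alg_main} converge to this discretized BNE, and that the discretized BNE is $O(\max_{l,i}\{\theta_l^i - \theta_l^{i-1}\})$-close in $\mathcal{H}_l$ to the continuous BNE $(\sigma_1^*, \sigma_2^*)$.

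First I would show convergence of the algorithm to the discretized BNE. The discretization step replaces each $\sigma_l \in \mathcal{H}_l$ by a piecewise-constant function on the grid $\underline{\theta}_l = \theta_l^0 < \cdots < \theta_l^{N_l} = \overline{\theta}_l$ and the conditional-expectation integrals in \eqref{eq_expcost} by Riemann sums, turning the infinite-dimensional saddle problem into a finite-dimensional strongly-convex-strongly-concave game whose unique equilibrium is $(\widetilde{\sigma}_1^*, \widetilde{\sigma}_2^*)$. Two facts drive the convergence: (a) the compressed communication scheme produces unbiased estimates of both subnetworks' aggregate states, so each update is in expectation an exact distributed projected pseudo-gradient step; and (b) the within-subnetwork consensus error and the cross-subnetwork tracking error decay under the $\mathcal{R}_0$-joint-connectivity and $\mathcal{S}_0$-cross-link conditions of Assumption \ref{ass_game}(vi). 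A Lyapunov argument combining the $\mu$-strong monotonicity of the discretized game operator with the bounded variance introduced by sparsification, under a diminishing step size $\eta_t = O(1/\sqrt{t})$, then yields convergence to $(\widetilde{\sigma}_1^*, \widetilde{\sigma}_2^*)$ at the $O(\ln T / \sqrt{T})$ rate.

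Second, I would bound $\lVert \widetilde{\sigma}_l^* - \sigma_l^* \rVert_{\mathcal{H}_l}^2$. Writing $F$ for the continuous game's pseudo-gradient operator $(\nabla_1 U_1, \nabla_2 U_2)$ and $\widetilde{F}$ for its discretized counterpart, the zero-sum structure together with Assumption \ref{ass_game}(iii) makes $F$ strongly monotone with modulus $\mu$, and both equilibria are characterized by variational inequalities, $\langle F(\sigma^*), \sigma - \sigma^* \rangle \geq 0$ and $\langle \widetilde{F}(\widetilde{\sigma}^*), \sigma - \widetilde{\sigma}^* \rangle \geq 0$ over feasible directions (cf. Lemma \ref{lem_exibne}). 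Strong monotonicity gives
\begin{equation*}
\mu \sum_{l=1}^2 \lVert \widetilde{\sigma}_l^* - \sigma_l^* \rVert_{\mathcal{H}_l}^2 \leq \langle F(\widetilde{\sigma}^*) - F(\sigma^*),\, \widetilde{\sigma}^* - \sigma^* \rangle,
\end{equation*}
where $\sigma^* = (\sigma_1^*, \sigma_2^*)$ and $\widetilde{\sigma}^* = (\widetilde{\sigma}_1^*, \widetilde{\sigma}_2^*)$. Inserting $\pm \widetilde{F}(\widetilde{\sigma}^*)$ and using the two variational inequalities to discard the optimality terms leaves only the operator-mismatch term $\langle F(\widetilde{\sigma}^*) - \widetilde{F}(\widetilde{\sigma}^*),\, \widetilde{\sigma}^* - \sigma^* \rangle$. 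This mismatch is exactly the error of replacing the integrals over $\Theta_{3-l}$ by grid sums and of the piecewise-constant approximation; by the Lipschitz hypotheses on $f_{l,i}$ and $p$ in $\theta$ (Assumption \ref{ass_game}(iv)-(v)) each such error is $O(\max_{l,i}\{\theta_l^i - \theta_l^{i-1}\})$. Bounding the pairing by Cauchy-Schwarz with the compactness of $\mathcal{X}_l$ (Assumption \ref{ass_game}(i)) then gives $\sum_l \lVert \widetilde{\sigma}_l^* - \sigma_l^* \rVert_{\mathcal{H}_l}^2 \leq \epsilon$ with $\epsilon = O(\max_{l,i}\{\theta_l^i - \theta_l^{i-1}\})$. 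Since the grid is chosen so that $\theta_l^i - \theta_l^{i-1} \to 0$ uniformly as $N_1, N_2 \to \infty$, this also yields $\lVert \widetilde{\sigma}_l^* - \sigma_l^* \rVert_{\mathcal{H}_l} \to 0$.

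The step I expect to be hardest is the discretization-error bound: one must verify that the piecewise-constant strategy space approximates the continuous best responses so that $\widetilde{F}$ differs from $F$ only by a quadrature-type error linear in the grid spacing, and that the strong-monotonicity modulus $\mu$ is not degraded under discretization, so that converting the operator gap back into a squared $\mathcal{H}_l$-distance preserves the explicit, grid-spacing-linear rate. Controlling the interplay between the piecewise-constant projection and the conditional density $p_l(\theta_{3-l}\mid\theta_l)$ in \eqref{eq_expcost} is the delicate part.
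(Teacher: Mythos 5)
Your overall architecture is the same as the paper's (iterates $\to$ DBNE of the discretized game $\widetilde{G}$, then DBNE $\to$ BNE with a grid-spacing bound), but the second stage, which you yourself flag as the hardest step, has two genuine gaps as you have set it up. First, the variational inequality for the discretized equilibrium, $\langle \widetilde{F}(\widetilde{\sigma}^*), \widetilde{\sigma}-\widetilde{\sigma}^*\rangle \geq 0$, holds only for comparators $\widetilde{\sigma}$ in the piecewise-constant set $\widetilde{\Sigma}_1\times\widetilde{\Sigma}_2$, whereas your decomposition pairs it with the direction $\sigma^*-\widetilde{\sigma}^*$, and $\sigma^*\notin\widetilde{\Sigma}_1\times\widetilde{\Sigma}_2$ in general. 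Repairing this requires $\lVert \sigma_l^*-\Pi_{\widetilde{\Sigma}_l}\sigma_l^*\rVert_{\mathcal{H}_l}=O(\max_i\{\theta_l^i-\theta_l^{i-1}\})$, i.e., quantitative (Lipschitz-type) regularity of the unknown equilibrium $\sigma^*$, which is not available: Lemma \ref{prop_br} only assumes piecewise continuity of best responses and yields no rate. Second, your quadrature bound on the operator mismatch $F(\widetilde{\sigma}^*)-\widetilde{F}(\widetilde{\sigma}^*)$ needs the partial gradients $\nabla_{x_l} f_l$ to be Lipschitz in $\theta$, but Assumption \ref{ass_game}(iv) gives Lipschitz continuity of the \emph{values} $f_{l,i}$ in $\theta$, not of their gradients, so the claimed $O(\max_{l,i}\{\theta_l^i-\theta_l^{i-1}\})$ mismatch does not follow from the stated hypotheses. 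The paper's proof of Lemma \ref{thm_bne} avoids both obstructions by arguing at the level of cost values rather than operators: it first shows the DBNE is an $\epsilon_1$-BNE using quadrature estimates on $f_l$ and $p$ only (inequalities \eqref{eq_transinprf}--\eqref{eq_pfap}), then uses the zero-sum chain $EU(\sigma_1^*,\sigma_2^*)\geq EU(\sigma_1^*,\widetilde{\sigma}_2^*)\geq EU(\widetilde{\sigma}_1^*,\widetilde{\sigma}_2^*)-\epsilon_1\geq EU(\widetilde{\sigma}_1^*,\sigma_2^*)-2\epsilon_1\geq EU(\sigma_1^*,\sigma_2^*)-2\epsilon_1$ to bound the value gap, and finally converts that gap into $\lVert\widetilde{\sigma}_l^*-\sigma_l^*\rVert^2_{\mathcal{H}_l}\leq 4\epsilon_1/\mu$ via $\mu$-strong convexity together with the first-order optimality of $\sigma_l^*$. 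This is exactly where the zero-sum structure substitutes for the compactness (and extra regularity) your VI route implicitly needs; as the paper remarks after Lemma \ref{thm_bne}, part (b) fails for general games.

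On the algorithmic stage there are also mismatches with the actual scheme, two of which would break a literal implementation of your plan. The sparsification here is deterministic (a round-robin entry schedule synchronized with the connectivity horizon $\mathcal{R}_0$), and Lemma \ref{prop_compress} provides \emph{deterministic} estimation-error bounds that are then made summable against $\alpha(q)$; there is no unbiasedness-in-expectation or variance argument, so a stochastic-approximation Lyapunov analysis with "bounded compression variance" is the wrong frame. More consequentially, your "projected pseudo-gradient step" is unavailable: the surplus-based update \eqref{eq_update_rule1} depends on the column-sum (mass-conservation) property of $\bar{M}_{l,k}$ that makes the average state evolve by a pure subgradient step, and projection would destroy it; the paper instead enforces feasibility through the exact penalty $H_l$ (Proposition \ref{prop_penalty}) and works with subgradients of $\widetilde{U}_{l,i}+H_l$. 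Finally, note that $\alpha(t)=1/\sqrt{t}$ is not square-summable, so it supports the $O(\ln T/\sqrt{T})$ rate statement but not convergence itself; the paper separates the two, proving convergence under $\sum_t\alpha^2(t)<\infty$ and the rate separately under $1/\sqrt{t}$.
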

Theorem \ref{thm_conv} provides the convergence of Algorithm \ref{alg_main} to an approximate BNE with an explicit error bound between the generated result and the BNE of $G$. 

\begin{remark}
Note that for any atomless distributions, the convergence of a function sequence in $\mathcal{H}_l$ is equivalent to its pointwise convergence for almost every $\theta_l\in\Theta_l$, \textit{i.e.}, $\lim_{n\to\infty}\lVert F_n-F\rVert_{\mathcal{H}_l}= 0$ is equivalent to $\lim_{n\to\infty}\lVert F_n(\theta_l)-F(\theta_l)\rVert=0$ for almost every $\theta_l\in\Theta_l$, where $\{F_n\}$ is a sequence in $\mathcal{H}_l$, and $F\in\mathcal{H}_l$. 
\end{remark}

Theorem \ref{thm_conv} shows that the sequence $\{\widetilde{\sigma}_l^*\}$ converges to the equilibrium $\sigma_l^*$. Different from existing heuristic approximations \cite{guos2021,guow2021}, we provide both the convergence and an explicit error bound by taking full advantage of the zero-sum condition.

In Algorithm \ref{alg_main}, we should set up the number of discrete points $N_l$ and the compression ratio $\rho$ of the sparsification operator according to the actual situation. To achieve high accuracy of the approximation, we need to take more discrete points. However, this brings an excessive communication burden. Although the compression tool can reduce the burden, too small compression ratios will significantly slow down the convergence. Therefore, we need to take appropriate $N_l$ and $\rho$ to reach a trade-off between the accuracy of the approximation and the communication burdens.

\section{Technical details of Algorithm \ref{alg_main}}\label{sec_algana}
Note that there are two main steps in the algorithm, that is, the discretization step and the communication step. In the following, we discuss the two steps in details. 

\subsection{Discretization step}\label{sec_dis}
In this subsection, we provide the discretization step in Algorithm \ref{alg_main}, with its effectiveness in approximating best responses and BNE of the continuous-type model $G$.

Denote the discrete type set of $\Xi_l$ by $\widetilde{\Theta}_l=\{\theta_l^1,\dots,\theta_l^{N_l}\}$. If we regard all types lying in the interval $(\theta_l^{i-1},\theta_l^i]$ as $\theta_l^i$, then the discrete types follow the discrete distribution $$\widetilde{P}(\theta_1^i,\theta_2^j)=\int_{\theta_1^{i-1}}^{\theta_1^i}\int_{\theta_2^{j-1}}^{\theta_2^j}p(\theta_1,\theta_2)d\theta_1d\theta_2,\theta_1^i\in\widetilde{\Theta}_1,\ \theta_2^j\in\widetilde{\Theta}_2.$$ Correspondingly, the marginal distribution $\widetilde{P}_l(\theta_l)=\sum_{\theta_{3-l}\in\widetilde{\Theta}_{3-l}}\widetilde{P}_l(\theta_1,\theta_2)$ and the conditional distribution $\widetilde{P}_l(\theta_{3-l}|\theta_l)=\widetilde{P}(\theta_1,\theta_2)/\widetilde{P}_l(\theta_l)$.

To be specific, here we select the discrete points $\theta_l^i\in\Theta_l$, which satisfy 
\begin{equation}\label{eq_dispoints}
\widetilde{P}_l(\theta_l^i)=1/N_l,\ i=1,\dots,N_l. 
\end{equation}
According to Assumption \ref{ass_game}(v) that $p_l(\theta_l)>0$ for every $\theta_l\in\Theta_l$, as $N_1$ and $N_2$ tends to infinity, $\epsilon$ tends to 0. Since we use $\theta_l^i$ to represent $(\theta_l^{i-1},\theta_l^i]$, we choose the length of such interval as small as possible, which can effectively reduce the error. Moreover, our choice of discrete points is friendly to distributed algorithms. If the marginal distributions are not uniform, agents need to additionally share the marginal distributions with the rivals, and do more computation when updating the strategies.

On this basis, we formulate a discretized model, denoted by $\widetilde{G}=(\{\Xi_l\}_{l=1}^2,$
$\{\mathcal{X}_l\}_{l=1}^2,\widetilde{\Theta}_1\times\widetilde{\Theta}_2,\widetilde{P}(\cdot),\{f_l(\cdot)\}_{l=1}^2)$. In this model, strategies are restricted to $N_l m_l$-dimensional vectors. Denote the strategy sets in $\widetilde{G}$ by $\widetilde{\Sigma}_l$ for $\Xi_l$. For any $\widetilde{\sigma}_1\in\widetilde{\Sigma}_1$ and $\widetilde{\sigma}_2\in\widetilde{\Sigma}_2$, the expectation of the cost function $f_l$ is 
\begin{equation}\label{eq_expcostdis}
\widetilde{U}_l(\widetilde{\sigma}_{1},\widetilde{\sigma}_{2},\theta_l)=\sum_{\theta_{3-l}\in\widetilde{\Theta}_{3-l}}f_l(\widetilde{\sigma}_1(\theta_1),\widetilde{\sigma}_2(\theta_2),\theta_1,\theta_2)\widetilde{P}_l(\theta_{3-l}|\theta_l),\ \theta_l\in\widetilde{\Theta}_l. 
\end{equation}
Correspondingly, denote the expectation of agent $v_l^i$ in $\widetilde{G}$ by $\widetilde{U}_{l,i}(\sigma_1,\sigma_2,\theta_l)$. As Definition \ref{def_bne}, we define the following best response and BNE in $\widetilde{G}$.
\begin{definition}\label{def_disbne}
For the subnetwork $\Xi_l$, a strategy $\widetilde{\sigma}_{l*}$ is a best response with respect to the rivals' strategy $\sigma_{3-l}\in\widetilde{\Sigma}_{3-l}$ in $\widetilde{G}$ if for any $\theta_l\in\widetilde{\Theta}_l$, 
\begin{equation}\label{eq_bne_dis}
\widetilde{\sigma}_{l*}(\theta_l)=\arg\min_{\sigma_l(\theta_l)\in\mathcal{X}_l}\widetilde{U}_l(\sigma_1,\sigma_2,\theta_l). 
\end{equation}
Denote the set of best responses in $\widetilde{G}$ by $BR_l^{N_l}(\sigma_{3-l})$. Moreover, a strategy pair $(\widetilde{\sigma}_1^*,\widetilde{\sigma}_2^*)\in\widetilde{\Sigma}_1\times\widetilde{\Sigma}_2$ is a BNE of the discretized model $\widetilde{G}$, or a DBNE$(N_1,N_2)$ if 
\begin{equation*}\widetilde{\sigma}_l^*\in BR_l^{N_l}(\widetilde{\sigma}^*_{3-l})\ \mathrm{for\ a.e.\ } \theta_l\in\widetilde{\Theta}_l. 
\end{equation*}
\end{definition}

The existence of DBNE can be guaranteed by variational inequalities \cite{guow2021} or Browner fixed point theorem \cite{milgrom1985}. We summarize the result as follows.
\begin{lemma}
Under Assumption \ref{ass_game}{\rm(i)-(iii)}, there exists a unique DBNE$(N_1,N_2)$ of the discretized model $\widetilde{G}$.
\end{lemma}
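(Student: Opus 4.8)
The plan is to reduce the statement to the existence and uniqueness of a solution of a finite-dimensional variational inequality, mirroring the infinite-dimensional argument behind Lemma~\ref{lem_exibne}. First I would identify the discretized strategy space: since a strategy $\widetilde\sigma_l\in\widetilde\Sigma_l$ is completely determined by its values $(\widetilde\sigma_l(\theta_l^1),\dots,\widetilde\sigma_l(\theta_l^{N_l}))$ at the $N_l$ discrete types, I identify $\widetilde\Sigma_l$ with $\mathcal{X}_l^{N_l}\subseteq\mathbb{R}^{N_l m_l}$, which is nonempty, compact, and convex by Assumption~\ref{ass_game}(i). I then introduce the marginal-weighted aggregate objectives
\[
J_l(\widetilde\sigma_1,\widetilde\sigma_2)=\sum_{\theta_l\in\widetilde\Theta_l}\widetilde P_l(\theta_l)\,\widetilde U_l(\widetilde\sigma_1,\widetilde\sigma_2,\theta_l)=\sum_{\theta_1^i\in\widetilde\Theta_1}\sum_{\theta_2^j\in\widetilde\Theta_2}\widetilde P(\theta_1^i,\theta_2^j)\,f_l\big(\widetilde\sigma_1(\theta_1^i),\widetilde\sigma_2(\theta_2^j),\theta_1^i,\theta_2^j\big).
\]
Because the variable $\widetilde\sigma_l(\theta_l^i)$ enters $\widetilde U_l(\cdot,\cdot,\theta_l^i)$ only, and because $\widetilde P_l(\theta_l^i)=1/N_l>0$ by \eqref{eq_dispoints}, the type-by-type minimization in \eqref{eq_bne_dis} separates across the index $i$ and is equivalent to minimizing $J_l(\cdot,\widetilde\sigma_{3-l})$ over the whole block $\mathcal{X}_l^{N_l}$. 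Hence $(\widetilde\sigma_1^*,\widetilde\sigma_2^*)$ is a DBNE$(N_1,N_2)$ if and only if each $\widetilde\sigma_l^*$ minimizes $J_l(\cdot,\widetilde\sigma_{3-l}^*)$ over $\mathcal{X}_l^{N_l}$.

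Next I would exploit the zero-sum structure. Since $f_1+f_2=0$ pointwise, the weighted sums satisfy $J_1+J_2=0$, so the two subnetworks are engaged in a two-player zero-sum game with a single smooth payoff $J:=J_1=-J_2$ that is convex in $\widetilde\sigma_1$ and concave in $\widetilde\sigma_2$ (each $f_l$ is $\mu$-strongly convex in $x_l$ and the weights $\widetilde P(\theta_1^i,\theta_2^j)$ are nonnegative). I would then encode the coupled best-response conditions as one variational inequality on $\Omega:=\mathcal{X}_1^{N_1}\times\mathcal{X}_2^{N_2}$: find $\widetilde\sigma^*\in\Omega$ such that $\langle F(\widetilde\sigma^*),\widetilde\sigma-\widetilde\sigma^*\rangle\ge 0$ for all $\widetilde\sigma\in\Omega$, where the pseudo-gradient is $F(\widetilde\sigma)=col\big(\nabla_{\widetilde\sigma_1}J(\widetilde\sigma),-\nabla_{\widetilde\sigma_2}J(\widetilde\sigma)\big)$. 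Convexity of $J$ in its first block and concavity in its second make this VI equivalent to the DBNE characterization obtained above. For existence I would note that $F$ is continuous since $f_l$ is continuously differentiable (Assumption~\ref{ass_game}(iii)) and $\Omega$ is nonempty, compact, and convex; the Hartman--Stampacchia theorem (equivalently the Brouwer-fixed-point argument of \cite{milgrom1985} or the VI argument of \cite{guow2021}) then yields a solution, which is a DBNE.

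For uniqueness I would establish that $F$ is strongly monotone. Blockwise separability together with $\widetilde P_l(\theta_l^i)>0$ shows that $J$ is $\underline\mu_1$-strongly convex in $\widetilde\sigma_1$ and $\underline\mu_2$-strongly concave in $\widetilde\sigma_2$, with $\underline\mu_l=\mu\min_{i}\widetilde P_l(\theta_l^i)>0$. The saddle operator of such a strongly-convex/strongly-concave payoff satisfies $\langle F(u)-F(v),u-v\rangle\ge c\lVert u-v\rVert^2$ with $c=\min\{\underline\mu_1,\underline\mu_2\}$, so two distinct VI solutions are impossible; combined with existence this gives the unique DBNE$(N_1,N_2)$.

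The routine parts are the continuity of $F$ and the compactness of $\Omega$; the crux is the strong-monotonicity estimate, for which I would add the convexity and concavity inequalities of $J$ evaluated at the two points $u=(u_1,u_2)$ and $v=(v_1,v_2)$ so that the cross terms cancel and a quadratic lower bound remains. This is precisely where the zero-sum structure (a single payoff $J$) is essential, since it forces the off-diagonal coupling of $F$ to be skew and hence to vanish in the symmetrization, so it does not spoil monotonicity. I would also take care to justify rigorously that the best-response minimization decouples across discrete types, so that the condition ``for a.e. $\theta_l\in\widetilde\Theta_l$'' reduces to all $N_l$ atoms, each carrying positive mass $\widetilde P_l(\theta_l^i)>0$; this is what legitimizes passing from the per-type conditions \eqref{eq_bne_dis} to the single variational inequality.
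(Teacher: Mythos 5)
Your proof is correct and follows essentially the route the paper indicates: the paper gives no detailed proof of this lemma, merely citing the variational-inequality argument of \cite{guow2021} and the fixed-point argument of \cite{milgrom1985}, and your finite-dimensional VI formulation --- identifying $\widetilde\Sigma_l$ with the compact convex set $\mathcal{X}_l^{N_l}$, decoupling the per-type minimizations via the positive atom masses $\widetilde P_l(\theta_l^i)=1/N_l$, obtaining existence from Hartman--Stampacchia on the compact set, and uniqueness from strong monotonicity of the saddle operator inherited from the $\mu$-strong convexity in Assumption \ref{ass_game}(iii) --- is a complete and faithful instantiation of that cited approach. Your handling of the reduction from the a.e.\ condition to all $N_l$ atoms and of the skew cancellation of the cross-coupling in the monotonicity estimate is sound.
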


To approximate the strategies in $G$ with the strategies in $\widetilde{G}$, we extend the domains of the strategies from $\widetilde{\Theta}_l$ to the continuous type set $\Theta_l$. For $l\in\{1,2\}$, define the strategies in $\widetilde{G}$ at any type $\theta\in(\theta_l^{i-1},\theta_l^i]$ as  
\begin{equation}\label{eq_extension}
\widetilde{\sigma}_l(\theta)=\widetilde{\sigma}_l(\theta_l^i). 
\end{equation}
Thus, the strategies in $\widetilde{G}$ can make a response to any type in $\Theta_l$. 

%

Then we estimate the best response of $\Xi_l$ in $G$. In fact, a best response needs to respond to any strategies in $\Sigma_l$, while Definition \ref{def_disbne} only considers the responses to strategies in $\widetilde{\Sigma}_l\subseteq\Sigma_l$. Since we adopt the discrete distribution $\widetilde{P}$ in computing $\widetilde{U}_l$ in Definition \ref{def_disbne} and we cannot compute $\widetilde{U}_l$ for any feasible strategy outside $\widetilde{\Sigma}_l$, we modify the best response as follows.

\begin{definition}\label{def_disbr}
For the subnetwork $\Xi_l$, a strategy $\widetilde{\sigma}_{l*}$ is a best response with respect to the rivals' strategy $\sigma_{3-l}\in\Sigma_{3-l}$ in $\widetilde{G}$ if for any $\theta_l^i\in\widetilde{\Theta}_l$, 
\begin{equation*}
\widetilde{\sigma}_{l*}(\theta_l^i)=\arg\min_{\sigma_l(\theta_l^i)\in\mathcal{X}_l}\int_{\underline{\theta}_{3-l}}^{\overline{\theta}_{3-l}}f_l(\sigma_1(\theta_1),\sigma_2(\theta_2),\theta_1,\theta_2)\big|_{\theta_l=\theta_l^i}\frac{\int_{\theta_l^{i-1}}^{\theta_l^i}p(\theta_1,\theta_2)}{\widetilde{P}_l(\theta_l^i)}d\theta_1d\theta_2. 
\end{equation*}
\end{definition}

The following result shows the relation between the best responses in the discretized model and in the continuous model, whose proof can be found in Apendix \ref{pf_br}.
\begin{lemma}\label{prop_br}
With $\sigma_{3-l}\in\Sigma_{3-l}$, if all the best responses in $BR_l(\sigma_{3-l})$ of $G$ are piecewise continuous, then the best responses in $BR_l^{N_l}(\sigma_{3-l})$ of $\widetilde{G}$ are almost surely the best responses of $G$, as $N_l$ tends to infinity. Specifically, for any $\widetilde{\sigma}^{N_l}_{l*}\in BR_l^{N_l}(\sigma_{3-l})$, there exists $\sigma_{l*}\in BR_l(\sigma_{3-l})$ such that 
\begin{equation*}
\lim_{N_l\to\infty}\widetilde{\sigma}^{N_l}_{l*}(\theta_l)=\sigma_{l*}(\theta_l), \ \mathrm{for\ a.e.\ } \theta_l\in\Theta_l. 
\end{equation*}
\end{lemma}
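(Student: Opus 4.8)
The plan is to fix a type $\theta_l\in\Theta_l$ and to compare, \emph{as functions of the action variable} $x\in\mathcal{X}_l$, the objective minimized by the discretized best response of Definition \ref{def_disbr} with the one minimized by the continuous best response of Definition \ref{def_bne}. For $\theta_l\in(\theta_l^{i-1},\theta_l^i]$ set
\[
g_i(x)=\int_{\Theta_{3-l}} f_l(x,\sigma_{3-l}(\theta_{3-l}),\theta_l^i,\theta_{3-l})\,\widetilde{p}_i(\theta_{3-l})\,d\theta_{3-l},\qquad \widetilde{p}_i(\theta_{3-l})=\frac{\int_{\theta_l^{i-1}}^{\theta_l^i} p(\theta_l',\theta_{3-l})\,d\theta_l'}{\widetilde{P}_l(\theta_l^i)},
\]
\[
h_{\theta_l}(x)=\int_{\Theta_{3-l}} f_l(x,\sigma_{3-l}(\theta_{3-l}),\theta_l,\theta_{3-l})\,p_l(\theta_{3-l}\mid\theta_l)\,d\theta_{3-l},
\]
so that $\widetilde{\sigma}^{N_l}_{l*}(\theta_l^i)=\arg\min_{x\in\mathcal{X}_l} g_i(x)$ and $\sigma_{l*}(\theta_l)=\arg\min_{x\in\mathcal{X}_l} h_{\theta_l}(x)$. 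Since $\widetilde{p}_i$ and $p_l(\cdot\mid\theta_l)$ are nonnegative and integrate to one, Assumption \ref{ass_game}(iii) makes both $g_i$ and $h_{\theta_l}$ $\mu$-strongly convex in $x$, so each admits a unique minimizer on the compact convex set $\mathcal{X}_l$. I would then reduce the lemma to the single estimate $\sup_{x\in\mathcal{X}_l}|g_i(x)-h_{\theta_l}(x)|\to0$ as $N_l\to\infty$: the standard strong-convexity bound (if $\|g_i-h_{\theta_l}\|_\infty\le\delta_i$, then $\frac{\mu}{2}\|\arg\min g_i-\arg\min h_{\theta_l}\|^2\le2\delta_i$) promotes uniform objective convergence to convergence of the minimizers.

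The crux is the convergence of the discretized conditional density $\widetilde{p}_i$ to $p_l(\cdot\mid\theta_l)$. Writing $\Delta_i=\theta_l^i-\theta_l^{i-1}$ and using $\widetilde{P}_l(\theta_l^i)=\int_{\theta_l^{i-1}}^{\theta_l^i} p_l(\theta_l')\,d\theta_l'$, I would invoke the $L_p$-Lipschitz continuity of $p$ in $\theta_l$ from Assumption \ref{ass_game}(v) to obtain, uniformly in $\theta_{3-l}\in\Theta_{3-l}$,
\[
\Big|\tfrac{1}{\Delta_i}\!\int_{\theta_l^{i-1}}^{\theta_l^i}\! p(\theta_l',\theta_{3-l})\,d\theta_l'-p(\theta_l,\theta_{3-l})\Big|\le L_p\Delta_i,
\]
and the analogous bound for the marginal in the denominator. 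Because $p_l(\theta_l)>0$ (Assumption \ref{ass_game}(v)), the denominator $\frac{1}{\Delta_i}\widetilde{P}_l(\theta_l^i)$ stays bounded away from zero once $\Delta_i$ is small, so these two estimates combine — as a ratio of convergent quantities with nonvanishing limit denominator — to yield $\sup_{\theta_{3-l}\in\Theta_{3-l}}|\widetilde{p}_i(\theta_{3-l})-p_l(\theta_{3-l}\mid\theta_l)|\to0$. Since $\Theta_{3-l}$ is a bounded interval (Assumption \ref{ass_game}(ii)), this uniform convergence gives $\int_{\Theta_{3-l}}|\widetilde{p}_i-p_l(\cdot\mid\theta_l)|\,d\theta_{3-l}\to0$. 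This density step is the main obstacle, since it requires controlling a ratio of shrinking integrals uniformly in the rival's type.

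To finish, I would split $g_i-h_{\theta_l}$ into a term reflecting the shift of the $\theta_l$-argument of $f_l$ from $\theta_l^i$ to $\theta_l$ — bounded by $L_\theta\Delta_i$ uniformly in $x$ via the $L_\theta$-Lipschitz continuity of $f_l$ in $\theta_l$ (Assumption \ref{ass_game}(iv)) together with $\int\widetilde{p}_i=1$ — and a term reflecting the density difference, bounded by $\big(\sup|f_l|\big)\int|\widetilde{p}_i-p_l(\cdot\mid\theta_l)|$ using the boundedness of $f_l$ on the compact set $\mathcal{X}_1\times\mathcal{X}_2\times\Theta_1\times\Theta_2$. Both bounds are uniform in $x$ and vanish as $N_l\to\infty$, since $\Delta_i\to0$ as the mesh shrinks, giving the required uniform objective convergence and therefore $\widetilde{\sigma}^{N_l}_{l*}(\theta_l^i)\to\sigma_{l*}(\theta_l)$. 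By the extension \eqref{eq_extension}, which sets $\widetilde{\sigma}^{N_l}_{l*}(\theta_l)=\widetilde{\sigma}^{N_l}_{l*}(\theta_l^i)$ on $(\theta_l^{i-1},\theta_l^i]$, this is precisely the claimed pointwise limit. Finally, the piecewise-continuity hypothesis ensures that the (pointwise unique) best response $\sigma_{l*}$ is continuous off a finite set and is thus a genuine measurable member of $BR_l(\sigma_{3-l})$, so the convergence holds for all but finitely many $\theta_l$, i.e.\ for a.e.\ $\theta_l\in\Theta_l$.
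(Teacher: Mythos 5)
Your proof is correct, but it takes a genuinely different route from the paper's. The paper argues in two stages: it first applies L'Hospital's rule to the ratio $\int_{\theta_l^{i-1}}^{\theta_l^i}p(\theta_1,\theta_2)\,d\theta_l\big/\int_{\theta_l^{i-1}}^{\theta_l^i}p_l(\theta_l)\,d\theta_l$ to conclude that the discretized conditional density tends to $p_l(\theta_{3-l}\mid\theta_l^i)$, and from this it \emph{asserts} convergence of the discretized best responses at the grid points; it then uses the piecewise-continuity hypothesis, via a uniform-continuity estimate of the form $\int_{\Theta_l}\lVert\widetilde{\sigma}^{N_l}_{l*}(\theta_l)-\sigma_{l*}(\theta_l)\rVert^2\,d\theta_l\le\varepsilon^2(\overline{\theta}_l-\underline{\theta}_l)$, to propagate convergence from the grid to almost every $\theta_l\in\Theta_l$. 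You instead fix $\theta_l$, prove the uniform-in-$x$ estimate $\sup_{x\in\mathcal{X}_l}|g_i(x)-h_{\theta_l}(x)|=O(\Delta_i)$ (with $\Delta_i=\theta_l^i-\theta_l^{i-1}$ and explicit constants drawn from Assumptions \ref{ass_game}(iv)--(v)), and convert objective closeness into minimizer closeness through the $\mu$-strong convexity of Assumption \ref{ass_game}(iii). Your route buys two things the paper leaves implicit: a rigorous stability argument for the argmin --- the paper's jump from density convergence to best-response convergence is exactly the step your strong-convexity inequality supplies --- and an explicit $O(\Delta_i)$ rate, consonant with the error bound later exploited in Lemma \ref{thm_bne}. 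Moreover, because you compare against $h_{\theta_l}$ at the fixed $\theta_l$ itself, absorbing the grid shift into the $L_\theta\Delta_i$ term, the extension \eqref{eq_extension} immediately yields the pointwise limit off a null set, and the piecewise-continuity hypothesis is demoted to a bookkeeping role (certifying that the pointwise argmin is a.e.\ equal to a measurable member of $BR_l(\sigma_{3-l})$) rather than serving as the engine of the a.e.\ extension as it does in the paper. Two cosmetic points: the denominator bound requires no smallness of $\Delta_i$, since $\tfrac{1}{\Delta_i}\widetilde{P}_l(\theta_l^i)$ is an average of $p_l\ge\min_{\Theta_l}p_l>0$; and your ratio estimate tacitly uses an upper bound on $p$, which is available because $p$ is continuous on the compact set $\Theta_1\times\Theta_2$.
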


Proposition \ref{prop_br} shows that each subnetwork can use the discretized model to approximate its best response. On this basis, each subnetwork believes that the best response of $\widetilde{G}$ is a near-optimal strategy of $G$ as the response to any rival's strategy. With such a belief, both subnetworks adopt the best responses of $\widetilde{G}$, and thus the best responses form a DBNE. The next result gives a relation between the derived DBNE of $\widetilde{G}$ and the BNE of $G$, whose proof can be found in Apendix \ref{pf_bne}.

\begin{lemma}\label{thm_bne}
Let $(\widetilde{\sigma}_1^*,\widetilde{\sigma}_2^*)$ be a DBNE$(N_1,N_2)$ obtained from $\widetilde{G}$, and $(\sigma_1^*,\sigma_2^*)$ be the BNE of $G$. Under Assumption \ref{ass_game}{\rm(i)}, {\rm(ii)}, {\rm(iv)}, and {\rm(v)},
\begin{enumerate}\renewcommand\labelenumi{{\rm(\alph{enumi})}}
\item $(\widetilde{\sigma}_1^*,\widetilde{\sigma}_2^*)$ is an $\epsilon_1$-BNE of $G$, where $\epsilon_1=O(\max_{l\in\{1,2\},i\in\{1,\dots,N_l\}}\{\theta_l^i-\theta_1^{i-1}\})$;
\item for $l\in\{1,2\}$, 
\begin{equation*}
\lVert \widetilde{\sigma}_l^*-\sigma_l^*\rVert^2_{\mathcal{H}_l}\leq \frac{4\epsilon_1}{\mu}, 
\end{equation*}
that is to say, as $N_1$ and $N_2$ tend to infinity, $\lim_{N_1,N_2\to\infty} \lVert \widetilde{\sigma}_l^*-\sigma_l^*\rVert_{\mathcal{H}_l}=0.$
\end{enumerate}
\end{lemma}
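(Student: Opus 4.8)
The plan is to reformulate both claims in terms of the aggregate (type-integrated) cost of $\Xi_1$, namely $J(\sigma_1,\sigma_2)=\int_{\Theta_1}\int_{\Theta_2}f_1(\sigma_1(\theta_1),\sigma_2(\theta_2),\theta_1,\theta_2)p(\theta_1,\theta_2)\,d\theta_2\,d\theta_1$, and its discretized analogue $\widetilde{J}$ built from $\widetilde{P}$ and \eqref{eq_expcostdis}. Because the game is zero-sum, $J=-J_2$, so the continuous BNE $(\sigma_1^*,\sigma_2^*)$ is a saddle point of $J$ (minimizing in $\sigma_1$, maximizing in $\sigma_2$), and the DBNE $(\widetilde{\sigma}_1^*,\widetilde{\sigma}_2^*)$ is a saddle point of $\widetilde{J}$ over $\widetilde{\Sigma}_1\times\widetilde{\Sigma}_2$. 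Writing $\Delta=\max_{l,i}\{\theta_l^i-\theta_l^{i-1}\}$, the whole argument rests on one uniform discretization estimate: for strategies that are constant on each grid cell (as the extension \eqref{eq_extension} produces), $|J(\widetilde{\sigma}_1,\widetilde{\sigma}_2)-\widetilde{J}(\widetilde{\sigma}_1,\widetilde{\sigma}_2)|=O(\Delta)$, and, more delicately, the continuous conditional objective $U_1(\cdot,\widetilde{\sigma}_2^*,\theta_1)$ and the discrete objective $\widetilde{U}_1(\cdot,\widetilde{\sigma}_2^*,\theta_1^i)$ differ by $O(\Delta)$ uniformly in $x\in\mathcal{X}_1$ and across all cells.

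For part (a) I would verify the first $\epsilon_1$-BNE inequality of Definition \ref{def_epsbne}, i.e. $J(\widetilde{\sigma}_1^*,\widetilde{\sigma}_2^*)\le J(\sigma_1,\widetilde{\sigma}_2^*)+\epsilon_1$ for every $\sigma_1\in\Sigma_1$; the other is symmetric. Since $J(\sigma_1,\widetilde{\sigma}_2^*)=\int_{\Theta_1}U_1(\sigma_1(\theta_1),\widetilde{\sigma}_2^*,\theta_1)p_1(\theta_1)\,d\theta_1$ can be minimized pointwise, it suffices to bound $J(\widetilde{\sigma}_1^*,\widetilde{\sigma}_2^*)$ against the continuous best-response value $\int_{\Theta_1}\min_{x}U_1(x,\widetilde{\sigma}_2^*,\theta_1)p_1(\theta_1)\,d\theta_1$. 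I would then work cell by cell and telescope the integrand $U_1(\widetilde{\sigma}_1^*(\theta_1^i),\widetilde{\sigma}_2^*,\theta_1)-\min_x U_1(x,\widetilde{\sigma}_2^*,\theta_1)$ through the intermediate quantities $U_1(\cdot,\widetilde{\sigma}_2^*,\theta_1^i)$: the two terms that move only the type argument are $O(\Delta)$ by Lipschitz continuity of $U_1$ in $\theta_1$ (Assumptions \ref{ass_game}(iv)--(v)); the term comparing the continuous minimizer at $\theta_1^i$ with $x^*(\theta_1)$ is nonpositive by optimality; and the remaining term, comparing $\widetilde{\sigma}_1^*(\theta_1^i)$ (the minimizer of $\widetilde{U}_1$) with the minimizer of $U_1$ at $\theta_1^i$, is $O(\Delta)$ precisely because $|U_1-\widetilde{U}_1|=O(\Delta)$ there. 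Integrating against $p_1$ (total mass one) yields $\epsilon_1=O(\Delta)$. Here the modified best response of Definition \ref{def_disbr} is the natural bridge, isolating the type-grid approximation inside $f_1$ from the conditional-density approximation.

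Part (b) then follows cleanly from part (a) together with $\mu$-strong convexity, which transfers from $f_l$ in $x_l$ to $J$ in $\sigma_l$ in the $\mathcal{H}_l$-norm, since the weight $p_l(\theta_l)$ is exactly the one defining $\langle\cdot,\cdot\rangle_{\mathcal{H}_l}$. Because $\sigma_1^*$ minimizes $J(\cdot,\sigma_2^*)$ and $\sigma_2^*$ maximizes $J(\sigma_1^*,\cdot)$, strong convexity and concavity give $\frac{\mu}{2}\|\widetilde{\sigma}_1^*-\sigma_1^*\|_{\mathcal{H}_1}^2\le J(\widetilde{\sigma}_1^*,\sigma_2^*)-J(\sigma_1^*,\sigma_2^*)$ and $\frac{\mu}{2}\|\widetilde{\sigma}_2^*-\sigma_2^*\|_{\mathcal{H}_2}^2\le J(\sigma_1^*,\sigma_2^*)-J(\sigma_1^*,\widetilde{\sigma}_2^*)$. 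Adding these and cancelling the common $J(\sigma_1^*,\sigma_2^*)$ leaves $\frac{\mu}{2}\sum_l\|\widetilde{\sigma}_l^*-\sigma_l^*\|_{\mathcal{H}_l}^2\le J(\widetilde{\sigma}_1^*,\sigma_2^*)-J(\sigma_1^*,\widetilde{\sigma}_2^*)$, and applying the two $\epsilon_1$-saddle inequalities from part (a) at $\sigma_2=\sigma_2^*$ and $\sigma_1=\sigma_1^*$ bounds the right-hand side by $2\epsilon_1$. This gives $\|\widetilde{\sigma}_l^*-\sigma_l^*\|_{\mathcal{H}_l}^2\le 4\epsilon_1/\mu$ for each $l$, and hence the stated limit as $N_1,N_2\to\infty$.

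The hard part will be the uniform $O(\Delta)$ control of $|U_1-\widetilde{U}_1|$, and especially its conditional-density component: one must show that replacing the exact conditional density $p(\theta_1^i,\cdot)/p_1(\theta_1^i)$ by the cell-averaged discrete conditional $\widetilde{P}_1(\cdot\,|\,\theta_1^i)$ costs only $O(\Delta)$, which needs both the Lipschitz continuity of $p$ and the uniform positive lower bound on $p_l$ from Assumption \ref{ass_game}(v). That lower bound, via the equal-mass rule \eqref{eq_dispoints}, also forces $\Delta\to0$ as $N_1,N_2\to\infty$, while compactness of $\mathcal{X}_l$ is what makes every constant uniform in $x$ and across cells.
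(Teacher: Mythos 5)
Your proposal is correct and takes essentially the same route as the paper's proof: part (a) rests on the identical uniform $O(\Delta)$ discretization estimate (Lipschitz continuity of $f_l$ in the types, Lipschitz continuity of $p$ together with the positive lower bound on $p_l$ from Assumption \ref{ass_game}(v), and compactness to make all constants uniform in $x$ and across cells), and part (b) is the same zero-sum saddle-point argument combined with $\mu$-strong convexity transferred to the $\mathcal{H}_l$-norm through the weight $p_l$, yielding exactly $\lVert\widetilde{\sigma}_l^*-\sigma_l^*\rVert^2_{\mathcal{H}_l}\leq 4\epsilon_1/\mu$. The only deviations are cosmetic: in (a) you telescope pointwise through the continuous best-response value $\int_{\Theta_1}\min_x U_1(x,\widetilde{\sigma}_2^*,\theta_1)p_1(\theta_1)d\theta_1$ where the paper integrates the DBNE optimality inequality against the cell-averaged density, and in (b) you add the two strong-convexity inequalities and cancel $J(\sigma_1^*,\sigma_2^*)$ where the paper runs a four-step inequality chain for each player separately.
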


Lemma \ref{thm_bne} shows the relation between the DBNE$(N_1,N_2)$ of $\widetilde{G}$ and BNE of $G$, and provides an explicit error bound, compared with heuristic approximations \cite{guos2021,guow2021}. With this relation, we can approximate the BNE of $G$ using the DBNE$(N_1,N_2)$ of $\widetilde{G}$.

\begin{remark}We make a few comments on Lemma \ref{thm_bne}.
\begin{enumerate}\renewcommand\labelenumi{(\roman{enumi})}
\item Here we utilize the zero-sum condition to compute the error between the DBNE and the BNE, and obtain the above convergence result. For general cases, Lemma \ref{thm_bne}(a) still holds. However, since the strategy sets $\Sigma_l$ are not compact according to Subsection \ref{subsec_bne}, we cannot obtain Lemma \ref{thm_bne}(b) that the DBNE in general cases converges to the BNE of $G$. 
\item Since the conditions in Lemma \ref{thm_bne} do not require the continuity of types, we can also apply our discretization to games with countable types in order to simplify these infinite-dimensional models and get an approximate BNE.
\end{enumerate} 
\end{remark}

\subsection{Communication step}\label{sec_compress}

In this subsection, we provide the communication step in the algorithm by compressing communication to reduce communication burdens. As discussed in Section \ref{sec_schedule}, the strategy dimension may be explosively large owing to high accuracy. Due to the limited communication capabilities, it is hard to convey such large-dimensional data \cite{alistarh2017,bernstein2018,chen2021,sattler2020,xu2022}. Therefore, our goal in this step is to reduce the communication loads while ensuring that agents can get unbiased estimation of both subnetworks. Here we provide a designed sparsification operator and a novel communication scheme to adapt to time-varying digraphs, and show that agents can get unbiased estimations through our designed efficient communication.

Communication compression is a practical technique for reducing the amount of data. Although communication compression will slow down the convergence and increase the computation, it can still considerably improve the performance by greatly reducing the communication loads. Motivated by \cite{chen2021}, we use sparsification to reduce the size of data in order to meet the limited communication capabilities.

\begin{remark}\label{rmk_compress}
Existing communication compression focused on how to eliminate the bias from compression \cite{alistarh2017,bernstein2018,sattler2020}. In their works, to keep the estimations unbiased, each agent (in a static network \cite{alistarh2017,bernstein2018}) or a center server (in a centralized network \cite{sattler2020}) had to record other agents' states. Besides, such an imperfect communication situation was also investigated in \cite{fang2021,lei2018,yu2021}. However, in a time-varying network, because agents cannot receive neighbors' updates every iteration, the above methods are not practical in our model and we need a novel one to ensure unbiased estimations.
\end{remark}

Following the discretization step, we take a sparsification operator $Q_l:\mathbb{R}^{N_lm_l}\to\mathbb{R}^{N_lm_l}$ for $l\in\{1,2\}$ in our algorithm. Each agent selects and sends $d_l$ out of $N_lm_l$ entries of a $N_lm_l$-dimensional vector to its neighbors, while sending empty messages in the other entries to its neighbors. Denote the compression ratio by $\rho_l=d_l/N_lm_l$. By reducing the dimension of data in communication, the sparsification operator can reduce the communication loads to an affordable level, which is necessary for designing distributed algorithms. Here we provide a simple illustrative example.

\textit{Example 1}. Given $\mathcal{X}_1=\mathcal{X}_2=[0,1]^{10}$. In order to achieve a certain accuracy, we take $N_1=N_2=1000$. Thus, the dimension of agents' strategies are 10000. We adopt two different communication rules to solve the DBNE: (i) take $\rho_l=1$, \textit{i.e.}, without compression; (ii) take $\rho_l=0.1$. Clearly, the amount of communication in (ii) is $1/10$ of (i). Details on the communication data size can be found in Section \ref{sec_simu}.

Since the sparsification operator can destroy the symmetry of the adjacency matrices, following the works on distributed algorithms with asymmetric adjacency matrices \cite{kai2014,chen2021}, we introduce a surplus vector for each agent $v_l^i$, denoted by $ s_{l,i}(t)\in\mathbb{R}^{N_lm_l}$. When exchanging information, agent $v_l^i$ compresses both state vector (strategy) and surplus vector and sends the compressed message to current neighbors in $\Xi_l$, and then send compressed states of $\Xi_l$ to current neighbors in $\Xi_{3-l}$.

Because the incoming messages are high-dimensional vectors and hard to analyze, we split the vector-valued communication problem each entry by entry into individual scale-valued sub-problems. In each sub-problem, agents regard those who send nonempty messages as their neighbors. Denote the graph sequence for the $k$-th entry by $\mathcal{G}_{l,k}(t)$, $l\in\{1,2,12,21\}$. For $l\in\{1,2\}$, define the in-neighbor adjacency matrix $A_{l,k}(t)$ of $\mathcal{G}_{l,k}(t)$ for the $k$-th entry ($k\in\{1,\dots,2N_lm_l\}$) of the sparsified vectors $col(Q_l(\sigma_{l,i}(t)),Q_l( s_{l,i}(t))$ at time $t$ as 
\begin{equation}\label{eq_def_A}
[A_{l,k}(t)]_{ij}=\left\{\begin{aligned}&\frac{1}{|\mathcal{A}_{l,i,k}(t)|}\quad\mathrm{if}\ v^j_l\in\mathcal{A}_{l,i,k}(t),\\&0\qquad\qquad\ \ \mathrm{otherwise},\end{aligned}\right. 
\end{equation}
where $\mathcal{A}_{l,i,k}(t)=\{v_l^i\}\cup\{v_l^j|v_l^j\in\mathcal{N}^i_l(t),[Q_l(\sigma_{l,j}(t))]_k\ne\emptyset\}$ is the in-neighbor set of $v_l^i$ in $\mathcal{G}_{l,k}(t)$. Similarly, for $l\in\{1,2\}$, define the out-neighbor adjacency matrix $B_{l,k}(t)$ of $\mathcal{G}_{l,k}(t)$ for the $k$-th entry of $col(Q_l(\sigma_{l,i}(t)),Q_l( s_{l,i}(t))$ ($k\in\{1,\dots,2N_lm_l\}$) as 
\begin{equation}\label{eq_def_B}
[B_{l,k}(t)]_{ij}=\left\{\begin{aligned}&\frac{1}{|\mathcal{B}_{l,i,k}(t)|}\quad \mathrm{if}\ v_l^j\in\mathcal{B}_{l,i,k}(t),\\&0\qquad\qquad\ \ \mathrm{otherwise},\end{aligned}\right. 
\end{equation}
where $\mathcal{B}_{l,i,k}(t)=\{v_l^i\}\cup\{v_l^j|v_l^i\in\mathcal{N}^j_l(t),[Q_l(\sigma_{l,j}(t))]_k\ne\emptyset\}$ is the out-neighbor set of $v_l^i$ in $\mathcal{G}_{l,k}(t)$. 

Then each agent $v_l^i$ compresses the estimation $\hat{\sigma}_{l,i}(t)$ of $\Xi_l$ and send it to neighbors in $\Xi_{3-l}$. For $l\in\{1,2\}$, define the in-neighbor adjacency matrix $C_{l,k}(t)$ of $\mathcal{G}_{(3-l)l,k}(t)$ for the $k$-th entry ($k\in\{1,\dots,N_{3-l}m_{3-l}\}$) of $Q_{3-l}(\hat{\sigma}_{3-l,i}(t))$ at time $t$ as 
\begin{equation}\label{eq_def_C}
[C_{l,k}(t)]_{ij}=\left\{\begin{aligned}&\frac{1}{|\mathcal{C}_{l,i,k}(t)|}\quad \mathrm{if}\ v_{3-l}^j\in\mathcal{C}_{l,i,k}(t),\\&0\qquad\qquad\ \mathrm{otherwise},\end{aligned}\right. 
\end{equation}
where $\mathcal{C}_{l,i,k}(t)=\{v_{3-l}^j|v_{3-l}^j\in\mathcal{N}^i_{(3-l)l}(t),[Q_{3-l}(\hat{\sigma}_{3-l,i}(t))]_k\ne\emptyset\}$ is the in-neighbor set of $v_l^i$ in $\mathcal{G}_{(3-l)l,k}(t)$.

Considering the difficulties of communication in time-varying networks, we provide an effective sparsification rule to satisfy the connectivity of $\mathcal{G}_{l,k}(t)$. For $q\geq 0$ and $x(t)\in\mathbb{R}^{N_lm_l}$, when $t=q\mathcal{R}_0+1,\dots,(q+1)\mathcal{R}_0$, take the sparsification $Q_l$ as  
\begin{equation*}
[Q_l(x(t))]_k=[x(t)]_k, k\in\{ (qd_l+1)\bmod N_lm_l,\dots,(q+1)d_l\bmod N_lm_l\}, 
\end{equation*} while $[Q_l(x(t))]_k$ is empty for other $k$. Define $\mathcal{R}=\max_l\{\mathcal{R}_0\lceil N_lm_l/d_l\rceil\}$ and $\mathcal{S}=\max_l\{\mathcal{S}_0\mathcal{R}_0\lceil N_{l}m_{l}/d_{l}\rceil\}$. Under the above sparsification, we have the following proposition for the connectivity of $\mathcal{G}_{l,k}(t)$, whose proof is presented in Apendix \ref{pf_com_graph}. 

\begin{proposition}\label{prop_compress_graph}
Under Assumption \ref{ass_game}{\rm(vi)}, for $l\in\{1,2\}$,
\begin{enumerate}\renewcommand\labelenumi{{\rm(\alph{enumi})}}
\item there exists an integer $\mathcal{R}>0$ that for each $k\in\{1,\dots,2N_lm_l\}$, the digraph $\mathcal{G}_{l,k}(t)$ is $\mathcal{R}$-jointly strongly connected, \textit{i.e.}, $\bigcup_{r=t}^{t+R-1} \mathcal{G}_{l,k}(r)$ is strongly connected for any $t\geq0$;
\item there exists an integer $\mathcal{S}>0$ that for each $k\in\{1,\dots,N_{3-l}m_{3-l}\}$, each agent's neighbor set in $\mathcal{G}_{(3-l),l}(t)$ satisfies $\bigcup_{r=t}^{t+\mathcal{S}-1}\mathcal{C}_{l,i,k}(r)\ne\emptyset$ for any $t\geq0$.
\end{enumerate} 
\end{proposition}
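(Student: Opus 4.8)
The plan is to first make a single reduction that collapses both claims to purely combinatorial schedule-coincidence statements, and then run a coverage argument for part (a) and a density argument for part (b). The key observation is that the sparsification rule fixes the emptiness pattern as a function of the round index $q=\lfloor (t-1)/\mathcal{R}_0\rfloor$ \emph{alone}: for every agent and every transmitted value $x$, one has $[Q_l(x(t))]_k\neq\emptyset$ if and only if $k$ lies in the block $\mathcal{K}_l(q):=\{(qd_l+1)\bmod N_lm_l,\dots,(q+1)d_l\bmod N_lm_l\}$. Thus the predicate ``coordinate $k$ is active at time $t$'' does not depend on the agent or on the state being sent. Substituting this into \eqref{eq_def_A}--\eqref{eq_def_B} shows that $\mathcal{G}_{l,k}(t)$ carries exactly the edge set of the underlying $\mathcal{G}_l(t)$ when $k\in\mathcal{K}_l(q)$ and is edgeless otherwise, while substituting it into \eqref{eq_def_C} shows that $\mathcal{C}_{l,i,k}(t)=\mathcal{N}^i_{(3-l)l}(t)$ when $k$ is active and $\mathcal{C}_{l,i,k}(t)=\emptyset$ otherwise. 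This reduces (a) to exhibiting, inside the window, one full active round on which $\mathcal{G}_l$ is jointly connected, and (b) to exhibiting one active time that also carries a cross-neighbour.

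For part (a) I would use coverage. As $q$ increases by one the block $\mathcal{K}_l(q)$ shifts cyclically by $d_l$, so $\bigcup_{q'=q}^{q+\lceil N_lm_l/d_l\rceil-1}\mathcal{K}_l(q')=\{1,\dots,N_lm_l\}$; hence any span of $\lceil N_lm_l/d_l\rceil$ consecutive full rounds contains, for each fixed $k$, a round $q^\star$ with $k\in\mathcal{K}_l(q^\star)$, during whose $\mathcal{R}_0$ steps $\mathcal{G}_{l,k}$ coincides with $\mathcal{G}_l$. Over that round Assumption \ref{ass_game}(vi) makes the union of $\mathcal{G}_l(r)$ strongly connected, so the corresponding union of $\mathcal{G}_{l,k}(r)$ is strongly connected, and a fortiori so is $\bigcup_{r=t}^{t+\mathcal{R}-1}\mathcal{G}_{l,k}(r)$. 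Taking $\mathcal{R}=\mathcal{R}_0\lceil N_lm_l/d_l\rceil$ (the maximum over $l$ yields the stated constant) I would then verify that this length delivers a complete active round even when $t$ is not aligned to a round boundary. Since the strategy half ($k\le N_lm_l$) and the surplus half ($k> N_lm_l$) of $col(Q_l(\sigma_{l,i}),Q_l(s_{l,i}))$ obey the identical pattern, the argument extends verbatim to all $k\in\{1,\dots,2N_lm_l\}$.

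For part (b) the plan is to intersect two schedules. For fixed $k$ the active times of $\Xi_{3-l}$ form blocks of $\mathcal{R}_0$ consecutive steps recurring within every span of $C_{3-l}:=\mathcal{R}_0\lceil N_{3-l}m_{3-l}/d_{3-l}\rceil$ steps (the same coverage fact applied to $\Xi_{3-l}$), whereas Assumption \ref{ass_game}(vi) supplies a cross-neighbour time in every $\mathcal{S}_0$ consecutive steps. Partitioning $[t,t+\mathcal{S}-1]$ with $\mathcal{S}=\mathcal{S}_0 C_{3-l}$ into $\mathcal{S}_0$ consecutive length-$C_{3-l}$ blocks, each block hosts a full active window for coordinate $k$, and I would pair these active windows against the guaranteed cross-neighbour times to locate one step $r$ at which coordinate $k$ is active and $\mathcal{N}^i_{(3-l)l}(r)\neq\emptyset$, so that $\mathcal{C}_{l,i,k}(r)=\mathcal{N}^i_{(3-l)l}(r)\neq\emptyset$. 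The stated constant $\mathcal{S}=\max_l\{\mathcal{S}_0\mathcal{R}_0\lceil N_lm_l/d_l\rceil\}$ is exactly $\mathcal{S}_0$ cycles, which is the slack this pairing requires.

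The hard part will be precisely this schedule coincidence in (b): the active windows sit at deterministic, comparatively sparse positions, while the cross-neighbour times are only guaranteed to be $\mathcal{S}_0$-dense and may otherwise be placed adversarially by the time-varying network, so forcing the two to provably meet inside a window of length $\mathcal{S}$ is where the interplay between $\mathcal{R}_0$, $\mathcal{S}_0$ and the cycle length $C_{3-l}$ must be pinned down carefully. Together with the minor boundary bookkeeping in (a) --- a round spans $\mathcal{R}_0$ steps while joint connectivity is stated over $t,\dots,t+\mathcal{R}_0$, and the window start $t$ need not align with a round boundary --- these alignment issues carry essentially all the content, the rest being the routine reduction described above.
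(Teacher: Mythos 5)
Your reduction and your part (a) are sound and are essentially the paper's own route: the proof in Appendix C is a two-sentence version of your coverage argument. Since the sparsification pattern depends only on $t$ and not on the agent or the transmitted value, $\mathcal{G}_{l,k}(t)$ inherits the full edge set of $\mathcal{G}_l(t)$ on rounds where $k$ is active and is edgeless (self-loops only) otherwise, and the active block sweeps all $N_lm_l$ coordinates in $\lceil N_lm_l/d_l\rceil$ rounds, giving $\mathcal{R}=\max_l\{\mathcal{R}_0\lceil N_lm_l/d_l\rceil\}$ up to boundary terms. The two alignment issues you flag in (a) --- the window start need not be round-aligned, and Assumption \ref{ass_game}(vi) states connectivity over $t,\dots,t+\mathcal{R}_0$ (that is, $\mathcal{R}_0+1$ steps) while a round has only $\mathcal{R}_0$ steps --- only inflate the constant, and since the claim is existential in $\mathcal{R}$ this is harmless; the paper ignores them entirely.

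Part (b) is where your proposal has a genuine gap, and it sits exactly at the point you flagged: no pairing or pigeonhole argument can force the $\mathcal{S}_0$-dense cross-neighbour times to meet the deterministic active blocks when $\mathcal{S}_0>\mathcal{R}_0$. Concretely, take $\mathcal{R}_0=1$, $N_{3-l}m_{3-l}=2d_{3-l}$ (so the cycle length is $2$), $\mathcal{S}_0=2$, and fix $k$ in the first block, so that $k$ is active exactly at odd $t$. Let the network grant $v_l^i$ a cross-neighbour exactly at even $t$: every window of $\mathcal{S}_0=2$ consecutive steps then contains a cross-neighbour time, so Assumption \ref{ass_game}(vi) holds, yet $\mathcal{C}_{l,i,k}(t)=\emptyset$ for \emph{every} $t$ --- at odd $t$ there is no neighbour, at even $t$ the entry $k$ of the compressed message is empty by \eqref{eq_def_C} --- so no finite $\mathcal{S}$ exists at all. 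The mechanism is general: avoiding all active blocks creates gaps of length only $\mathcal{R}_0$ in the cross-neighbour schedule, which is consistent with $\mathcal{S}_0$-density whenever $\mathcal{S}_0\geq\mathcal{R}_0+1$, and within your window of $\mathcal{S}_0$ cycles all the guaranteed cross-times can sit in the inactive portion of each cycle, so counting cannot exclude this. Your pairing does go through under the extra hypothesis $\mathcal{S}_0\leq\mathcal{R}_0$: each active block for $k$ then lasts $\mathcal{R}_0\geq\mathcal{S}_0$ consecutive steps, hence contains a complete $\mathcal{S}_0$-window, which must contain a cross-neighbour time, and $\mathcal{S}=\mathcal{R}_0(\lceil N_{3-l}m_{3-l}/d_{3-l}\rceil+1)$ already suffices. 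You should know that the paper's own proof makes precisely the same unjustified leap --- it asserts that each entry is received from $\Xi_{3-l}$ within $\mathcal{S}_0\lceil N_{3-l}m_{3-l}/d_{3-l}\rceil$ iterations with no supporting coincidence argument --- so your proposal stalls exactly where the paper is silent; closing (b) requires either an assumption such as $\mathcal{S}_0\leq\mathcal{R}_0$, or synchronising the sparsification schedule with the cross-communication pattern, not a finer analysis of the two given schedules.
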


Different from general sparsification operators \cite{chen2021} that randomly select entries, our sparsification is deterministic and can better adapt to time-varying networks in our model. Note that our sparsification design is to satisfy the worst case. For specific networks, we can improve the sparsification to get smaller $\mathcal{R}$ and $\mathcal{S}$. Based on the above sparsification, we design the following communication scheme to show how agents communicate and make estimations of both subnetworks.

\textbf{Communication scheme.}\label{scheme}
\begin{enumerate}
\item Send compressed state vector $Q_l(\sigma_{l,i}(t))$ to neighbors in $\Xi_l$.
\item Estimate the $k$-th entry ($k\in\{1,\dots,N_lm_l\}$) of $\Xi_{l}$'s states based on 
\begin{equation}\label{eq_estinsub}
[\hat{\sigma}_{l,i}(t)]_k=\sum_{j=1}^{n_l}[A_{l,k}(t)]_{ij}[\sigma_{l,j}(t)]_k. 
\end{equation} 
\item Send compressed estimation $Q_l(\hat{\sigma}_{l,i}(t))$ to neighbors in $\Xi_{3-l}$. 
\item Estimate the $k$-th entry ($k\in\{1,\dots,N_lm_l\}$) of $\Xi_{3-l}$'s states based on  \begin{equation}\label{eq_estintersub}
[\hat{\zeta}_{l,i}(t)]_k=\left\{\begin{aligned}
&\sum_{j=1}^{n_{3-l}} [C_{l,k} (t)]_{ij} [\hat{\sigma}_{3-l,j}(t)]_k\ \mathrm{if}\ |\mathcal{C}_{l,i,k}(t)|\ne 0,\\
&[\hat{\zeta}_{l,i}(t-1)]_k\qquad\qquad\qquad \mathrm{otherwise}.
\end{aligned}\right. 
\end{equation}
\end{enumerate}

Since agents might receive empty messages at some entries in inter-subnetwork communication, we use historical information to supplement the estimations in \eqref{eq_estintersub}. Define the average state $\bar{\sigma}_l(t)=\frac{1}{n_l}\sum_{i=1}^{n_l}(\sigma_{l,i}(t)+ s_{l,i}(t))$. The following result reveals that the estimations are unbiased, whose proof is presented in Apendix \ref{pf_com}. 

\begin{lemma}\label{prop_compress}
Let Assumption \ref{ass_game}(vi) hold. For given $\varepsilon_0,\varepsilon_1>0$, suppose that there exists an integer $T>0$, such that for any $t>T$, $\lVert\sigma_{l,i}(t)-\bar{\sigma}_l(t)\rVert\leq\varepsilon_0$ and $\lVert\bar{\sigma}_l(t+1)-\bar{\sigma}_l(t)\rVert\leq\varepsilon_1$. On this basis, for $l\in\{1,2\}$, 
\begin{enumerate}\renewcommand\labelenumi{{\rm(\alph{enumi})}}
\item $\lVert\hat{\sigma}_{l,i}(t)-\bar{\sigma}_l(t)\rVert\leq\sqrt{N_lm_l}\varepsilon_0$, for any $t>T$;
\item $\lVert\hat{\zeta}_{3-l,i}(t)-\bar{\sigma}_{l}(t)\rVert\leq N_lm_l\varepsilon_0+\mathcal{S}\sqrt{N_lm_l}\varepsilon_1$ for any $t>T+\mathcal{S}$.
\end{enumerate} 
\end{lemma}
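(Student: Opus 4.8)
The plan is to establish both bounds coordinate-by-coordinate and then aggregate the per-coordinate estimates across the $N_lm_l$ entries via the $2$-norm, exploiting that each of the averaging matrices defined in \eqref{eq_def_A} and \eqref{eq_def_C} is row-stochastic on its active rows (each nonzero row of $A_{l,k}(t)$, resp.\ $C_{3-l,k}(t)$, consists of $|\mathcal{A}_{l,i,k}(t)|$, resp.\ $|\mathcal{C}_{3-l,i,k}(t)|$, equal entries summing to one).

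For part (a) I would use row-stochasticity of $A_{l,k}(t)$ to rewrite \eqref{eq_estinsub} as a deviation from the average,
\[
[\hat{\sigma}_{l,i}(t)]_k-[\bar{\sigma}_l(t)]_k=\sum_{j=1}^{n_l}[A_{l,k}(t)]_{ij}\bigl([\sigma_{l,j}(t)]_k-[\bar{\sigma}_l(t)]_k\bigr).
\]
Bounding each coordinate difference by the standing consensus assumption $\lVert\sigma_{l,j}(t)-\bar{\sigma}_l(t)\rVert\le\varepsilon_0$ and using $\sum_j[A_{l,k}(t)]_{ij}=1$ gives $|[\hat{\sigma}_{l,i}(t)]_k-[\bar{\sigma}_l(t)]_k|\le\varepsilon_0$ for every $k$; summing the squares over $k\in\{1,\dots,N_lm_l\}$ and taking a square root yields $\lVert\hat{\sigma}_{l,i}(t)-\bar{\sigma}_l(t)\rVert\le\sqrt{N_lm_l}\,\varepsilon_0$. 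This step is routine.

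The main work is part (b), where the historical-supplement branch of \eqref{eq_estintersub} must be controlled. Writing the inter-subnetwork recursion for $\hat{\zeta}_{3-l,i}$ (agent $v_{3-l}^i$'s estimate of $\Xi_l$), I would fix a coordinate $k$ and let $t'\le t$ be the most recent time at which a fresh update occurred, i.e.\ $|\mathcal{C}_{3-l,i,k}(t')|\ne0$, so that $[\hat{\zeta}_{3-l,i}(t)]_k=[\hat{\zeta}_{3-l,i}(t')]_k=\sum_{j=1}^{n_l}[C_{3-l,k}(t')]_{ij}[\hat{\sigma}_{l,j}(t')]_k$. The decisive input is Proposition \ref{prop_compress_graph}(b): within any window of length $\mathcal{S}$ each coordinate is refreshed at least once, so $t-t'\le\mathcal{S}-1$, and since $t>T+\mathcal{S}$ this forces $t'>T$, which permits invoking both standing assumptions and part (a) at the time $t'$. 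Using row-stochasticity of $C_{3-l,k}(t')$ I would then split the error into a consensus part and a drift part,
\[
[\hat{\zeta}_{3-l,i}(t)]_k-[\bar{\sigma}_l(t)]_k=\underbrace{\sum_{j=1}^{n_l}[C_{3-l,k}(t')]_{ij}\bigl([\hat{\sigma}_{l,j}(t')]_k-[\bar{\sigma}_l(t')]_k\bigr)}_{\text{consensus}}+\underbrace{\bigl([\bar{\sigma}_l(t')]_k-[\bar{\sigma}_l(t)]_k\bigr)}_{\text{drift}}.
\]
The consensus term is bounded coordinate-wise by $\sqrt{N_lm_l}\,\varepsilon_0$ via part (a) applied at $t'$, and the drift term by the telescoping estimate $\sum_{\tau=t'}^{t-1}\lVert\bar{\sigma}_l(\tau+1)-\bar{\sigma}_l(\tau)\rVert\le(t-t')\varepsilon_1\le\mathcal{S}\varepsilon_1$, using the second standing assumption (valid since $\tau\ge t'>T$). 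Aggregating the per-coordinate bound $\sqrt{N_lm_l}\,\varepsilon_0+\mathcal{S}\varepsilon_1$ over the $N_lm_l$ coordinates then produces $\lVert\hat{\zeta}_{3-l,i}(t)-\bar{\sigma}_l(t)\rVert\le N_lm_l\varepsilon_0+\mathcal{S}\sqrt{N_lm_l}\,\varepsilon_1$, as claimed.

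I expect the main obstacle to be the bookkeeping in (b): correctly tracking the staleness window so that the freshness guarantee of Proposition \ref{prop_compress_graph}(b) and the requirement $t'>T$ line up (which is precisely why the bound is asserted only for $t>T+\mathcal{S}$), together with keeping the index swap $l\leftrightarrow 3-l$ consistent so that part (a) is applied to the correct subnetwork's intra-network estimates $\hat{\sigma}_{l,j}$ rather than to $\Xi_{3-l}$.
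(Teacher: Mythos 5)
Your proof is correct and follows essentially the same route as the paper's: part (a) by row-stochastic averaging of the consensus errors, and part (b) by unrolling the hold branch of \eqref{eq_estintersub} to the last refresh time, invoking Proposition \ref{prop_compress_graph}(b) to bound the staleness by $\mathcal{S}$, and splitting the error into a consensus term (controlled via part (a)) and a telescoped drift term. Your per-coordinate bounds aggregated through the $2$-norm are, if anything, slightly more careful in bookkeeping than the paper's sum-of-entries chain (and you correctly track the index swap, using $C_{3-l,k}$ where the paper's appendix writes $C_{l,k}$), but the underlying argument is the same.
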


Note that the conditions in Proposition \ref{prop_compress} will be guaranteed by the convergence of Algorithm \ref{alg_main}. Proposition \ref{prop_compress} provides explicit error bounds between the estimations and the average states, which means that the designed sparsification rule and the communication scheme are effective in exchanging information. 

\section{Distributed algorithm}\label{sec_alg}

Due to the special update rule of surplus-based algorithms, we cannot employ widely-used constraint methods, such as projection and Lagrange multiplier. Denote strategy $\sigma_l\in\widetilde{\Sigma}_l$ at type $\theta_l^r\in\widetilde{\Theta}_l$ by $\sigma_l^r$. Thus, we give the following penalty function $H_l(x):\mathbb{R}^{m_l}\to\mathbb{R}$ to ensure the generated results lie in the action set $\mathcal{X}_l$. 
\begin{equation}
H_l(\sigma_{l,i}^r(t))=E_l\lVert \sigma_{l,i}^r(t)-\Pi_{X_l}(\sigma_{l,i}^r(t))\rVert, 
\end{equation}
where $E_l>L_{l,l}$ is a constant. 
\begin{proposition}\label{prop_penalty}
For $l\in\{1,2\}$, $\sigma_{3-l}\in\widetilde{\Sigma}_{3-l}$, and $r\in\{1,\dots,N_l\}$,
\begin{enumerate}\renewcommand\labelenumi{{\rm(\alph{enumi})}}
\item $H_l(x)$ is convex and $E_l$-Lipschitz continuous in $\mathbb{R}^{m_l}$;
\item $h_l(x)=E_l \frac{x-\Pi_{\mathcal{X}_l}(x)}{\lVert x-\Pi_{\mathcal{X}_l}(x)\rVert}$ is a subgradient of $H_l(x)$;
\item all the minimizers $\sigma_l^r$ of $\widetilde{U}_l(\sigma_1,\sigma_2,\theta_l^r)+H_l(\sigma_l^r)$ are in the action set $\mathcal{X}_l$.
\end{enumerate} 
\end{proposition}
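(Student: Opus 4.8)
The plan is to handle the three parts in turn after the single unifying observation that $H_l = E_l\, d$, where $d(x):=\lVert x-\Pi_{\mathcal{X}_l}(x)\rVert=\min_{y\in\mathcal{X}_l}\lVert x-y\rVert$ is the Euclidean distance to the closed convex set $\mathcal{X}_l$. With this in hand, the whole proposition reduces to standard properties of the distance function to a convex set, which I would re-derive for self-containedness. I expect part (b), the subgradient identity, to be the main technical step, since it is where the projection variational inequality from the preliminaries must be invoked carefully; parts (a) and (c) are then short.

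For (a) I would write $d(x)=\inf_{y\in\mathcal{X}_l}\lVert x-y\rVert$ and note that $(x,y)\mapsto\lVert x-y\rVert$ is jointly convex while $\mathcal{X}_l$ is convex (Assumption \ref{ass_game}(i)); since partial minimization of a jointly convex function over a convex set produces a convex function, $d$ is convex and hence so is $H_l=E_l d$. For the Lipschitz bound I would use the projection as a feasible competitor: for any $x,x'$, $d(x)\leq\lVert x-\Pi_{\mathcal{X}_l}(x')\rVert\leq\lVert x-x'\rVert+d(x')$, and by symmetry $|d(x)-d(x')|\leq\lVert x-x'\rVert$, so $H_l$ is $E_l$-Lipschitz.

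For (b), understood at points $x\notin\mathcal{X}_l$ where the expression is defined, I would set $p=\Pi_{\mathcal{X}_l}(x)$ and $g=(x-p)/\lVert x-p\rVert$, so that $h_l(x)=E_l g$ with $\lVert g\rVert=1$. It suffices to establish $d(y)\geq d(x)+\langle g,\,y-x\rangle$ for every $y$, then multiply by $E_l$. Rewriting the right-hand side via $\langle g,\,x-p\rangle=\lVert x-p\rVert=d(x)$ reduces the claim to $\langle g,\,y-p\rangle\leq\lVert y-\Pi_{\mathcal{X}_l}(y)\rVert=d(y)$. I would split $\langle g,\,y-p\rangle=\langle g,\,y-\Pi_{\mathcal{X}_l}(y)\rangle+\langle g,\,\Pi_{\mathcal{X}_l}(y)-p\rangle$; the first term is at most $d(y)$ by Cauchy--Schwarz, and the second is nonpositive by the projection property $\langle x-\Pi_{\mathcal{X}_l}(x),\,\Pi_{\mathcal{X}_l}(x)-q\rangle\geq0$ stated in the preliminaries, applied with $q=\Pi_{\mathcal{X}_l}(y)\in\mathcal{X}_l$. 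This yields $h_l(x)\in\partial H_l(x)$. The delicate points here are restricting to $x\notin\mathcal{X}_l$ and orienting the variational inequality in the correct direction.

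For (c) I would invoke the exact-penalty principle, which is the reason $H_l$ was introduced. First, since each $f_{l,i}$ is $L_{l,l}$-Lipschitz in $x_l$ by Assumption \ref{ass_game}(iv), the average $f_l$ is $L_{l,l}$-Lipschitz in $x_l$, and therefore $\widetilde{U}_l(\sigma_1,\sigma_2,\theta_l^r)$ in \eqref{eq_expcostdis}, being a convex combination over $\theta_{3-l}\in\widetilde{\Theta}_{3-l}$ of values of $f_l$, is $L_{l,l}$-Lipschitz as a function of the single block $\sigma_l^r=\sigma_l(\theta_l^r)$. Now suppose for contradiction that a minimizer $\sigma_l^r$ of $\widetilde{U}_l+H_l$ lies outside $\mathcal{X}_l$, and put $p=\Pi_{\mathcal{X}_l}(\sigma_l^r)$, so $H_l(p)=0$ while $H_l(\sigma_l^r)=E_l\lVert\sigma_l^r-p\rVert$. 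Passing from $\sigma_l^r$ to $p$ changes $\widetilde{U}_l$ by at most $L_{l,l}\lVert\sigma_l^r-p\rVert$, hence the objective strictly drops, namely $[\widetilde{U}_l+H_l](\sigma_l^r)-[\widetilde{U}_l+H_l](p)\geq(E_l-L_{l,l})\lVert\sigma_l^r-p\rVert>0$ because $E_l>L_{l,l}$ and $\sigma_l^r\neq p$. This contradicts minimality, so every minimizer lies in $\mathcal{X}_l$.
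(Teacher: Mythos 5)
Your proposal is correct and follows essentially the same route as the paper's proof: part (b) uses the identical decomposition (Cauchy--Schwarz on the $y-\Pi_{\mathcal{X}_l}(y)$ term plus the projection variational inequality $\left<x-\Pi_{\mathcal{X}_l}(x),\Pi_{\mathcal{X}_l}(x)-q\right>\geq0$ on the $\Pi_{\mathcal{X}_l}(y)-\Pi_{\mathcal{X}_l}(x)$ term), and part (c) is the same exact-penalty comparison against the projection using $E_l>L_{l,l}$. The only differences are cosmetic---in (a) you cite partial minimization of a jointly convex function where the paper computes directly with the convex combination of the two projections, and you phrase (c) as a contradiction rather than a direct inequality---plus your explicit (and reasonable) restriction of (b) to $x\notin\mathcal{X}_l$, which the paper leaves implicit.
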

The proof of Proposition \ref{prop_penalty} is presented in Apendix \ref{pf_penalty}. Since $H_l(x)=0$ for $x\in\mathcal{X}_l$, by Proposition \ref{prop_penalty}(c), all the equilibria of game $\widetilde{G}$ with the expectation of cost $\widetilde{U}_l+H_l$ are DBNE. Thus, we take $\widetilde{U}_l+H_l$ instead of $\widetilde{U}_l$ and ignore the constrains to seek a DBNE. 

With the estimation $\hat{\zeta}_{l,i}(t)$, agent $v_l^i$ evaluates its subgradient by 
\begin{equation}\label{eq_subgrad}
g_{l,i}(t)=col(g_{l,i}^1(t),\dots,g_{l,i}^{N_l}(t)), \end{equation}
where $g_{l,i}^r(t)=(w_{l,i}^r(t)+h_{l,i}^r(t))/N_l$ and $w_{l,i}^r(t)$ is a subgradient of $\widetilde{U}_{l,i}$ according to 
\begin{equation*}
w_{1,i}^r(t)\in\partial_1  \widetilde{U}_{1,i}(\sigma_{1,i}(t),\hat{\zeta}_{1,i}(t),\theta_1^r),\quad
w_{2,i}^r(t)\in\partial_2  \widetilde{U}_{2,i}(\hat{\zeta}_{2,i}(t),\sigma_{2,i}(t),\theta_2^r), 
\end{equation*}
and $h_{l,i}^r(t)= h_l(\sigma_{l,i}^r(t))$, $r\in\{1,\dots,N_l\}$. Define $$\bar{M}_{l,k}(t)=\begin{bmatrix}A_{l,k}(t)&0\\I-A_{l,k}(t)&B_{l,k}(t)\end{bmatrix},\ F=\begin{bmatrix}0&I\\0&-I\end{bmatrix}.$$ Denote the state of subnetwork $\Xi_l$ in the $k$-th sub-problem $(k\in\{1,\dots,N_lm_l\})$ by $z_{l,k}(t)=col([\sigma_{l,1}(t)]_k,\dots,$
$[\sigma_{l,n_l}(t)]_k,[s_{l,1}(t)]_k,\dots,[ s_{l,n_l}(t)]_k)$. Here we give a detailed version of Algorithm \ref{alg_main}. 
\begin{algorithm}[!h]
\renewcommand{\thealgorithm}{4.1}
\caption{(a detailed version)}
\begin{algorithmic}
\STATE \textbf{Initialization}: For $l\in\{1,2\}$: let $\sigma_{l,i}(0)= s_{l,i}(0)=\hat{\zeta}_{3-l,j}(0)$$=x_{l0}\in\widetilde{\Sigma}_l$ for each $i\in\{1,\dots,n_l\}$ and $j\in\{1,\dots,n_{3-l}\}$. 	
\STATE \textbf{Discretization}: For $l\in\{1,2\}$, take $N_l$ points from the type set $\Theta_l$ as \eqref{eq_dispoints}.
\STATE Iterate until $t\geq T$:
\STATE \textbf{Communication}: Agent $v_l^i\in\mathcal{V}_l$ communicates and makes estimations $\hat{\sigma}_{l,i}(t)$ and $\hat{\zeta}_{l,i}(t)$ of both subnetworks respectively based on \hyperref[scheme]{communication scheme}.
\STATE \textbf{Update}: Agent $v_l^i$ evaluate the subgradients $g_{l,i}(t)$ based on \eqref{eq_subgrad}, and updates $\sigma_{l,i}(t)$ and $ s_{l,i}(t)$ for each $k\in\{1,\dots,N_lm_l\}$ by  
\begin{align}
\notag[z_{l,k}(t+1)]_i=&\sum_{j=1}^{2n_l}\big([\bar{M}_{l,k}(t)]_{ij}[z_{l,k}(t)]_j+1_{\{t\bmod \mathcal{R}=\mathcal{R}-1\}}\eta[F]_{ij} [z_{l,k}(\mathcal{R}\lfloor t/\mathcal{R}\rfloor)]_j\big)\\\label{eq_update_rule1}&-1_{\{t\bmod \mathcal{R}=\mathcal{R}-1\}}\alpha(\lfloor t/\mathcal{R}\rfloor)[g_{l,i}(\mathcal{R}\lfloor t/\mathcal{R}\rfloor)]_k,
\end{align}
\end{algorithmic}
\end{algorithm}

Define $M_{l,k}(q+1:q)=\bar{M}_{l,k}(q\mathcal{R})\cdots\bar{M}_{l,k}(q(\mathcal{R}+1)-1)+\eta F$, and $M_{l,k}(q_2:q_1)=M_{l,k}(q_1+1:q_1)\cdots M_{l,k}(q_2:q_2-1)$ ($q_1<q_2$). The parameters $\alpha(t)$ and $\eta$ in Algorithm \ref{alg_main} satisfy the following properties.
\begin{enumerate}\renewcommand\labelenumi{(\alph{enumi})}
\item $\{\alpha(t)\}_{t=0}^{\infty}$ is a positive non-increasing sequence satisfying $\sum_{t=0}^\infty \alpha(t)=\infty$ and $\sum_{t=0}^{\infty}\alpha^2(t)<\infty$;
\item $\eta\in(0,\Upsilon)$, where $\Upsilon=\min_l\{1/(20+8n_l)^{n_l}(1-|\lambda_{3}|)^{n_l}\}$, and $\lambda_3$ is the third largest eigenvalue of $M_{l,k}(q+1:q)$ when $\eta=0$.
\end{enumerate}
The above parameter settings are also similarly investigated in distributed subgradient algorithms \cite{nedic2015,rams2009} and distributed surplus-based algorithms \cite{kai2014,chen2021}.

Note that the stepsize we adopt in Algorithm \ref{alg_main} is $o(1/\sqrt{t})$. Following the subgradient method \cite{nedic2015}, we adjust the stepsize to $\alpha(t)=1/\sqrt{t}$ in the following theorem to obtain the convergence rate. Since the stepsize which decays faster usually brings slower convergence rate, the adjustment can help us analyze the convergence rate, even though the algorithm at this stepsize does not converge.
\begin{theorem}[Convergence rate]\label{thm_conv_rate}Take the stepsize $\alpha(t)=1/\sqrt{t}$ for $t\geq0$. Under Assumption \ref{ass_game}, Algorithm \ref{alg_main} attains a convergence rate $O(\ln T/\sqrt{T})$.
\end{theorem}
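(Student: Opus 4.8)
The plan is to recast Algorithm~\ref{alg_main} at the level of epochs of length $\mathcal{R}$, since by \eqref{eq_update_rule1} the subgradient and surplus corrections are applied only when $t\bmod\mathcal{R}=\mathcal{R}-1$. Writing $q=\lfloor t/\mathcal{R}\rfloor$ and stacking each scalar sub-problem through the aggregated transition matrix $M_{l,k}(q+1:q)$, the dynamics of the state-plus-surplus vector $z_{l,k}$ reduce to a perturbed linear recursion driven by the subgradient steps $\alpha(q)g_{l,i}$. I would then measure progress through the saddle-point duality gap evaluated at the running weighted average $\hat{\sigma}_l^{T}=\big(\sum_{q}\alpha(q)\bar{\sigma}_l(q)\big)/\sum_q\alpha(q)$, which is the natural convergence certificate for the zero-sum structure $f_1+f_2=0$. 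The total error splits into a \emph{consensus} part (how far the individual strategies $\sigma_{l,i}$ and the inter-subnetwork estimates $\hat{\zeta}_{l,i}$ lie from the average $\bar{\sigma}_l$) and an \emph{optimization} part (how far $\bar{\sigma}_l$ lies from the DBNE), each bounded separately and then recombined.

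For the consensus part, the first step is to establish geometric mixing of the surplus matrices. Using Proposition~\ref{prop_compress_graph}, which guarantees that each scalar graph $\mathcal{G}_{l,k}$ is $\mathcal{R}$-jointly strongly connected and that every inter-subnetwork neighbor set is nonempty within $\mathcal{S}$ epochs, together with the admissible range $\eta\in(0,\Upsilon)$ chosen via the third eigenvalue $\lambda_3$, I would show that the products $M_{l,k}(q_2:q_1)$ converge to a rank-one limit at a geometric rate $O(\gamma^{\,q_2-q_1})$ for some $\gamma<1$, and that the $\eta F$ correction preserves the column sums so that $\bar{\sigma}_l(q)$ evolves as an averaged subgradient iterate. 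Feeding this contraction into the bookkeeping of Lemma~\ref{prop_compress}, its hypotheses are verified with $\varepsilon_0=O(\alpha(q))$ and $\varepsilon_1=O(\alpha(q))$, since a single subgradient step moves the average by at most $O(\alpha(q))$; consequently the consensus error obeys $\lVert\sigma_{l,i}(q)-\bar{\sigma}_l(q)\rVert=O(\alpha(q))$ and the estimation bias $\lVert\hat{\zeta}_{l,i}(q)-\bar{\sigma}_{3-l}(q)\rVert=O(\mathcal{S}\alpha(q))$, both of order $O(1/\sqrt{q})$ under the prescribed stepsize.

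For the optimization part, I would derive a one-epoch inequality for the squared distance of the average to the DBNE. Expanding $\lVert\bar{\sigma}_l(q+1)-\widetilde{\sigma}_l^*\rVert^2$, the convexity of $\widetilde{U}_l+H_l$ (with $g_{l,i}$ from \eqref{eq_subgrad} and the $E_l$-Lipschitz penalty of Proposition~\ref{prop_penalty}) yields the usual cross term $-2\alpha(q)\langle g_{l,i},\bar{\sigma}_l-\widetilde{\sigma}_l^*\rangle$, a quadratic term $\alpha^2(q)\lVert g_{l,i}\rVert^2=O(\alpha^2(q))$ from the bounded subgradients, and an inexactness term of order $\alpha(q)\,\varepsilon_0$ arising from replacing the true opponent strategy by $\hat{\zeta}_{l,i}$. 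Summing the two subnetworks' inequalities and invoking the zero-sum identity collapses the cross terms into the duality gap at $\bar{\sigma}(q)$, while $\mu$-strong convexity from Assumption~\ref{ass_game}(iii) keeps the distance recursion contractive. Telescoping over $q=1,\dots,Q$ with $Q=\lfloor T/\mathcal{R}\rfloor$ and dividing by $\sum_{q}\alpha(q)$ gives a gap bound of the form $\big(C_0+C_1\sum_q\alpha^2(q)+C_2\sum_q\alpha(q)\varepsilon_0\big)/\sum_q\alpha(q)$; substituting $\sum_q\alpha(q)=\Theta(\sqrt{Q})$, $\sum_q\alpha^2(q)=\Theta(\ln Q)$, and $\varepsilon_0=O(\alpha(q))$ then yields the advertised rate $O(\ln Q/\sqrt{Q})=O(\ln T/\sqrt{T})$.

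The hard part will be the coupling between the consensus analysis and the optimization descent through the inexact subgradients. Unlike a plain minimization, here each subnetwork's subgradient is evaluated at a compressed, stale estimate $\hat{\zeta}_{l,i}$ of the \emph{opponent}, so the error $\varepsilon_0$ enters multiplicatively against the stepsize and must be shown to aggregate to no worse than $\sum_q\alpha^2(q)=O(\ln Q)$ rather than spoiling the $\sqrt{Q}$ growth of $\sum_q\alpha(q)$. Establishing the contraction rate $\gamma$ of the non-doubly-stochastic surplus products $M_{l,k}$ uniformly over all scalar sub-problems $k$, and confirming that the $\eta F$ correction restores the average-preservation required by Lemma~\ref{prop_compress}, is the second delicate point, since the sparsification destroys symmetry and the bound must hold within the worst-case joint-connectivity window guaranteed by Proposition~\ref{prop_compress_graph}.
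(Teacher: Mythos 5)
Your proposal is correct and follows essentially the same route as the paper: an epoch-level reduction via the products $M_{l,k}(q+1:q)$, a consensus/optimization split in which the weighted consensus error aggregates as $\sum_{q}\alpha(q)\varepsilon_0=O\big(\sum_q \alpha^2(q)\big)=O(\ln T_0)$ (this is exactly the paper's bound \eqref{eq_con_rate}), and the zero-sum collapse of the telescoped Lyapunov inequality, after which $\sum_q\alpha(q)=\Theta(\sqrt{T_0})$ and $\sum_q\alpha^2(q)=\Theta(\ln T_0)$ yield $O(\ln T/\sqrt{T})$. The only cosmetic deviations are that the paper certifies the best iterate through $E\widetilde{U}_{\min}(\sigma_2^*)$ and $E\widetilde{U}_{\max}(\sigma_1^*)$ rather than your running weighted average (the two are equivalent by convexity of $E\widetilde{U}$ in each argument), and that your appeal to $\mu$-strong convexity is superfluous: plain convexity with bounded subgradients suffices, and with stepsize $1/\sqrt{t}$ the distance recursion is not actually contractive, nor does the paper's rate argument need it to be.
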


Theorem \ref{thm_conv_rate} shows that the convergence rate of Algorithm \ref{alg_main} is consistent with existing distributed algorithms \cite{chen2021,nedic2015}, that is to say, the compression does not affect the order of magnitude of the convergence rate.

\section{Convergence analysis}\label{sec_conv}
In this section, we prove the convergence of Algorithm \ref{alg_main} and provide its convergence rate, namely we present the detailed proof of Theorem \ref{thm_conv} and Theorem \ref{thm_conv_rate}. We first provide the following lemmas on surplus-based algorithms and convergence analysis.
\begin{lemma}\label{lem_consensus}{\rm\cite{kai2014}}.
For $k=1,\dots,N_lm_l$, there exists $\Gamma_l=\sqrt{2n_l m_l}>0$ such that 
\begin{equation*}\lVert M_{l,k}(q:0)-\frac{1}{n_l}[1^T\ 0^T]^T[1^T\ 1^T]\rVert_\infty\leq \Gamma_l\xi_l^q, \end{equation*}
where $\xi_l=\max |\lambda_2(M_{l,k}(q+1:q))|$ and $\lambda_2$ is the second largest eigenvalue of $M_{l,k}(q+1:q)$.
\end{lemma}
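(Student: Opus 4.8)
The plan is to recognize the target matrix $P:=\frac{1}{n_l}[1^T\ 0^T]^T[1^T\ 1^T]$ as the common rank-one spectral projection of each block-transition matrix $M_{l,k}(q+1:q)$ onto its eigenvalue-$1$ eigenspace, and then to exploit the fact that \emph{every} super-matrix shares this same projection in order to telescope the product. First I would verify the common eigenvector structure. By the surplus-based construction, $A_{l,k}(t)$ is row-stochastic and $B_{l,k}(t)$ is column-stochastic; together with $F[1^T\ 0^T]^T=0$ and $[1^T\ 1^T]F=0$, a direct computation on the block form $\bar M_{l,k}(t)=\left[\begin{smallmatrix}A_{l,k}&0\\ I-A_{l,k}&B_{l,k}\end{smallmatrix}\right]$ gives, for every $q$,
\[
M_{l,k}(q+1:q)\,[1^T\ 0^T]^T=[1^T\ 0^T]^T,\qquad [1^T\ 1^T]\,M_{l,k}(q+1:q)=[1^T\ 1^T].
\]
Consequently $P\,M_{l,k}(q+1:q)=M_{l,k}(q+1:q)\,P=P$ and $P^2=P$.

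Next I would telescope. From $PM=MP=P=P^2$ one gets $(M_{l,k}(j+1:j)-P)(M_{l,k}(j+2:j+1)-P)=M_{l,k}(j+2:j)-P$, hence by induction
\[
M_{l,k}(q:0)-P=\prod_{j=0}^{q-1}\bigl(M_{l,k}(j+1:j)-P\bigr).
\]
The eigenvalues of each factor $M_{l,k}(j+1:j)-P$ are those of $M_{l,k}(j+1:j)$ with the eigenvalue $1$ deleted, so its spectral radius is at most $\xi_l$. I would then bound the product geometrically by working in a common norm adapted to the invariant complement $\mathrm{range}(I-P)$, on which the finitely many admissible super-matrices act as simultaneous contractions with factor $\xi_l$; this yields a bound of order $\xi_l^q$, and converting from this norm to the matrix $\infty$-norm for the $2n_l$-dimensional blocks, with a dimension-dependent equivalence constant taken uniformly over the $N_lm_l$ sub-problems, produces the stated factor $\Gamma_l=\sqrt{2n_lm_l}$.

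The remaining ingredient, and the crux of the argument, is to certify that $\xi_l<1$ uniformly. Here I would invoke Proposition \ref{prop_compress_graph}(a): the $\mathcal{R}$-joint strong connectivity of $\mathcal{G}_{l,k}(t)$ makes each product $\bar M_{l,k}(q\mathcal{R})\cdots\bar M_{l,k}(q(\mathcal{R}+1)-1)$ primitive, so that at $\eta=0$ the eigenvalue $1$ has multiplicity two — one contributed by the row-stochastic factor $\prod A_{l,k}$ and one by the column-stochastic factor $\prod B_{l,k}$ — with all other eigenvalues strictly inside the unit disk. A perturbation analysis in $\eta$ then shows this doubled eigenvalue splits: one branch is pinned at $1$ (its right eigenvector $[1^T\ 0^T]^T$ is preserved exactly) while the other is pushed strictly inside the disk. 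The admissible range $\eta\in(0,\Upsilon)$, with $\Upsilon$ governed by the gap $1-|\lambda_3|$ to the third eigenvalue, is precisely what keeps the perturbed second eigenvalue inside the open unit disk, so that $\xi_l<1$.

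The main obstacle I anticipate is the non-normality of the surplus matrices: the spectral radius of a single factor does not by itself control its operator norm, so passing from the per-factor spectral bound to the product estimate $\Gamma_l\xi_l^q$ requires exhibiting one common norm in which all admissible super-matrices contract simultaneously by $\xi_l$ on $\mathrm{range}(I-P)$. Establishing such a norm — using that the deterministic sparsification pattern together with the time-varying topologies admits only finitely many distinct configurations — and verifying that the spectral gap is uniform under the joint connectivity is exactly the technical content drawn from the surplus-based consensus analysis of \cite{kai2014}; this is where Proposition \ref{prop_compress_graph} and the bound on $\eta$ are indispensable.
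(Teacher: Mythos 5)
First, a point of reference: the paper contains no proof of Lemma \ref{lem_consensus} — it is imported verbatim from the surplus-based consensus analysis of \cite{kai2014} — so there is no in-paper argument to compare yours against, and your attempt must stand or fall on its own. Judged that way, your skeleton is sound as far as it goes. The invariance relations $M_{l,k}(q+1:q)\,[1^T\ 0^T]^T=[1^T\ 0^T]^T$ and $[1^T\ 1^T]\,M_{l,k}(q+1:q)=[1^T\ 1^T]$ do hold (row-stochasticity of $A_{l,k}$, unit column sums of the block matrix $\bar M_{l,k}$, and $F[1^T\ 0^T]^T=0$, $[1^T\ 1^T]F=0$), the telescoping identity $M_{l,k}(q:0)-P=\prod_{j}\bigl(M_{l,k}(j+1:j)-P\bigr)$ is correct given $PM=MP=P=P^2$, and your $\eta$-perturbation picture — a doubled eigenvalue $1$ at $\eta=0$ under the joint connectivity of Proposition \ref{prop_compress_graph}(a), splitting for $0<\eta<\Upsilon$ into one branch pinned at $1$ and one pushed strictly inside the unit disk — is exactly the mechanism used in \cite{kai2014}.

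The genuine gap is the step you yourself flag and then defer: passing from the per-factor spectral bound $\rho\bigl(M_{l,k}(j+1:j)-P\bigr)\le\xi_l$ to the product bound $\Gamma_l\xi_l^q$. Finiteness of the admissible factor set together with each factor having spectral radius at most $\xi_l<1$ does \emph{not} imply the existence of a common norm in which all factors contract by $\xi_l$ on $\mathrm{range}(I-P)$; that is a joint-spectral-radius assertion, and the joint spectral radius of a finite family can strictly exceed the maximum of the individual spectral radii (take $E_{12}$ and $E_{21}$: each is nilpotent with spectral radius $0$, yet $E_{12}E_{21}=E_{11}$, so no bound of the form $C\xi^q$ with $\xi=\max\rho=0$ can hold, and no common contraction norm exists). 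So the sentence ``working in a common norm adapted to the invariant complement'' is precisely the content of the cited lemma rather than a consequence of anything you established; closing it requires the quantitative time-varying product analysis of \cite{kai2014}, not a norm-equivalence remark. Relatedly, the constant $\Gamma_l=\sqrt{2n_lm_l}$ is never actually derived: your conversion to the $\infty$-norm for $2n_l\times 2n_l$ matrices would produce a dimension factor $\sqrt{2n_l}$, and the extra $m_l$ — which counts stacked sub-problems, not the matrix dimension of any single sub-problem — cannot emerge from the argument you sketch. As written, the proposal is a faithful reconstruction of the easy half of the Cai--Ishii analysis with the hard half cited back to \cite{kai2014}, which mirrors what the paper itself does but does not constitute a proof.
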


\begin{lemma}\label{lem_conv}{\rm\cite{Polyak1987}}. Let $\{a_t\}$, $\{b_t\}$, and $\{c_t\}$ be non-negative sequences satisfying $\sum_{t=0}^\infty b_t<\infty$. If $a_{t+1}\leq a_t+b_t-c_t$ for any $t$, then $a_t$ converges to a finite number and $\sum_{t=0}^T c_t<\infty$.
\end{lemma}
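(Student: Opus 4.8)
The plan is to turn the near-monotone recurrence $a_{t+1}\leq a_t+b_t-c_t$ into a genuinely monotone one by absorbing the summable perturbation $\{b_t\}$ into the state. Since $\sum_{t=0}^\infty b_t<\infty$, the tails $\beta_t:=\sum_{s=t}^\infty b_s$ are finite, non-negative, vanish in the limit ($\beta_t\to0$), and satisfy $\beta_{t+1}=\beta_t-b_t$. I would then introduce the auxiliary sequence $v_t:=a_t+\beta_t$, which is non-negative because both $a_t\geq0$ and $\beta_t\geq0$. Substituting the hypothesis into $v_{t+1}$ gives $v_{t+1}=a_{t+1}+\beta_{t+1}\leq(a_t+b_t-c_t)+(\beta_t-b_t)=v_t-c_t$, so that $v_{t+1}\leq v_t-c_t\leq v_t$ using $c_t\geq0$. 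Hence $\{v_t\}$ is non-increasing.

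For the first claim (convergence of $a_t$), I would observe that $\{v_t\}$ is non-increasing and bounded below by $0$, hence converges to a finite limit $v_\infty\geq0$ by the monotone convergence theorem for real sequences. Because $\beta_t\to0$, the original sequence satisfies $a_t=v_t-\beta_t\to v_\infty$, so $a_t$ converges to the finite number $v_\infty$. This is precisely the point where the vanishing of the tail $\beta_t$ is used to transfer convergence from the monotone surrogate $v_t$ back to $a_t$.

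For the second claim, I would telescope the inequality $c_t\leq v_t-v_{t+1}$. Summing over $t=0,\dots,T$ yields $\sum_{t=0}^T c_t\leq v_0-v_{T+1}\leq v_0=a_0+\beta_0<\infty$, a bound uniform in $T$; letting $T\to\infty$ gives $\sum_{t=0}^\infty c_t<\infty$.

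The only real idea in the argument is the tail-sum shift $v_t=a_t+\beta_t$, which trades the additive error term $b_t$ for monotonicity; everything else reduces to the monotone convergence theorem and a telescoping estimate. The main point to verify carefully is that $\beta_t$ is well-defined, finite, and tends to zero, all of which follow directly from $\sum_{t=0}^\infty b_t<\infty$, and that the identity $\beta_{t+1}=\beta_t-b_t$ is what makes the perturbation cancel cleanly in the recurrence for $v_t$. I do not anticipate a substantive obstacle, as this is an elementary deterministic supermartingale-type convergence result.
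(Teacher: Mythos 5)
Your proof is correct and complete: the tail-shift $v_t=a_t+\beta_t$ with $\beta_t=\sum_{s=t}^\infty b_s$ makes the sequence monotone, and the convergence of $a_t$ and the telescoped bound $\sum_{t=0}^\infty c_t\leq a_0+\beta_0$ both follow cleanly. The paper itself offers no proof --- it cites this as a known result from Polyak's \emph{Introduction to Optimization} --- and your argument is precisely the standard one given there for this deterministic Robbins--Siegmund-type lemma (note only that the paper's ``$\sum_{t=0}^T c_t<\infty$'' is evidently a typo for the infinite sum, which is exactly what you prove).
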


With the above two lemmas and the results developed in Section \ref{sec_algana}, we can carry forward the proof of the main results in the following.
\begin{proof}[Proof of Theorem \ref{thm_conv}]
Recalling Lemma \ref{thm_bne} that the DBNE of $\widetilde{G}$ is an approximate BNE of the continuous model with an explicit error bound, we only need to prove that Algorithm \ref{alg_main} generates a sequence that converges to the DBNE of $\widetilde{G}$. To this end, we show that agents reach consensus through compressed communication and the average state $\bar{\sigma}_l(t)$ converges to the equilibrium strategy $\widetilde{\sigma}^*$ of $\widetilde{G}$.

Firstly, we show that agents can reach consensus, \textit{i.e.}, agents' states $\sigma_{l,i}(t)$ converge to the average state $\bar{\sigma}_l(t)$ and the surplus vectors $s_{l,i}(t)$ vanish. From the update rule \eqref{eq_update_rule1}, for each $l\in\{1,2\}$, $i\in\{1,\dots,2n_l\}$, $k\in\{1,\dots,N_lm_l\}$, and $q\geq1$, the vector $z_{l,k}(q\mathcal{R})$ can be reconstructed as 
\begin{equation*}
\begin{aligned}
[z_{l,k}(q\mathcal{R})]_i&=\sum_{j=1}^{2n_l}[M_{l,k}(q:0)]_{ij}[z_{l,k}(0)]_j\\\notag-&\sum_{r=0}^{q-2}\sum_{j=1}^{n_l}[M_{l,k}(k-1:r)]_{ij}\alpha(r)[g_{l,j}(r\mathcal{R})]_k-\alpha(q-1)[g_{l,i}((q-1)\mathcal{R})]_k.
\end{aligned} 
\end{equation*}
Because the column sum in $M_{l,k}(q_2:q_1)$ equal to 1 for any $q_2>q_1$, the average state $\bar{\sigma}_l(q)$ can be represented as 
\begin{equation*}
[\bar{\sigma}_l(q\mathcal{R})]_k=\frac{1}{n_l}\sum_{j=1}^{2n_l}\bigg([z_{l,k}(0)]_j-\sum_{r=0}^{q-2}\alpha(r)[g_{l,j}(r\mathcal{R})]_k\bigg)-\frac{1}{n_l}\sum_{j=1}^{n_l}\alpha(q-1)[g_{l,j}((q-1)\mathcal{R})]_k.
\end{equation*}
Note that $[\sigma_{l,i}(t)]_k=[z_{l,k}(t)]_i$ and $[s_{l,i}(t)]_k=[z_{l,k}(t)]_{n_l+i}$. According to Lemma \ref{lem_consensus}, we combine the above two equations as 
\begin{equation*}
\begin{aligned}
\lVert [\bar{\sigma}_l(q\mathcal{R})]_k-[\sigma_{l,i}(q\mathcal{R})]_k\rVert
\leq&\Gamma_l\xi_l^{q}\sum_{i=1}^{2n_l}\lVert [z_{l,k}(0)]_i\rVert+\Gamma_l \sum_{r=0}^{q-2}\xi_l^{q-r-1}\alpha(r)\sum_{j=1}^{n_l}\lVert [g_{l,j}(r\mathcal{R})]_k\rVert
\\&+\alpha(q-1)\lVert[g_{l,i}((q-1)\mathcal{R})]_k-\frac{1}{n_l}\sum_{j=1}^{n_l}[g_{l,j}((q-1)\mathcal{R})]_k\rVert.
\end{aligned} \end{equation*}
Define $D_l=L_{l,l}+E_l$. The subgradient satisfies $\lVert g_{l,i}(t)\rVert \leq (L_{l,l}+E_l)/\sqrt{N_l}=D_l/\sqrt{N_l}$. Since $\sum_{k=1}^{n}\lVert [x]_k\rVert^2=\lVert x\rVert^2$ for $x\in\mathbb{R}^{n}$ and $\sqrt{n}\lVert x\rVert\geq\sum_{k=1}^{n}\lVert [x]_k\rVert \geq \lVert x\rVert$, we obtain 
\begin{equation*}
\begin{aligned}
\lVert \bar{\sigma}_l(q\mathcal{R})-\sigma_{l,i}(q\mathcal{R})\rVert&\leq\sum_{k=1}^{N_lm_l}\lVert [\bar{\sigma}_l(q\mathcal{R})]_k-[\sigma_{l,i}(q\mathcal{R})]_k\rVert\\
&\leq \Gamma_lR_l\xi_l^{q}+D_{l}\sqrt{m_l}\left ( n_l\Gamma_l \sum_{r=0}^{q-2}\xi_l^{q-r-1}\alpha(r)+2\alpha(q-1)\right),
\end{aligned} 
\end{equation*}
where $R_l=\sum_{i=1}^{2n_l}\sum_{k=1}^{N_lm_l}\lVert [z_{l,k}(0)]_i\rVert$. Similarly, 
\begin{equation*}
\lVert  s_{l,i}(q\mathcal{R}) \rVert
\leq\Gamma_lR_l\xi_l^{q}+ \Gamma_l D_{l}\sqrt{m_l}n_l\sum_{r=0}^{q-2}\xi_l^{q-r-1}\alpha(r).
\end{equation*}
Furthermore, denote $\alpha(q)=0$ and $\xi^q=0$ for all $q<0$. Then 
\begin{equation*}
\begin{aligned}
&\sum_{q=0}^{\infty}\alpha(q)\lVert \bar{\sigma}_l(q\mathcal{R})-\sigma_{l,i}(q\mathcal{R})\rVert\\
\leq &\sum_{q=0}^{\infty}\bigg(\Gamma_lR_l\xi_l^{q}\alpha(q)+D_{l}\sqrt{m_l}\bigg(n_l\Gamma_l\sum_{r=0}^{q-2}\xi_l^{q-r-1}\alpha(r)\alpha(q)+2\alpha(q-1)\alpha(q)\bigg)\bigg).
\end{aligned} 
\end{equation*}
Applying the inequality $ab\leq (a+b)^2/2$, for any $a,b\in\mathbb{R}$, 
\begin{equation*}\sum_{r=0}^{q-2}\xi_l^{q-r-1}\alpha(r)\alpha(q)\leq \sum_{r=0}^{q-2}\frac{1}{2}\xi_l^{q-r-1}(\alpha(r)^2+\alpha(q)^2). \end{equation*}
Then 
\begin{align}\label{eq_con_rate}
&\sum_{q=0}^{\infty}\alpha(q)\lVert \bar{\sigma}_l(q\mathcal{R})-\sigma_{l,i}(q\mathcal{R})\rVert\\\notag\leq &\sum_{q=0}^{\infty}\bigg(\Gamma_lR_l\xi_l^{q}\alpha(q)+D_{l}\sqrt{m_l}\bigg(n_l\Gamma_l\sum_{r=0}^{q-2}\xi_l^{q-r-1}\alpha^2(r)+\alpha^2(q)+2\alpha^2(q-1)\bigg)\bigg)
\\\notag\leq&\frac{\Gamma_lR_l\alpha(0)}{1-\xi_l}+D_{l}\sqrt{m_l}\left( \frac{n_l\Gamma_l\xi_l}{1-\xi_l}+2\right)\sum_{q=0}^\infty \alpha^2(q)<\infty.
\end{align}
Similarly, 
\begin{equation*}
\begin{aligned}
\sum_{q=0}^{\infty}\alpha(q)\lVert s_{l,i}(q\mathcal{R})\rVert&\leq \sum_{q=0}^{\infty}\bigg(\Gamma_lR_l\xi_l^{q}\alpha(q)+\Gamma_lD_{l}\sqrt{m_l}n_l\sum_{r=0}^{q-2}\xi_l^{q-r-1}\alpha(r)\alpha(q)\bigg)\\ 
&\leq \frac{\Gamma_l}{1-\xi_l}R_l\alpha(0)+\Gamma_lD_{l}\sqrt{m_l}n_l\frac{\xi_l}{1-\xi_l}\sum_{q=0}^\infty \alpha^2(q)<\infty.
\end{aligned} \end{equation*}
Since $\sum_{q=0}^{\infty}\alpha(q)=\infty$ and $\sum_{q=0}^{\infty}\alpha^2(q)<\infty$, as $q$ tends to infinity, the sequence $\sigma_{l,i}(q\mathcal{R})$ converges to the average state $\bar{\sigma}_l(q\mathcal{R})$, while the surplus vector $s_{l,i}(q\mathcal{R})$ vanishes. 

We have analyzed the consensus for $t=q\mathcal{R}$, and in the following we consider the case for any $t\geq0$. For $t\in[q\mathcal{R},(q+1)\mathcal{R}-1)$ there exists a matrix $A\in\mathbb{R}^{n_l\times n_l}$ such that $\sigma_{l,i}(t)=\sum_{j=1}^{n_l} [A]_{ij}\sigma_{l,j}(q\mathcal{R}),$ where $\sum_{j=1}^{n_l}[A]_{ij}=1$ for any $i\in\{1,\dots,n_l\}$. Moreover, the average state $\bar{\sigma}_l(t)$ remains unchanged for $t\in[q\mathcal{R},(q+1)\mathcal{R}-1)$, $q\geq0$. Then 
\begin{equation*}
\lVert \bar{\sigma}_l(t)-\bar{\sigma}_l(t)\rVert \leq \sum_{j=1}^{n_l}[A]_{ij}\lVert \bar{\sigma}_l(q\mathcal{R})-\sigma_{l,j}(q\mathcal{R})\rVert\leq\max_{j}\lVert \bar{\sigma}_l(q\mathcal{R})-\sigma_{l,j}(q\mathcal{R})\rVert. 
\end{equation*}
Similarly, $\lVert s_{l,i}(t)\rVert\leq\max_j \lVert s_{l,i}(t)\rVert$ for $t\in[q\mathcal{R},(q+1)\mathcal{R}-1)$, $q\geq0$. Therefore, as $t$ tends to infinity, the sequence $\sigma_{l,i}(t)$ converges to the average state $\bar{\sigma}_l(t)$, and the surplus vectors $s_{l,i}(t)$ vanish.

Secondly, we show that the average state $\bar{\sigma}_{l,i}(t)$ converges to the DBNE in the zero-sum condition. So far, we have obtained an error bound between each agent's state $\sigma_{l,i}(t)$ and the average state $\bar{\sigma}_{l,i}(t)$. Since the average state $\bar{\sigma}_l(t)$ remains unchanged for $t\in[q\mathcal{R},(q+1)\mathcal{R}-1)$, $q\geq0$, we only need to show the convergence of $\bar{\sigma}_{l,i}(q\mathcal{R})$. According to the update rule, the update of the average state is 
\begin{equation*}
\bar{\sigma}_l((q+1)\mathcal{R})=\bar{\sigma}_l(q\mathcal{R})-\frac{\alpha(q)}{n_l}\sum_{i=1}^{n_l} g_{l,i}(q\mathcal{R}). 
\end{equation*}
Thus, 
\begin{equation*}\begin{aligned}
\lVert \bar{\sigma}_l((q+1)\mathcal{R})-\widetilde{\sigma}_l^*\rVert^2=&\lVert \frac{\alpha(q)}{n_l}\sum_{i=1}^{n_l} g_{l,i}(q\mathcal{R})\rVert^2+\lVert \bar{\sigma}_l(q\mathcal{R})-\widetilde{\sigma}_l^*\rVert^2\\&-\frac{2\alpha(q)}{n_l}\sum_{i=1}^{n_l}\left<\bar{\sigma}_l(q\mathcal{R})-\widetilde{\sigma}_l^*,g_{l,i}(q\mathcal{R})\right>.
\end{aligned} \end{equation*}
Consider the following Lyapunov candidate function 
\begin{equation*}
V(q)=\sum_{l=1}^2\lVert \bar{\sigma}_l(q\mathcal{R})-\widetilde{\sigma}_l^*\rVert^2. 
\end{equation*}
Then  
\begin{equation}\label{eq_lyp_update}
V(q+1)\leq V(q)+\left(\frac{D_1^2}{N_1}+\frac{D_2^2}{N_2}\right)\alpha^2(q)+\sum_{l=1}^2\frac{2\alpha(q)}{n_l}\sum_{i=1}^{n_l}\left<\widetilde{\sigma}_l^*-\bar{\sigma}_l(q\mathcal{R}),g_{l,i}(q\mathcal{R})\right>. 
\end{equation}
With Lemma \ref{lem_conv}, we only need to show that terms in \eqref{eq_lyp_update} satisfy the conditions in Lemma \ref{lem_conv}. It follows from $\sum_{q=0}^{\infty}\alpha^2(q)<\infty$ that the first term satisfies  
\begin{equation*}
\sum_{q=0}^{\infty} \left(\frac{D_1^2}{N_1}+\frac{D_2^2}{N_2}\right)\alpha^2(q)<\infty. 
\end{equation*}
Based on the property of subgradients, for $r\in\{1,\dots,N_l\}$,%
\begin{equation*}\begin{aligned}
&N_1\left<\widetilde{\sigma}_1^{*r}-\bar{\sigma}^r_1(q\mathcal{R}),g_{1,i}^r(q\mathcal{R})\right>\\
=&N_1\left(\left<\widetilde{\sigma}_1^{*r}-\sigma_{1,i}^r(q\mathcal{R}),g_{1,i}^r(q\mathcal{R})\right>+\left<\sigma_{1,i}^r(q\mathcal{R})-\bar{\sigma}_1^r(q\mathcal{R}),g_{1,i}^r(q\mathcal{R})\right>\right)\\
\leq& D_1\lVert \bar{\sigma}_1^r(q\mathcal{R})-\sigma_{1,i}^r(q\mathcal{R})\rVert +\widetilde{U}_{1,i}(\widetilde{\sigma}_1^*,\hat{\zeta}_{1,i}(q\mathcal{R}),\theta_1^r)-(\widetilde{U}_{1,i}(\sigma_{1,i}(q\mathcal{R}),\hat{\zeta}_{1,i}(q\mathcal{R}),\theta_1^r)+H_1(\sigma_{1,i}^r(q\mathcal{R})))\\
\leq& 2D_1\lVert \bar{\sigma}_1^r(q\mathcal{R})-\sigma_{1,i}^r(q\mathcal{R})\rVert +\widetilde{U}_{1,i}(\widetilde{\sigma}_1^*,\hat{\zeta}_{1,i}(q\mathcal{R}),\theta_1^r)-(\widetilde{U}_{1,i}(\bar{\sigma}_1(q\mathcal{R}),\hat{\zeta}_{1,i}(q\mathcal{R}),\theta_1^r)+H_1(\bar{\sigma}_1^r(q\mathcal{R}))).
\end{aligned}%
\end{equation*}
Moreover, by the Lipschitz continuity of $f_{1,i}(x_1,x_2,\theta_1,\theta_2)$ in $x_2\in\mathcal{X}_2$,%
\begin{equation*}
\begin{aligned}
&\widetilde{U}_{1,i}(\widetilde{\sigma}_1^*,\hat{\zeta}_{1,i}(q\mathcal{R}),\theta_1^r)-\widetilde{U}_{1,i}(\bar{\sigma}_l(q\mathcal{R}),\hat{\zeta}_{1,i}(q\mathcal{R}),\theta_1^r)\\
\leq&\widetilde{U}_{1,i}(\widetilde{\sigma}_1^*,\bar{\sigma}_2(q\mathcal{R}),\theta_1^r)-\widetilde{U}_{1,i}(\bar{\sigma}_1(q\mathcal{R}),\bar{\sigma}_2(q\mathcal{R}),\theta_1^r)+2L_{1,2}\lVert \hat{\zeta}_{1,i}(q\mathcal{R})-\bar{\sigma}_2(q\mathcal{R})\rVert.
\end{aligned}%
\end{equation*}
Since $\left<\widetilde{\sigma}_l^*-\bar{\sigma}_l(q\mathcal{R}),g_{l,i}(q\mathcal{R})\right>=\sum_{r=1}^{N_l}\left<\widetilde{\sigma}_l^{*r}-\bar{\sigma}^r_l(q\mathcal{R}),g_{1,i}^r(q\mathcal{R})\right>$, we can rewrite the last term of \eqref{eq_lyp_update} as%
\begin{align}\label{eq_lyp_lastterm}
&\sum_{l=1}^2 \frac{2\alpha(q)}{n_l}\sum_{i=1}^{n_l}\left<\widetilde{\sigma}_l^*-\bar{\sigma}_l(q\mathcal{R}),g_{l,i}(q\mathcal{R})\right>\\\notag\leq&
\sum_{l=1}^2\frac{2\alpha(q)}{N_ln_l}(2D_l\sqrt{N_l}\lVert \bar{\sigma}_l(q\mathcal{R})-\sigma_{l,i}(q\mathcal{R})\rVert+2N_lL_{l,3-l}\lVert \hat{\zeta}_{l,i}(q\mathcal{R})-\bar{\sigma}_{3-l}(q\mathcal{R})\rVert)\\\notag&+\frac{1}{N_1}\sum_{r=1}^{N_1}(\widetilde{U}_1(\widetilde{\sigma}_1^*,\bar{\sigma}_2(q\mathcal{R}),\theta_1^r)-\widetilde{U}_1(\bar{\sigma}_1(q\mathcal{R}),\bar{\sigma}_2(q\mathcal{R}),\theta_1^r)-H_1(\bar{\sigma}_1^r(q\mathcal{R})))\\\notag&+\frac{1}{N_2}\sum_{r=1}^{N_2}(\widetilde{U}_2(\bar{\sigma}_1(q\mathcal{R}),\sigma_2^*,\theta_2^r)-\widetilde{U}_2(\bar{\sigma}_1(q\mathcal{R}),\bar{\sigma}_2(q\mathcal{R}),\theta_2^r)-H_2(\bar{\sigma}_2^r(q\mathcal{R}))). 
\end{align}
According to Proposition \ref{prop_compress},%
\begin{equation*}
\lVert \hat{\zeta}_{3-l,i}(q\mathcal{R})-\bar{\sigma}_{l}(q\mathcal{R})\rVert\leq N_lm_l\lVert \bar{\sigma}_l(q\mathcal{R})-\sigma_{l,i}(q\mathcal{R})\rVert+\mathcal{S}D_l\sqrt{N_lm_l}\alpha(q).
\end{equation*}
Because $\sum_{q=0}^\infty \alpha(q)\lVert \bar{\sigma}_l(q\mathcal{R})-\sigma_{l,i}(q\mathcal{R})\rVert<\infty$ and $\sum_{q=0}^\infty \alpha^2(q)<\infty$, %
\begin{equation*}
\sum_{l=1}^2\frac{2\alpha(q)}{N_ln_l}(2D_l\sqrt{N_l}\lVert \bar{\sigma}_l(q\mathcal{R})-\sigma_{l,i}(q\mathcal{R})\rVert+2N_lL_{l,3-l}\lVert \hat{\zeta}_{l,i}(q\mathcal{R})-\bar{\sigma}_{3-l}(q\mathcal{R})\rVert)<\infty.%
\end{equation*}
Then we show that the remaining part of \eqref{eq_lyp_lastterm} is nonpositive.
Based on the choice of the discrete points \eqref{eq_dispoints}, the zero-sum condition in $\widetilde{G}$ is equivalent to %
\begin{equation}\label{eq_zerosum_dis}
\frac{1}{N_1}\sum_{i=1}^{N_1}\widetilde{U}_1(\widetilde{\sigma}_{1},\widetilde{\sigma}_{2},\theta_1^i)+\frac{1}{N_2}\sum_{j=1}^{N_2}\widetilde{U}_2(\widetilde{\sigma}_{1},\widetilde{\sigma}_{2},\theta_2^j)=0,\ \forall\,\widetilde{\sigma}_1\in\widetilde{\Sigma}_1,\,\forall\,\widetilde{\sigma}_2\in\widetilde{\Sigma}_2.%
\end{equation}
According to \eqref{eq_zerosum_dis}, denote the expectation of the cost function by  
\begin{equation*}
E\widetilde{U}(\sigma_1,\sigma_2)=\frac{1}{N_1}\sum_{r=1}^{N_1}\widetilde{U}_1(\sigma_1(\theta_1),\sigma_2(\theta_2),\theta_1^r)=-\frac{1}{N_2}\sum_{r=1}^{N_2}\widetilde{U}_2(\sigma_1(\theta_1),\sigma_2(\theta_2),\theta_1^r). 
\end{equation*}
Thus, the last two terms of \eqref{eq_lyp_lastterm} are expressed as 
\begin{align}\label{eq_lyp_zerosum}
&E\widetilde{U}(\widetilde{\sigma}_1^*,\bar{\sigma}_2(q\mathcal{R}))-E\widetilde{U}(\bar{\sigma}_1(q\mathcal{R}),\bar{\sigma}_2(q\mathcal{R}))-\frac{1}{N_1}\sum_{r=1}^{N_1}H_1(\bar{\sigma}^r_1(q\mathcal{R}))\\\notag&-(E\widetilde{U}(\bar{\sigma}_1(q\mathcal{R}),\widetilde{\sigma}_2^*)-E\widetilde{U}(\bar{\sigma}_1(q\mathcal{R}),\bar{\sigma}_2(q\mathcal{R}))-\frac{1}{N_2}\sum_{r=1}^{N_2}H_2(\bar{\sigma}^r_2(q\mathcal{R}))\\
=\notag&E\widetilde{U}(\widetilde{\sigma}_1^*,\widetilde{\sigma}_2^*) - E\widetilde{U}(\bar{\sigma}_1(q\mathcal{R}),\widetilde{\sigma}_2^*)-\frac{1}{N_1}\sum_{r=1}^{N_1}H_1(\bar{\sigma}^r_1(q\mathcal{R}))\\
\notag& -(E\widetilde{U}(\widetilde{\sigma}_1^*,\widetilde{\sigma}_2^*)-E\widetilde{U}(\widetilde{\sigma}_1^*,\bar{\sigma}_2(q\mathcal{R})))-\frac{1}{N_2}\sum_{r=1}^{N_2}H_2(\bar{\sigma}^r_2(q\mathcal{R})). 
\end{align}
According to the definition of the penalty function, $H_l(\sigma_l^*)=0$. From Proposition \ref{prop_penalty}(c), a minimizer of $\widetilde{U}_1$ in $\mathcal{X}_1$ is also the minimizer of $\widetilde{U}_1+H_1$ in $\mathbb{R}^{m_1}$, \textit{i.e.}, for all $\bar{\sigma}_1(q\mathcal{R})\in\mathbb{R}^{N_1m_1}$ and $r=1,\dots,N_1$,  
\begin{equation*}
\begin{aligned}
&\widetilde{U}_1(\widetilde{\sigma}_1^*,\widetilde{\sigma}_2^*,\theta_1^r)-\widetilde{U}_1(\bar{\sigma}_1(q\mathcal{R}),\widetilde{\sigma}_2^*,\theta_1^r)-H_1(\bar{\sigma}^r_1(q\mathcal{R})\\
=&\widetilde{U}_1(\widetilde{\sigma}_1^*,\widetilde{\sigma}_2^*,\theta_1^r)+H_1(\sigma_1^{*r})-(\widetilde{U}_1(\bar{\sigma}_1(q\mathcal{R}),\widetilde{\sigma}_2^*,\theta_1^r)+H_1(\bar{\sigma}^r_1(q\mathcal{R}))\leq0.
\end{aligned} \end{equation*}
Up to now, we have proved that \eqref{eq_lyp_zerosum} is negative. Therefore, with Lemma \ref{lem_conv}, the Lyapunov function $V(q)$ converges to a finite number. Furthermore, 
\begin{equation*}\begin{aligned}
0\leq \sum_{q=0}^{\infty}\alpha(q)\bigg(&\frac{1}{N_1}\sum_{r=1}^{N_1}(\widetilde{U}_1(\bar{\sigma}_1(q\mathcal{R}),\widetilde{\sigma}_2^*,\theta_1^r)+H_1(\bar{\sigma}^r_1(q\mathcal{R}))-\widetilde{U}_1(\widetilde{\sigma}_1^*,\widetilde{\sigma}_2^*,\theta_1^r))\\
+&\frac{1}{N_2}\sum_{r=1}^{N_2}(\widetilde{U}_2(\widetilde{\sigma}_1^*,\bar{\sigma}_2(q\mathcal{R}),\theta_2^r)+H_2(\bar{\sigma}_2^r(q\mathcal{R}))-\widetilde{U}_2(\widetilde{\sigma}_1^*,\widetilde{\sigma}_2^*,\theta_2^r))\bigg)<\infty.\end{aligned} 
\end{equation*}

Then we prove the convergence to the DBNE by the strict convexity of $f_l$. Because $\sum_{t=0}^{\infty}\alpha(t)=\infty$, there exists a subsequence $\{q_s\}$ such that 
\begin{equation*}
\lim_{s\to\infty}\sum_{r=1}^{N_1}(\widetilde{U}_1(\bar{\sigma}_1(q_s\mathcal{R}),\widetilde{\sigma}_1^*,\theta_1^r)+H_1(\bar{\sigma}_1^r(q_s\mathcal{R})))=\sum_{r=1}^{N_1}\widetilde{U}_1(\widetilde{\sigma}_1^*,\widetilde{\sigma}_1^*,\theta_1^r). 
\end{equation*}
Denote the limit point by $\bar{\bar{\sigma}}_1=\lim_{s\to\infty}\bar{\sigma}_1(q_s\mathcal{R})$. As a result, 
\begin{equation*}
\sum_{r=1}^{N_1}(\widetilde{U}_1(\bar{\bar{\sigma}}_1,\widetilde{\sigma}_1^*,\theta_1^r)+H_1(\bar{\bar{\sigma}}_1^r))=\sum_{r=1}^{N_1}\widetilde{U}_1(\widetilde{\sigma}_1^*,\widetilde{\sigma}_1^*,\theta_1^r), 
\end{equation*}
By the strict convexity of $\widetilde{U}_1$ and Proposition \ref{prop_penalty}(c), $
\lim_{s\to\infty}\bar{\sigma}_1(q_s\mathcal{R})=\widetilde{\sigma}_1^*.$ Similarly, $
\lim_{s\to\infty}\bar{\sigma}_2(q_s\mathcal{R})=\widetilde{\sigma}_2^*.$ With $\lim_{t\to\infty}\lVert \sigma_{l,i}(t)-\bar{\sigma}_l(t)\rVert=0$, 
\begin{equation*}
\lim_{s\to\infty}\sigma_{l,i}(q_s\mathcal{R})=\lim_{s\to\infty}\bar{\sigma}_l(q_s\mathcal{R})=\widetilde{\sigma}_l^*,\ l\in\{1,2\}, 
\end{equation*} 
which implies that there exists a subsequence that converges to the DBNE. According to Lemma \ref{thm_bne}, the error between the DBNE of $\widetilde{G}$ and the BNE of $G$ is bounded by $\epsilon=O(\max_{l\in\{1,2\},i\in\{1,\dots,N_l\}}\{\theta_l^i-\theta_1^{i-1}\})$. Thus, we complete the proof of Theorem \ref{thm_conv}.
\end{proof}

With the guarantee of the convergence, we next prove that Algorithm \ref{alg_main} attains an $O(\ln T/\sqrt{T})$ convergence rate, namely Theorem \ref{thm_conv_rate}.
\begin{proof}[Proof of Theorem \ref{thm_conv_rate}]
Take $T_0=\lfloor T/\mathcal{R}\rfloor=O(T)$. Since the column of $\bar{M}_{l,k}(t)$ sums up to 1 for any $t\geq0$, $\bar{\sigma}_l(q\mathcal{R}+t')=\bar{\sigma}_l(q\mathcal{R})$ for all $t'=0,\dots,\mathcal{R}-1$. Define %
\begin{equation*}
E\widetilde{U}_{\max}(\sigma_1^*)=\max_{1\leq q\leq T_0}E\widetilde{U}(\sigma_1^*,\bar{\sigma}_2(q\mathcal{R})),\ E\widetilde{U}_{\min}(\sigma_2^*)=\min_{1\leq q\leq T_0}E\widetilde{U}(\bar{\sigma}_1(q\mathcal{R}),\sigma_2^*).%
\end{equation*} Thus, according to \eqref{eq_con_rate}, %
\begin{align}\label{eq_converrate}
\notag(E\widetilde{U}_{\min}(\sigma_2^*)-E\widetilde{U}(\sigma_1^*,\sigma_2^*))\sum_{q=0}^{T_0}\alpha(q)&\leq\sum_{q=0}^{T_0}(E\widetilde{U}(\bar{\sigma}_1(q\mathcal{R}),\sigma_2^*)-E\widetilde{U}(\sigma_1^*,\sigma_2^*))\alpha(q)\\
&\leq L_{1,1}\sum_{q=0}^{T_0}\alpha(q)Y_1^1(q)\leq C_2+C_3\sum_{q=0}^{T_0}\alpha(q).
\end{align}
where $C_2=\frac{L_{1,1}\Gamma_1R_1\alpha(0)}{1-\xi_1}$ and $C_3=L_{1,1}D_{1}\sqrt{N_1m_1}\left( \frac{\Gamma_1\xi_1}{1-\xi_1}+2\right)$. Then \eqref{eq_converrate} can be equivalently expressed as 
\begin{equation}\label{eq_converrate1}
E\widetilde{U}_{\min}(\sigma_2^*)-E\widetilde{U}(\sigma_1^*,\sigma_2^*)\leq \frac{C_2}{\sum_{q=0}^{T_0}\alpha(q)}+\frac{C_3\sum_{q=0}^{T_0}\alpha(q)^2}{\sum_{q=0}^{T_0}\alpha(q)}. 
\end{equation}
With $\alpha(q)=1/\sqrt{q}$, the two terms on the right hand of \eqref{eq_converrate1} satisfy 
\begin{equation*}
\frac{C_2}{\sum_{q=0}^{T_0}\alpha(q)}=\frac{C_2/2}{\sqrt{T_0}-1}=O\left(\frac{1}{\sqrt{T}}\right),\ \frac{C_3\sum_{q=0}^{T_0}\alpha(q)^2}{\sum_{q=0}^{T_0}\alpha(q)}=\frac{C_3\ln T_0}{2(\sqrt{T_0}-1)}=O\left(\frac{\ln T}{\sqrt{T}}\right). 
\end{equation*}
Similarly, $E\widetilde{U}(\sigma_1^*,\sigma_2^*)-E\widetilde{U}_{\max}(\sigma_1^*)=O(\ln T/\sqrt{T})$. Thus, we finish the proof of Theorem \ref{thm_conv_rate}.
\end{proof}

\section{Numerical simulations}\label{sec_simu}
In this section, we provide numerical simulations to illustrate the effectiveness of Algorithm \ref{alg_main} on subnetwork zero-sum Bayesian games.

Consider a symmetric rent-seeking game with two subnetworks who aim to choose a level of costly effort in order to obtain a share of a prize \cite{fey2008,guow2021}. Each subnetwork $\Xi_l$ consists of three agents. The feasible action sets satisfy $\mathcal{X}_1=\mathcal{X}_2=[0.1,1]$ and $\Theta_1=\Theta_2=[0.01,1.01]$. Also, $\theta_1$ and $\theta_2$ are independent and uniformly distributed over $\Theta_1$ and $\Theta_2$, respectively. For $l\in\{1,2\}$, the cost functions of agents are as follows.%
\begin{equation*}
\begin{aligned}
f_{l,1}(x_1,x_2,\theta_1,\theta_2)&=\frac{(x_l-x_{3-l})(\theta_1+\theta_2)}{2}-\frac{x_l}{6(x_l+x_{3-l})},\\
f_{l,2}(x_1,x_2,\theta_1,\theta_2)&=\frac{(x_l-x_{3-l})(\theta_1+\theta_2)}{2}-\frac{x_l}{2(x_l+x_{3-l})},\\
f_{l,3}(x_1,x_2,\theta_1,\theta_2)&=-\frac{x_l}{3(x_l+x_{3-l})}.\end{aligned}%
\end{equation*}
The communication graph switches periodically over the two graphs $\mathcal{G}^e,\mathcal{G}^o$ given in Fig. \ref{graphs}, where $\mathcal{G}(2k)=\mathcal{G}^e$ and $\mathcal{G}(2k+1)=\mathcal{G}^o$, $k\geq 0$. 

\begin{figure}[h]
\centerline{\includegraphics[width=0.5\textwidth]{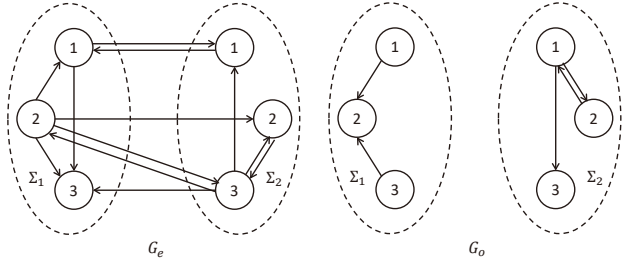}}
\caption{Two possible communication graphs}\label{graphs}
\end{figure}

We take the number of discrete points $N_1=N_2=N$ and the compression ratio $\rho_1=\rho_2=\rho$ in Algorithm \ref{alg_main}. Firstly, we illustrate the convergence of Algorithm \ref{alg_main}. We present the trajectories of strategies in $\Xi_1$ for types $\theta_1=0.1$ and $0.8$ under $N=1000$ and $\rho=0.5$ in Fig. \ref{fig_converge_agents}. We can see that the agents' strategies reach consensus and converge. Fig. \ref{fig_converge_N} shows strategy trajectories of agent 1 in $\Xi_l$ for types $\theta_1=0.1$ and $0.8$ under different $N$. We find that the limit point of $\sigma_{1,1}(t)$ converges as $N$ tends to infinity, which is consistent with Theorem \ref{thm_conv}.
\begin{figure}[!h]
\centering
\subfloat[$\theta_1=0.1$]{\includegraphics[width=0.43\textwidth]{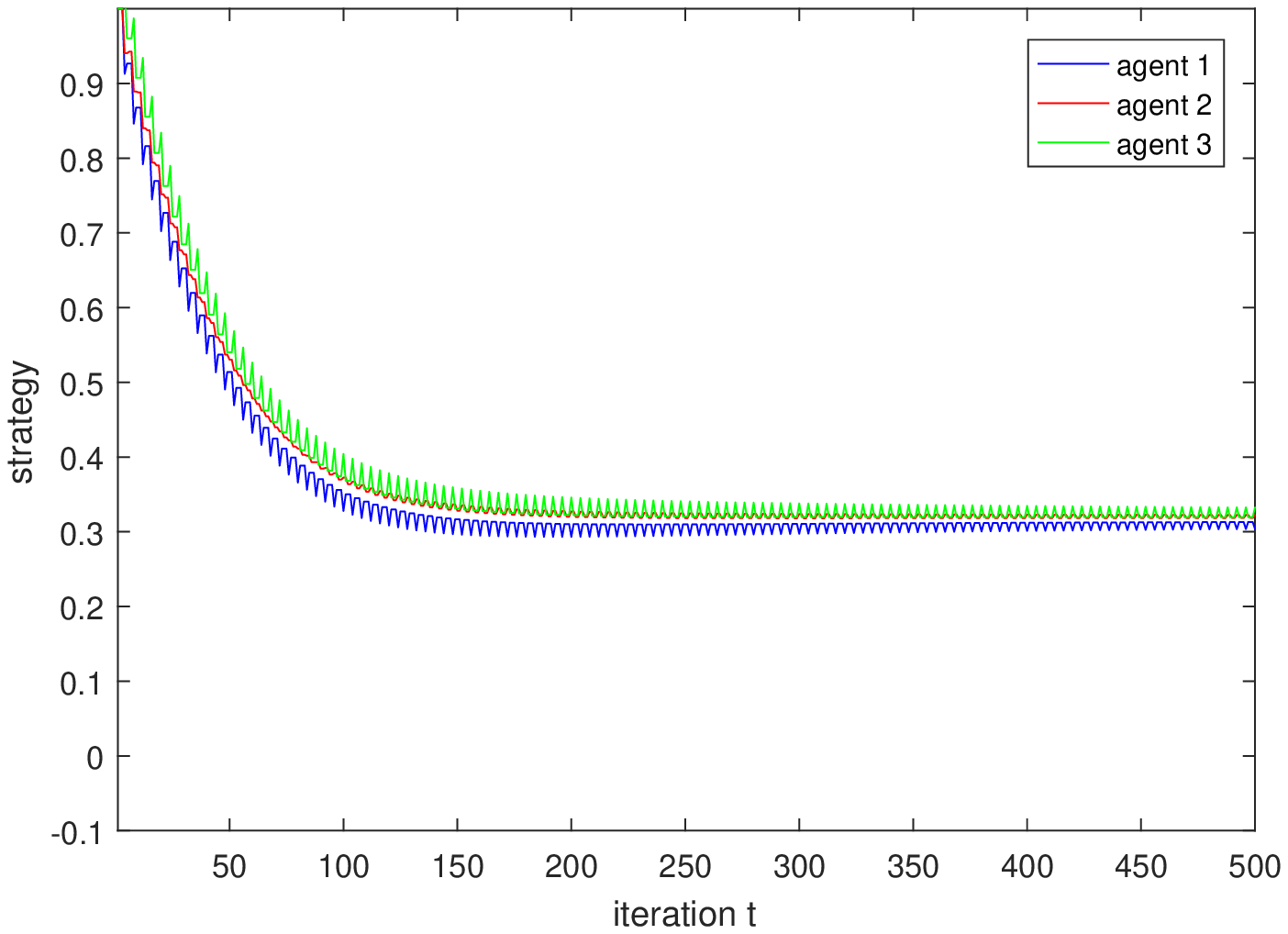}}
\hfill
\subfloat[$\theta_1=0.8$]{\includegraphics[width=0.43\textwidth]{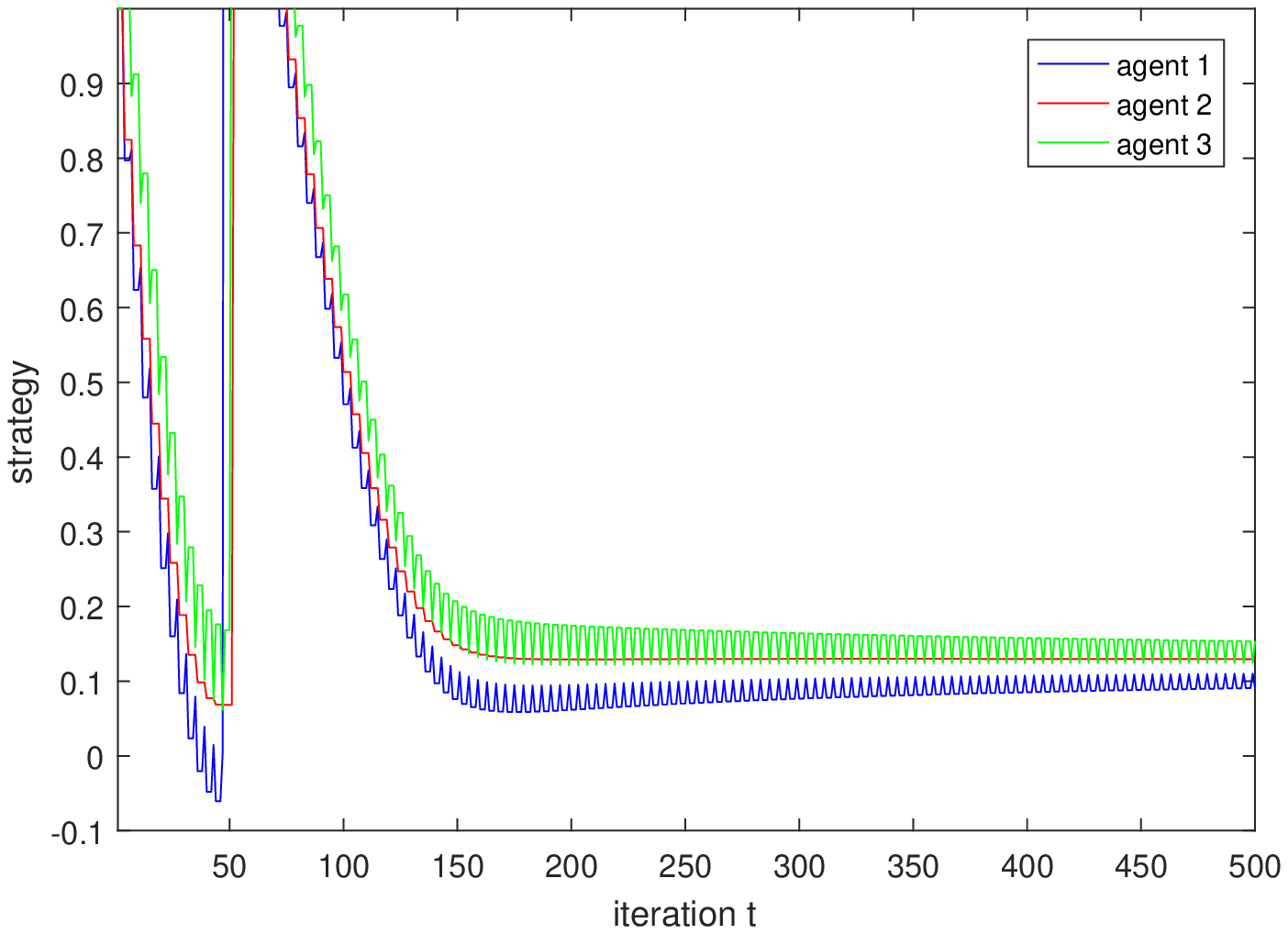}}
\caption{Strategies of agents in $\Xi_1$ at $\theta_1=0.1$ and $0.8$ under $N=1000$ and $\rho=0.5$.}\label{fig_converge_agents}
\end{figure}
\begin{figure}[!h]
\centering
\subfloat[$\theta_1=0.1$]{\includegraphics[width=0.43\textwidth]{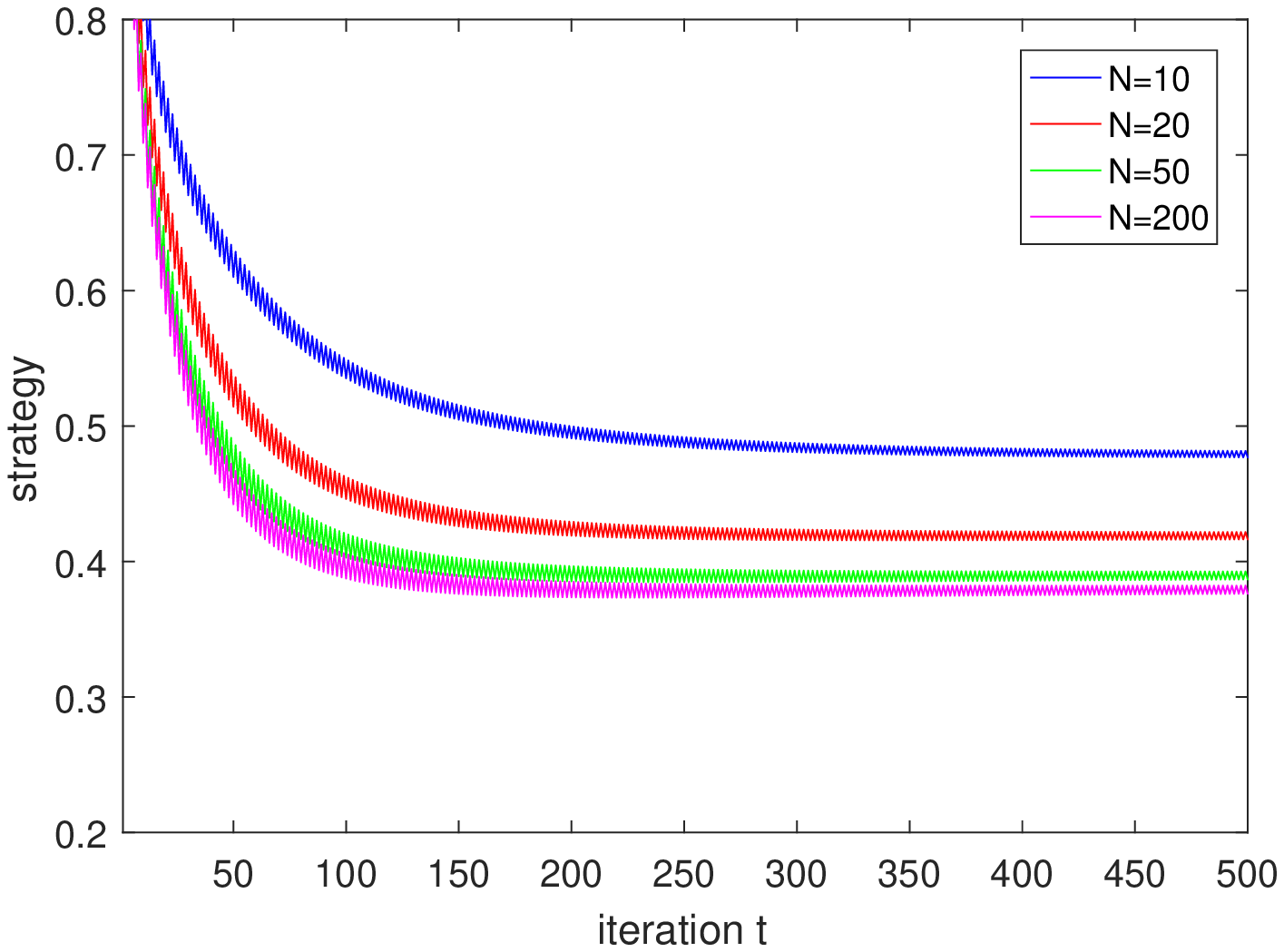}}
\hfill
\subfloat[$\theta_1=0.8$]{\includegraphics[width=0.43\textwidth]{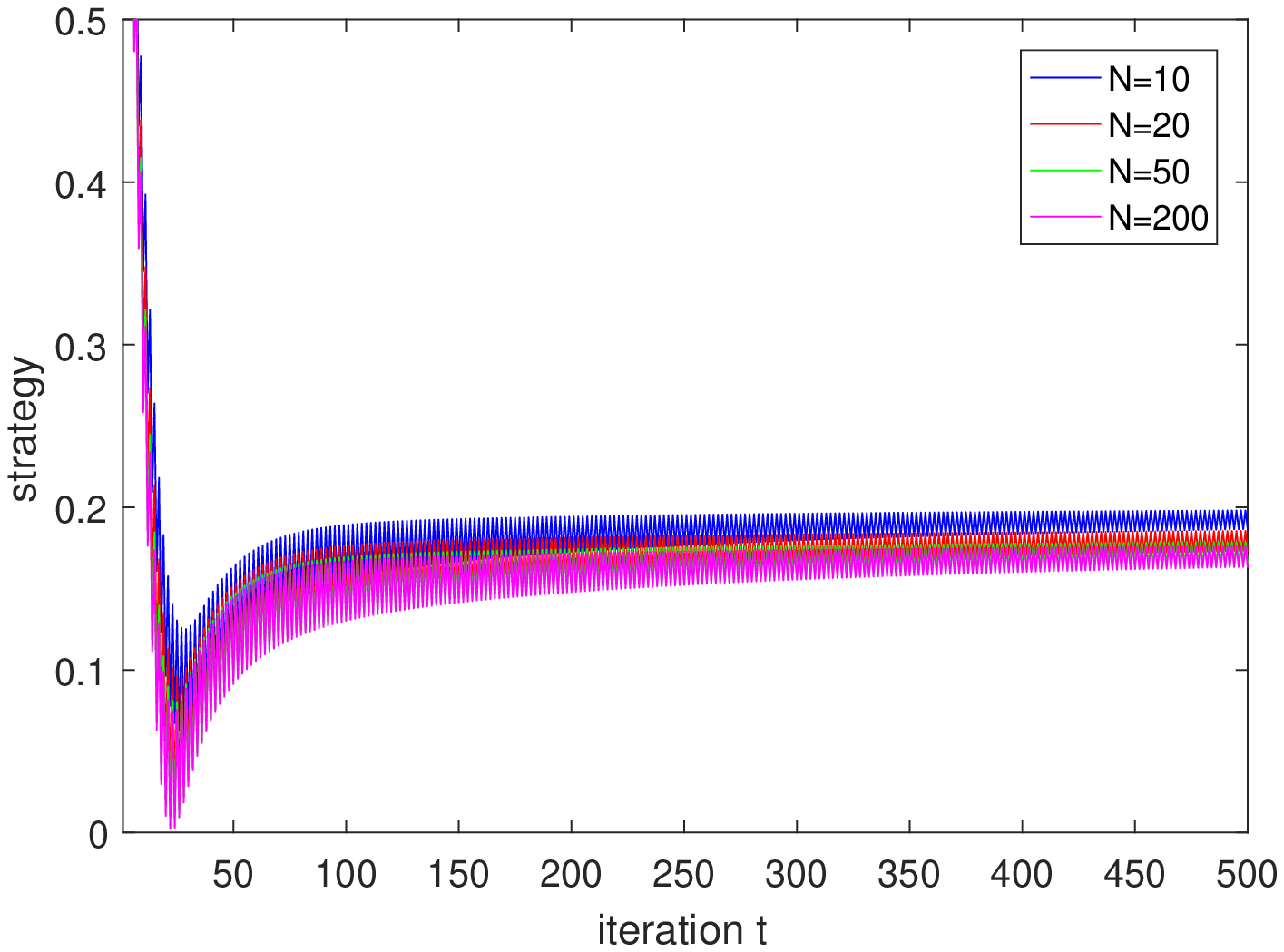}}
\caption{Strategies of agent $1$ in $\Xi_1$ at $\theta_1=0.1$ and $0.8$ with different $N=1000$ under $\rho=1$.}\label{fig_converge_N}
\end{figure}

Then we demonstrate the effectiveness of compression. In the communication, each agent sends $d$-dimensional compressed strategies (8 bytes per element) and the index of those strategies (4 bytes per element) to their current neighbors. Thus, the communication data size of an iteration can be calculated by  
\begin{equation}\label{eq_datasize}
\mathrm{Data\ size}=12\times d\times \mathrm{communication\ times}. 
\end{equation}
\eqref{eq_datasize} matches the result of our simulations in Table \ref{tab_datasize}. \begin{table}[t]
{\footnotesize
\caption{Average data size in network communication with different $N$ and $\rho$.}\label{tab_datasize}
\begin{center}
\begin{tabular}{|c|c|c|c|c|} \hline
Data (kb) & $\rho=1$ & $\rho=0.5$ & $ \rho=0.2$ & $\rho=0.1$\\ \hline
$N=200$ & 19.92 &9.96 &3.98 & 1.99\\
$N=1000$ & 99.61 & 49.80 & 19.92 & 9.96\\ \hline
\end{tabular}
\end{center}
}
\end{table}
We can see that our compression considerably reduce the communication loads. Fig. \ref{fig_converge_compress} demonstrates the convergence of strategies in $\Xi_1$ for specific types $\theta_1=0.1$ and $0.8$ under different $\rho$ with $N=1000$. As Fig. \ref{fig_converge_compress}, the compression does not affect the convergence, which means that whatever $\rho$ we choose, Algorithm \ref{alg_main} remains convergent.

\begin{figure}[h]
\centering
\subfloat[$\theta_1=0.1$]{\includegraphics[width=0.43\textwidth]{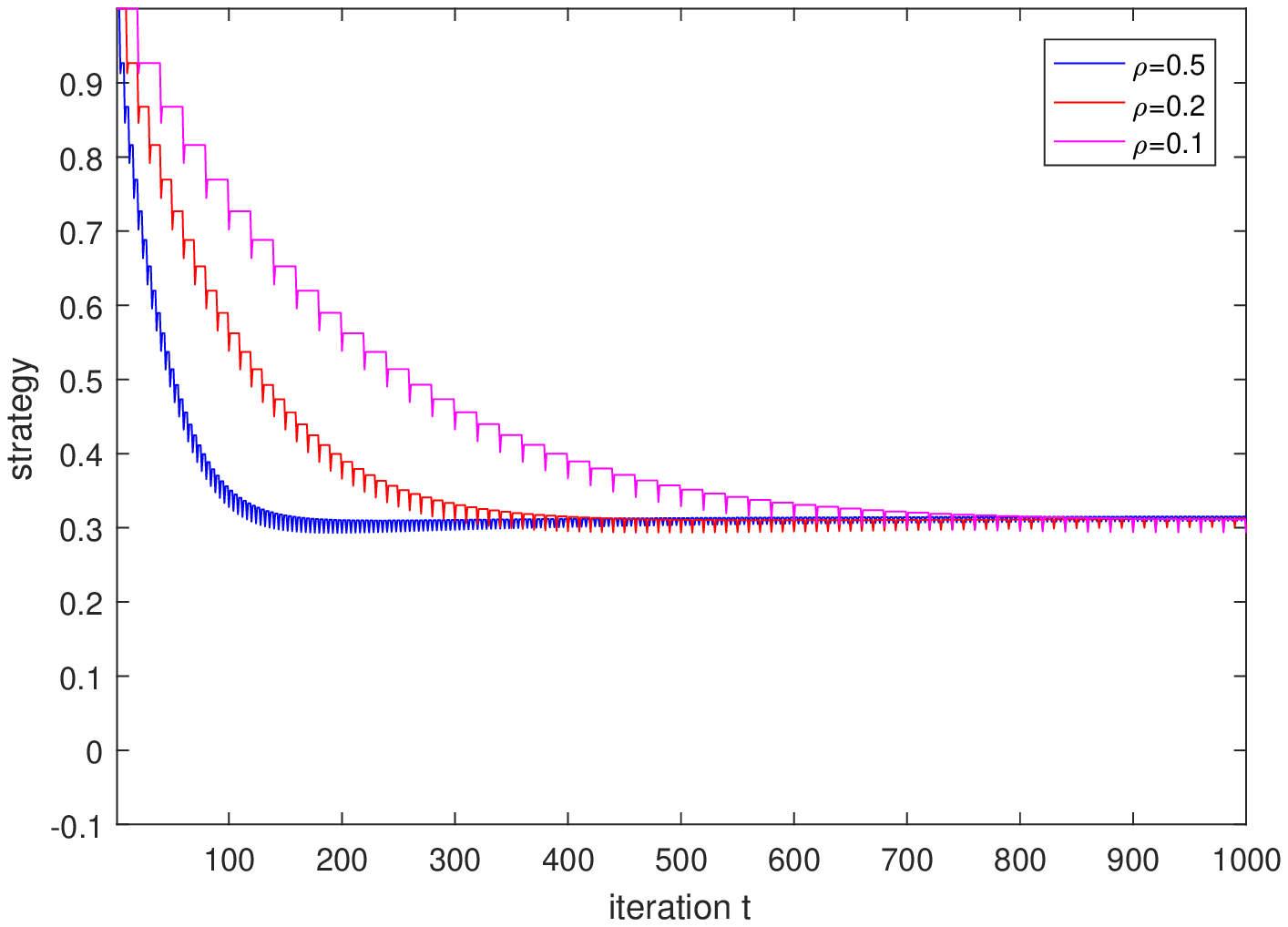}}
\hfill
\subfloat[$\theta_1=0.8$]{\includegraphics[width=0.43\textwidth]{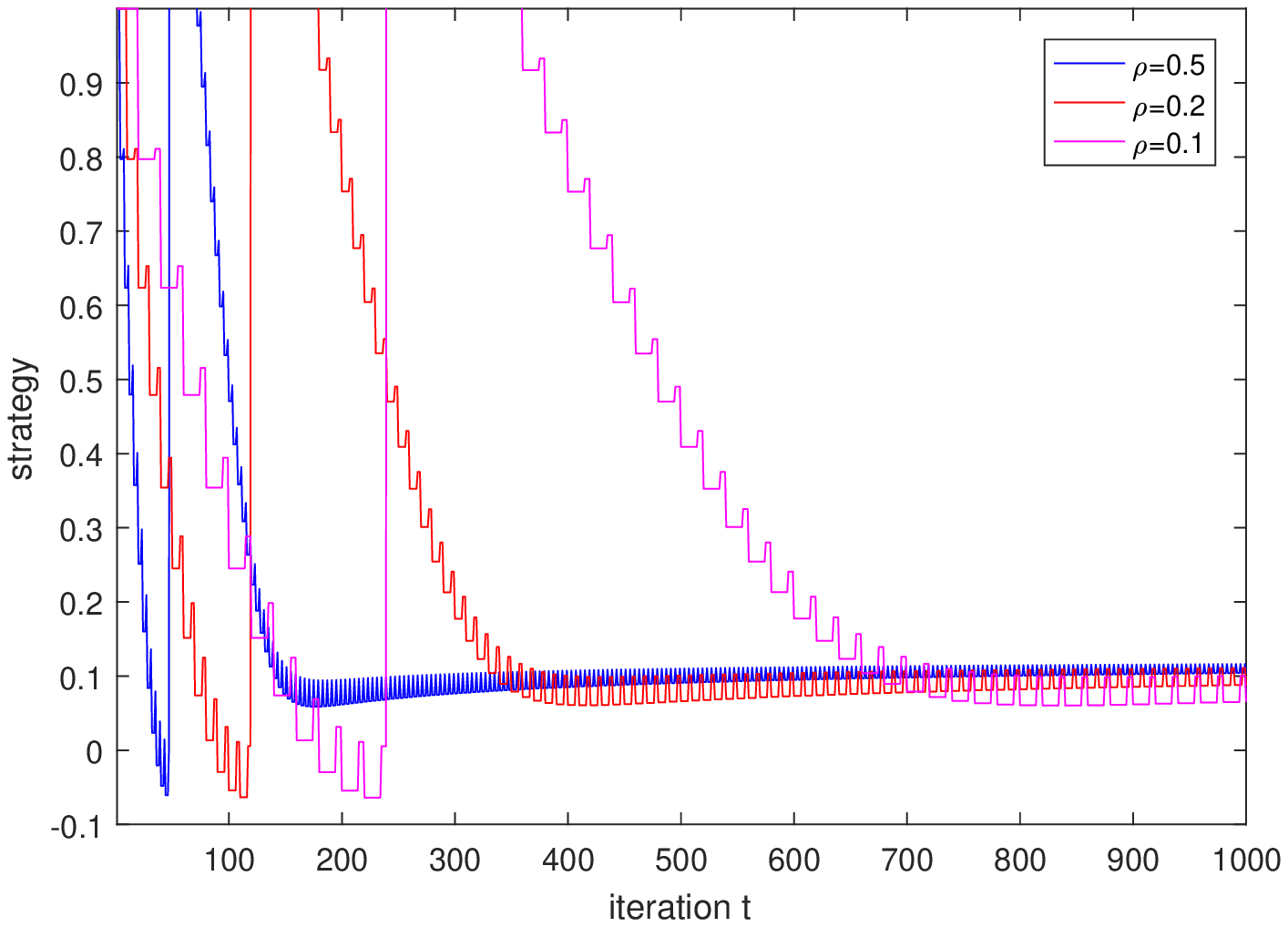}}
\caption{Strategies of agent $1$ in $\Xi_1$ at $\theta_1=0.1$ and $0.8$ with different $\rho$ under $N=1000$.}\label{fig_converge_compress}
\end{figure}

\begin{figure}[!h]
\centering
\subfloat{\label{fig:bb}\includegraphics[width=0.43\textwidth]{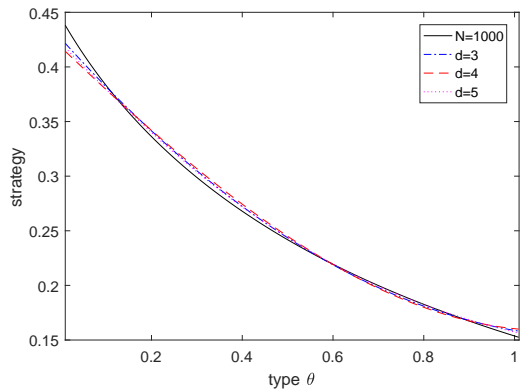}}
\caption{The approximate equilibrium generated by Algorithm \ref{alg_main} ($N=1000$) and by polynomial approximations in {\rm\cite{guos2021}} ($d=3,4,5$). }\label{fig_appro}
\end{figure}

Next, we verify the approximation of the BNE generated by Algorithm \ref{alg_main}. Here we compare the equilibrium obtained from Algorithm \ref{alg_main} and the equilibria obtained by polynomial approximation in \cite{guos2021} in Fig. \ref{fig_appro}. Compared with results by polynomial approximations, we believe that the DBNE is close to the true equilibrium.

\section{Conclusion}\label{sec_con}
In this paper, we proposed a distributed algorithm for seeking a continuous-type BNE in subnetwork zero-sum games. We showed that the algorithm could obtain an approximate BNE with an explicit error bound via communication-efficient computation. Our algorithm involved two main steps. In the discretization step, we established a discretized model and provided the relation between the DBNE of the discretized model and the BNE of the continuous model with the explicit error bound. Then in the communication step, we provided a novel communication scheme with a designed sparsification rule, which could effectively reduce the amount of communication and adapt well to time-varying networks, and we proved that agents could obtain unbiased estimations through such communication. Finally, we provided convergence analysis of the algorithm and its convergence rate in the considered settings.

\appendix
\renewcommand\thesection{Appendix~\Alph{section}}

\section{Proof of Lemma \ref{prop_br}}\label{pf_br}
For any $i\in\{1,\dots,N_l\}$ and $\theta_{3-l}\in\Theta_{3-l}$, according to L'Hospital's rule, 
\begin{equation*}
\lim_{N_l\to\infty}\frac{\int_{\theta_l^{i-1}}^{\theta_l^i}p(\theta_1,\theta_2)d\theta_l}{\int_{\theta_l^{i-1}}^{\theta_l^i}p_l(\theta_l)d\theta_l}=\frac{p(\theta_1,\theta_2)|_{\theta_l=\theta_l^i}}{p_l(\theta_l^i)}=p_l(\theta_{3-l}|\theta_l^i).
\end{equation*}
Thus, by the best responses in $G$ and $\widetilde{G}$, for any $\widetilde{\sigma}_{l*}\in BR_l^{N_l}(\sigma_{3-l})$ and $\theta_l\in\widetilde{\Theta}_l$, there exists a strategy $\sigma_{l*}\in BR_l(\sigma_{3-l})$ such that, as $N_l\to\infty$, $\widetilde{\sigma}^{N_l}_{l*}(\theta_l)=\sigma_{l*}(\theta_l).$ Since $\sigma_{l*}$ is piecewise continuous, for any $\varepsilon>0$, there exists a $\delta>0$ such that, for any $\theta_l\in\Theta_l$ except for finite points and $\theta_l'\in B(\theta_l,\delta)\cap\Theta_l$, $|\sigma_{l*}(\theta_l)-\sigma_{l*}(\theta_l')|<\varepsilon$. As $N_l\to\infty$, $\max_{i\in\{1,\dots,N_l\}} (\theta_l^i-\theta_l^{i-1})<\delta$, 
\begin{equation}\label{eq_br}
\int_{\Theta_l} \lVert\widetilde{\sigma}^{N_l}_{l*}(\theta_l)-\sigma_{l*}(\theta_l)\rVert^2d\theta_l\leq\sum_{i=1}^{N_l}\varepsilon^2(\overline{\theta}_l^i-\underline{\theta}_l^{i-1})=\varepsilon^2(\overline{\theta}_l-\underline{\theta}_l). 
\end{equation}
As $\varepsilon\to0$, \eqref{eq_br} implies $\widetilde{\sigma}^{N_l}_{l*}(\theta_l)-\sigma_{l*}(\theta_l)\to0$ for almost every $\theta_l\in\widetilde{\Theta}_l$. 

\section{Proof of Lemma \ref{thm_bne}}\label{pf_bne}
To estimate the BNE of $G$ with strategies in $\widetilde{G}$, we first convert the continuous model $G$ to the discretized model $\widetilde{G}$, and then estimate the error between $\int_{\theta_l}U_lp_l(\theta_l)$ and $\sum_{\theta_l}\widetilde{U}_l\widetilde{P}_l(\theta_l)$ for strategies in $\widetilde{\Sigma}_l$.

From Assumption \ref{ass_game}(iv), for each $l\in\{1,2\}$ and $k\in\{1,2\}$, $f_l$ is $L_{\theta}$ continuous in $\theta_k\in\Theta_k$ for any $x_1\in\mathcal{X}_1$, $x_2\in\mathcal{X}_2$ and $\theta_{3-k}\in\Theta_{3-k}$. For any $\theta_1\in(\theta_1^{i-1},\theta_1^i]$ and $\theta_2\in(\theta_2^{j-1},\theta_2^j]$, $$|f_l(x_1,x_2,\theta_1,\theta_2)-f_l(x_1,x_2,\theta_1^i,\theta_2^j)|\leq L_{\theta}|\theta_1^i-\theta_1|+L_{\theta}|\theta_2^j-\theta_2|\leq 2L_{\theta}\epsilon_0,$$ where $\epsilon_0=\max\{\theta_1^{i}-\theta_1^{i-1},\theta_2^{j}-\theta_2^{j-1}\}$. Note that a strategy in $\widetilde{\Sigma}_l$ is a constant for the types lying in $(\theta_l^{i-1},\theta_l^i]$. For any $\widetilde{\sigma}_1\in\widetilde{\Sigma}_1$ and $\widetilde{\sigma}_2\in\widetilde{\Sigma}_2$,
\begin{align}\label{eq_transinprf}
&\int_{\underline{\theta}_1}^{\overline{\theta}_1}\int_{\underline{\theta}_2}^{\overline{\theta}_2} f_l(\widetilde{\sigma}_1(\theta_1),\widetilde{\sigma}_2(\theta_2),\theta_1,\theta_2)p(\theta_1,\theta_2)d\theta_1d\theta_2\\
\notag= &\sum_{i=1}^{N_1}\sum_{j=1}^{N_2}\int_{\theta_1^{i-1}}^{\theta_1^i}\int_{\theta_2^{j-1}}^{\theta_2^j} \big(f_l(\widetilde{\sigma}_1(\theta_1),\widetilde{\sigma}_2(\theta_2),\theta_1^i,\theta_2^j)+f_l(\widetilde{\sigma}_1(\theta_1),\widetilde{\sigma}_2(\theta_2),\theta_1,\theta_2)\\\notag&-f_l(\widetilde{\sigma}_1(\theta_1),\widetilde{\sigma}_2(\theta_2),\theta_1^i,\theta_2^j)\big)p(\theta_1,\theta_2)d\theta_1d\theta_2\\
\notag\geq &\sum_{i=1}^{N_1}\sum_{j=1}^{N_2}f_l(\widetilde{\sigma}_1(\theta_1),\widetilde{\sigma}_2(\theta_2),\theta_1^i,\theta_2^j)\widetilde{P}(\theta_1^i,\theta_2^j)+2L_\theta\epsilon_0.
\end{align}
Then, according to the definition of the DBNE in \eqref{eq_bne_dis}, for any $i\in\{1,\dots,N_1\}$, 
\begin{equation}\label{eq_disbne_inq}
\sum_{j=1}^{N_2}f_1(x_1,\widetilde{\sigma}_2^*(\theta_2),\theta_1^i,\theta_2^j)\widetilde{P}_1(\theta_2^j|\theta_1^i)\geq\sum_{j=1}^{N_2}f_1(\widetilde{\sigma}_1^*(\theta_1^i),\widetilde{\sigma}_2^*(\theta_2^j),\theta_1^i,\theta_2^j)\widetilde{P}_1(\theta_2^j|\theta_1^i). 
\end{equation}

Our purpose is to convert the term on the left hand of \eqref{eq_disbne_inq} from the discretized form to the continuous form. To get the relation between the discretized form and the continuous form, we adopt the distribution used in Definition \ref{def_disbr}. For any $\sigma_1\in\Sigma_1$, 
\begin{align}\label{eq_contintodis}
&\sum_{j=1}^{N_2}\int_{\theta_1^{i-1}}^{\theta_1^i} f_1(\sigma_1(\theta_1),\widetilde{\sigma}_2^*(\theta_2^j),\theta_1^i,\theta_2^j)\int_{\theta_2^{j-1}}^{\theta_2^j}p(\theta_1,\theta_2)d\theta_1d\theta_2\\
=&\notag\sum_{j=1}^{N_2}\int_{\theta_1^{i-1}}^{\theta_1^i}f_1(\sigma_1(\theta_1),\widetilde{\sigma}_2^*(\theta_2^j),\theta_1^i,\theta_2^j)\int_{\theta_2^{j-1}}^{\theta_2^j}\bigg(p(\theta_1,\theta_2)-\frac{\int_{\theta_1^{i-1}}^{\theta_1^i}p(\theta_1',\theta_1)d\theta_1'}{\theta_1^i-\theta_1^{i-1}}\\&\notag\qquad\qquad\qquad\qquad\qquad\qquad\qquad\qquad\quad\ \ +\frac{\int_{\theta_1^{i-1}}^{\theta_1^i}p(\theta_1',\theta_1)d\theta_1'}{\theta_1^i-\theta_1^{i-1}}\bigg)d\theta_1d\theta_2.
\end{align}
Due to the Lipschitz continuity of $p(\theta_1,\theta_2)$,%
\begin{align}\label{eq_probcontintodis}
\left|p(\theta_1,\theta_2)-\frac{\int_{\theta_1^{i-1}}^{\theta_1^i}p(\theta_1',\theta_1)d\theta_1'}{\theta_1^i-\theta_1^{i-1}}\right|&\leq\frac{1}{\theta_1^i-\theta_1^{i-1}}\int_{\theta_1^{i-1}}^{\theta_1^i}|p(\theta_1,\theta_2)-p(\theta_1',\theta_1)|d\theta_1'\\
\notag&\leq \frac{1}{\theta_1^i-\theta_1^{i-1}}\int_{\theta_1^{i-1}}^{\theta_1^i}L_p|\theta_1-\theta_1'|d\theta_1'
\leq L_p\epsilon_0.
\end{align}
According to the Lipschitz continuity of $f_l$ and the compactness of $\mathcal{X}_1$, $\mathcal{X}_2$, $\Theta_1$, and $\Theta_2$, there exists a constant $M$ such that $|f_l|\leq M$. Applying \eqref{eq_probcontintodis} to \eqref{eq_contintodis},  
\begin{equation*}
\begin{aligned}
&\sum_{j=1}^{N_2}\int_{\theta_1^{i-1}}^{\theta_1^i} f_1(\sigma_1(\theta_1),\widetilde{\sigma}_2^*(\theta_2^j),\theta_1^i,\theta_2^j)\int_{\theta_2^{j-1}}^{\theta_2^j}p(\theta_1,\theta_2)d\theta_1d\theta_2\\
\geq&\sum_{j=1}^{N_2}\int_{\theta_1^{i-1}}^{\theta_1^i} f_1(\sigma_1(\theta_1),\widetilde{\sigma}_2^*(\theta_2^j),\theta_1^i,\theta_2^j)\frac{d\theta_1}{\theta_1^i-\theta_1^{i-1}}\widetilde{P}(\theta_1^i,\theta_2^j)-ML_p(\theta_1^i-\theta_1^{i-1})(\overline{\theta}_2-\underline{\theta}_2)\epsilon_0\\
\geq&\sum_{j=1}^{N_2}f_1(\widetilde{\sigma}_1^*(\theta_1^i),\widetilde{\sigma}_2^*(\theta_2^j),\theta_1^i,\theta_2^j)\widetilde{P}(\theta_1^i,\theta_2^j)-ML_p(\theta_1^i-\theta_1^{i-1})(\overline{\theta}_2-\underline{\theta}_2)\epsilon_0.
\end{aligned}%
\end{equation*}
Thus, for any $\sigma_1\in\Sigma_1$, \eqref{eq_disbne_inq} can be rewritten as%
\begin{align}\label{eq_pfap}
&\sum_{i=1}^{N_1}\sum_{j=1}^{N_2}f_1(\widetilde{\sigma}^*_1(\theta_1^i),\widetilde{\sigma}^*_2(\theta_2^j),\theta_1^i,\theta_2^j)\widetilde{P}(\theta_1^i,\theta_2^j)\\
\notag\leq&\sum_{i=1}^{N_1}\sum_{j=1}^{N_2}\int_{\theta_1^{i-1}}^{\theta_1^i}\int_{\theta_2^{j-1}}^{\theta_2^j} f_1(\sigma_1(\theta_1),\widetilde{\sigma}_2^*(\theta_2^j),\theta_1^i,\theta_2^j)p(\theta_1,\theta_2)d\theta_1d\theta_2\\
\notag&+ML_p(\overline{\theta}_1-\underline{\theta}_1)(\overline{\theta}_2-\underline{\theta}_2)\epsilon_0\\
\notag\leq&\sum_{i=1}^{N_1}\sum_{j=1}^{N_2}\int_{\theta_1^{i-1}}^{\theta_1^{i}}\int_{\theta_2^{j-1}}^{\theta_2^j}f_1(\sigma_1(\theta_1),\widetilde{\sigma}_2^*(\theta_2^j),\theta_1,\theta_2)p(\theta_1,\theta_2)d\theta_1d\theta_2+C_0\epsilon_0\\
\notag=&\int_{\underline{\theta}_1}^{\overline{\theta}_1}\int_{\underline{\theta}_2}^{\overline{\theta}_2}f_1(\sigma_1(\theta_1),\widetilde{\sigma}_2^*(\theta_2),\theta_1,\theta_2)p(\theta_1,\theta_2)d\theta_1d\theta_2+C_0\epsilon_0,
\end{align}
where $C_0=2L_\theta+ML_p(\overline{\theta}_1-\underline{\theta}_1)(\overline{\theta}_2-\underline{\theta}_2)$. Then applying \eqref{eq_pfap} to \eqref{eq_transinprf}, for any $\sigma_1\in\Sigma_1$,  
\begin{equation*}
\begin{aligned}
&\int_{\underline{\theta}_1}^{\overline{\theta}_1}\int_{\underline{\theta}_2}^{\overline{\theta}_2}f_1(\widetilde{\sigma}_1^*(\theta_1),\widetilde{\sigma}_2^*(\theta_2),\theta_1,\theta_2)p(\theta_1,\theta_2)d\theta_1d\theta_2\\
\leq&\int_{\underline{\theta}_1}^{\overline{\theta}_1}\int_{\underline{\theta}_2}^{\overline{\theta}_2}f_1(\sigma_1(\theta_1),\widetilde{\sigma}_2^*(\theta_2),\theta_1,\theta_2)p(\theta_1,\theta_2)d\theta_1d\theta_2+C_1\epsilon_0,
\end{aligned}%
\end{equation*}
namely $EU(\widetilde{\sigma}^*_1,\widetilde{\sigma}_2^*)\leq EU(\sigma_1,\widetilde{\sigma}_2^*)+C_1\epsilon_0$, where $C_1=C_0+2L_\theta$. Similarly, for any $\sigma_2\in\Sigma_2$, $
EU(\widetilde{\sigma}^*_1,\widetilde{\sigma}_2^*)\geq EU(\widetilde{\sigma}^*_1,\sigma_2)+C_1\epsilon_0$. Hence, a DBNE is an $\epsilon_1$-BNE with $\epsilon_1=C_1\epsilon_0$. 

Next, we prove that the DBNE converges to the BNE. By the zero-sum condition, 
\begin{equation*}
EU(\sigma_1^*,\sigma_2^*)\geq EU(\sigma_1^*,\widetilde{\sigma}_2^*)\geq EU(\widetilde{\sigma}_1^*,\widetilde{\sigma}_2^*)-\epsilon_1\geq EU(\widetilde{\sigma}_1^*,\sigma_2^*)-2\epsilon_1\geq EU(\sigma_1^*,\sigma_2^*)-2\epsilon_1.%
\end{equation*}
Thus, $EU(\widetilde{\sigma}_1^*,\sigma_2^*)-EU(\sigma_1^*,\sigma_2^*)\leq 2\epsilon_1$. Since $f_l$ is $\mu$-strongly convex, for any $\theta_1\in\Theta_1$ and $\theta_2\in\Theta_2$, 
\begin{equation*}
\begin{aligned}
&f_1(\widetilde{\sigma}_1^*(\theta_1),\sigma^*_2(\theta_2),\theta_1,\theta_2)- f_1(\sigma_1^*(\theta_1),\sigma_2^*(\theta_2),\theta_1,\theta_2)\\\geq&(\nabla_1 f_1(\sigma_1^*(\theta_1),\sigma_2^*(\theta_2),\theta_1,\theta_2))^T(\widetilde{\sigma}_1^*(\theta_1)-\sigma_1^*(\theta_1))+\frac{\mu}{2}\lVert \widetilde{\sigma}_1^*(\theta_1)-\sigma_1^*(\theta_1)\rVert^2.
\end{aligned} 
\end{equation*}
Because for any $\theta_1\in\Theta_1$, $(\nabla_1U_1(\sigma_1^*(\theta_1),\sigma_2^*(\theta_2),\theta_1))^T( \widetilde{\sigma}_1^*(\theta_1)-\sigma_1^*(\theta_1))\geq0$,  
\begin{equation*}
2\epsilon_1\geq EU(\widetilde{\sigma}_1^*,\sigma_2^*)-EU(\sigma_1^*,\sigma_2^*)\geq \frac{\mu}{2}\int_{\underline{\theta}_1}^{\overline{\theta}_1}\int_{\underline{\theta}_2}^{\overline{\theta}_2}\lVert \widetilde{\sigma}_1^*(\theta_1)-\sigma_1^*(\theta_1)\rVert^2 p(\theta_1,\theta_2)d\theta_1d\theta_2=\frac{\mu}{2}\lVert \widetilde{\sigma}_1^*-\sigma_1^*\rVert^2_{\mathcal{H}_1},
\end{equation*}
namely $\lVert \widetilde{\sigma}_1^*-\sigma_1^*\rVert^2_{\mathcal{H}_1}\leq\frac{4\epsilon_1}{\mu}$. Similarly, $\lVert \widetilde{\sigma}_2^*-\sigma_2^*\rVert^2_{\mathcal{H}_2}\leq\frac{4\epsilon_1}{\mu}$. As $N_1$ and $N_2$ tend to infinity, $\epsilon_1$ tends to 0, and thus the DBNE $(\widetilde{\sigma}_1^*,\widetilde{\sigma}_2^*)$ converges to the BNE $(\sigma_1^*,\sigma_2^*)$.%

\section{Proof of Proposition \ref{prop_compress_graph}}\label{pf_com_graph}
According to Assumption \ref{ass_game}(vi), each agent $v_l^i$ is able to communicate with agents in $\Xi_l$ once every $\mathcal{R}_0$ iterations and have access to messages from $\Xi_{3-l}$ every $\mathcal{S}_0$ iterations, which means that $v_l^i$ can receive nonempty messages $[x(t)]_k$ ($x(t)\in\mathbb{R}^{N_{l}m_{l}}$) at each entry $k$ from $\Xi_l$ in $\mathcal{R}_0\lceil N_lm_l/d_l\rceil$ iterations and nonempty messages $[y(t)]_k$ ($y(t)\in\mathbb{R}^{N_{3-l}m_{3-l}}$) at each entry $k$ from $\Xi_{3-l}$ in $\mathcal{S}_0\lceil N_{3-l}m_{3-l}/d_{3-l}\rceil$ iterations. Thus, the generated graph sequences $\mathcal{G}_{l,k}(t)$ satisfy Proposition \ref{prop_compress_graph} with $\mathcal{R}=\max_l\{\mathcal{R}_0\lceil N_lm_l/d_l\rceil\}$ and $\mathcal{S}=\mathcal{S}_0\mathcal{R}$. 

\section{Proof of Lemma \ref{prop_compress}}\label{pf_com}
Due to Jensen's inequality, 
\begin{equation*}\begin{aligned}
\lVert\hat{\sigma}_{l,i}(t)-\bar{\sigma}_l(t)\rVert\leq &\sum_{k=1}^{N_lm_l} \lVert[\hat{\sigma}_{l,i}(t)]_k-[\bar{\sigma}_l(t)]_k\rVert
\\\leq&\sum_{k=1}^{N_lm_l}\sum_{j=1}^{n_l} [A_{l,k}(t)]_{ij}\lVert[\sigma_{l,j}(t)]_k-[\bar{\sigma}_l(t)]_k\rVert
\leq\sqrt{N_lm_l} \varepsilon_0.
\end{aligned} 
\end{equation*}
At time $t$, denote the last time agent $v_l^i$ receive nonempty messages from its rivals at the entry $k$ ($k\in\{1,\dots,N_lm_l\}$) by $s_k(t)$. From Proposition \ref{prop_compress_graph}(b), $u_k(t)\leq \mathcal{S}$. For any $t>0$, 
\begin{equation*}
\begin{aligned}
&\lVert\hat{\zeta}_{3-l,i}(t)-\bar{\sigma}_{l}(t)\rVert\leq \sum_{k=1}^{N_lm_l}\lVert [\hat{\zeta}_{3-l,i}(s_k(t))]_k-[\bar{\sigma}_l(t)]_k\rVert\\
\leq& \sum_{k=1}^{N_lm_l}\big(\lVert[\bar{\sigma}_l(s_k(t))]_k-[\bar{\sigma}_l(t)]_k\rVert+\lVert [\hat{\zeta}_{3-l,i}(s_k(t))]_k-[\bar{\sigma}_l(s_k(t))]_k\rVert\big)\\
\leq& \sum_{k=1}^{N_lm_l}\bigg(\sum_{j=s_k(t)}^{t} \lVert [\bar{\sigma}_l(j)]_k-[\bar{\sigma}_l(j+1)]_k\rVert\\&+\sum_{j=1}^{n_l}[C_{l,k}(s_k(t))]_{ij}\lVert[\hat{\sigma}_{l,j}(s_k(t))]_k-[\bar{\sigma}_l(s_k(t))]_k\rVert\bigg)\leq N_lm_l\varepsilon_0+\mathcal{S}\sqrt{N_lm_l}\varepsilon_1.
\end{aligned} 
\end{equation*}

\section{Proof of Proposition \ref{prop_penalty}}\label{pf_penalty}
For any $x,y\in\mathbb{R}^{m_l}$ and $\lambda\in[0,1]$, denote the projection by $x'=\Pi_{\mathcal{X}_l}(x)$ and $y'=\Pi_{\mathcal{X}_l}(x)$. Since $\mathcal{X}_l$ is convex, $\lambda x'+(1-\lambda)y'\in\mathcal{X}_l$ for $0<\lambda<1$. Therefore, $\lVert x-x'\rVert\leq \lVert x-z\rVert$ for any $z\in\mathcal{X}_l$. 
\begin{equation*}
\begin{aligned}
H_l(\lambda x+(1-\lambda) y)=&E_l\lVert\lambda x+(1-\lambda) y-\Pi_{\mathcal{X}_l}(\lambda x+(1-\lambda) y)\rVert\\
\leq&E_l\lVert \lambda x+(1-\lambda) y-(\lambda x'+(1-\lambda) y')\rVert\\
\leq&\lambda E_l\lVert x-x'\rVert+(1-\lambda)E_l\lVert y-y'\rVert=\lambda H_l(x)+(1-\lambda)H_l(y).
\end{aligned} \end{equation*}
Thus, $H_l$ is convex. Moreover,
\begin{equation*}
\begin{aligned}
E_l\lVert x-y\rVert=E_l\lVert x-x'+x'-y\rVert\geq H_l(x)-E_l\lVert x'-y\rVert
\geq H_l(x)-E_l\lVert y'-y\rVert&\\=H_l(x)-H_l(y).&
\end{aligned}\end{equation*}
Similarly, $E_l\lVert x-y\rVert\geq H_l(y)-H_l(x)$. Therefore, $H_l$ is $E_l$-Lipschitz continuous in $x\in\mathbb{R}^{m_l}$.

By the properties of projection, $\left<z-x',x-x'\right>\leq 0$ for any $z\in\mathcal{X}_l$. Then

\begin{equation*}
\begin{aligned}
&\left<y-x,h_l(x)\right>=\left<y-y'+y'-x'+x'-x,E_l\tfrac{x-x'}{\lVert x-x'\rVert}\right>\\
\leq&\left<y-y',E_l\tfrac{x-x'}{\lVert x-x'\rVert}\right>-H_l(x)\leq E_l\lVert y-y'\rVert \tfrac{\lVert x-x'\rVert}{\lVert x-x'\rVert}-H_l(x)=H_l(y)-H_l(x).
\end{aligned}\end{equation*}
Thus, $h_l$ is a subgradient of $H_l$.

Fix $\sigma_2\in\mathbb{R}^{N_lm_l}$. Denote the optimal strategy of $\Xi_1$ by $\sigma_1'\in\mathcal{X}_l$ when $\Xi_2$ adopts $\sigma_2$, which satisfies $\sigma_1^{r '}=\arg\min_{\sigma_1^r\in\mathcal{X}_l}\widetilde{U}_1(\sigma_1,\sigma_2,\theta_1^r)$ for $r\in\{1,\dots,N_1\}$. For any $\sigma^r_1\in\mathbb{R}^{m_l}$,

\begin{equation*}
\begin{aligned}
&\widetilde{U}_1(\sigma_1,\sigma_2,\theta_1^r)+H_1(\sigma_1^r)-\widetilde{U}_1(\sigma'_1,\sigma_2,\theta_1^r)\\\geq &\widetilde{U}_1(\sigma_1,\sigma_2,\theta_1^r)+H_1(\sigma_1^r)-\widetilde{U}_1(\Pi_{\widetilde{\Sigma}}(\sigma_1),\sigma_2,\theta_1^r)\\
\geq&-L_{1,1}\lVert \sigma_1^r-\Pi_{\mathcal{X}_1}(\sigma_1^r)\rVert+E_1\lVert \sigma_1^r-\Pi_{\mathcal{X}_1}(\sigma_1^r)\rVert\geq 0.
\end{aligned}\end{equation*}
Then we obtain a similar result for $\Xi_2$. Therefore, for $l\in\{1,2\}$, the minimizer of $\widetilde{U}_l+H_l$ in $\mathbb{R}^{m_l}$ lies in $\mathcal{X}_l$.



\begin{thebibliography}{10}

\bibitem{akkarajitsakul2011}
{\sc K.~Akkarajitsakul, E.~Hossain, and D.~Niyato}, {\em Distributed resource
  allocation in wireless networks under uncertainty and application of Bayesian
  game}, IEEE Communications Magazine, 49 (2011), pp.~120--127.

\bibitem{alistarh2017}
{\sc D.~Alistarh, D.~Grubic, J.~Li, R.~Tomioka, and M.~Vojnovic}, {\em QSGD:
  Communication-efficient SGD via gradient quantization and encoding}, Advances
  in Neural Information Processing Systems, 30 (2017), pp.~1709--1720.

\bibitem{athey2001}
{\sc S.~Athey}, {\em Single crossing properties and the existence of pure
  strategy equilibria in games of incomplete information}, Econometrica, 69
  (2001), pp.~861--889.

\bibitem{basar1989}
{\sc T.~Başar and S.~Li}, {\em Distributed computation of Nash equilibria in
  linear-quadratic stochastic differential games}, SIAM Journal on Control and
  Optimization, 27 (1989), pp.~563--578.

\bibitem{bernstein2018}
{\sc J.~Bernstein, Y.-X. Wang, K.~Azizzadenesheli, and A.~Anandkumar}, {\em
  signSGD: Compressed optimisation for non-convex problems}, in International
  Conference on Machine Learning, PMLR, 2018, pp.~560--569.

\bibitem{bhaskar2016}
{\sc U.~Bhaskar, Y.~Cheng, Y.~K. Ko, and C.~Swamy}, {\em Hardness results for
  signaling in Bayesian zero-sum and network routing games}, in Proceedings of
  the 2016 ACM Conference on Economics and Computation, 2016, pp.~479--496.

\bibitem{kai2014}
{\sc K.~Cai and H.~Ishii}, {\em Average consensus on arbitrary strongly
  connected digraphs with time-varying topologies}, IEEE Transactions on
  Automatic Control, 59 (2014), pp.~1066--1071.

\bibitem{chakraborti2015}
{\sc A.~Chakraborti, D.~Challet, A.~Chatterjee, M.~Marsili, Y.~C. Zhang, and
  B.~K. Chakrabarti}, {\em Statistical mechanics of competitive resource
  allocation using agent-based models}, Physics Reports-Review Section of
  Physics Letters, 552 (2015), pp.~1--25.

\bibitem{chen2020}
{\sc G.~Chen, K.~Cao, and Y.~Hong}, {\em Learning implicit information in
  Bayesian games with knowledge transfer}, Control Theory and Technology, 18
  (2020), pp.~315--323.

\bibitem{chen2021}
{\sc Y.~Chen, A.~Hashemi, and H.~Vikalo}, {\em Decentralized optimization on
  time-varying directed graphs under communication constraints}, in ICASSP
  2021-2021 IEEE International Conference on Acoustics, Speech and Signal
  Processing (ICASSP), IEEE, pp.~3670--3674.

\bibitem{fang2021}
{\sc X.~Fang, G.~Wen, J.~Zhou, and W.~X. Zheng}, {\em Distributed adaptive Nash
  equilibrium seeking over multi-agent networks with communication
  uncertainties}, in 2021 60th IEEE Conference on Decision and Control (CDC),
  pp.~3387--3392.

\bibitem{fey2008}
{\sc M.~Fey}, {\em Rent-seeking contests with incomplete information}, Public
  Choice, 135 (2008), pp.~225--236.

\bibitem{gharesifard2013}
{\sc B.~Gharesifard and J.~Cortés}, {\em Distributed convergence to Nash
  equilibria in two-network zero-sum games}, Automatica, 49 (2013),
  pp.~1683--1692.

\bibitem{grobhans2013}
{\sc M.~Großhans, C.~Sawade, M.~Brückner, and T.~Scheffer}, {\em Bayesian
  games for adversarial regression problems}, in International Conference on
  Machine Learning, PMLR, pp.~55--63.

\bibitem{guos2021}
{\sc S.~Guo, H.~Xu, and L.~Zhang}, {\em Existence and approximation of
  continuous Bayesian Nash equilibria in games with continuous type and action
  spaces}, SIAM Journal on Optimization, 31 (2021), pp.~2481--2507.

\bibitem{guow2021}
{\sc W.~Guo, M.~I. Jordan, and T.~Lin}, {\em A variational inequality approach
  to Bayesian regression games}, in 2021 60th IEEE Conference on Decision and
  Control (CDC), IEEE, pp.~795--802.

\bibitem{harsanyi1967}
{\sc J.~C. Harsanyi}, {\em Games with incomplete information played by
  “Bayesian” players, I--III part I. the basic model}, Management science,
  14 (1967), pp.~159--182.

\bibitem{burr2021}
{\sc I.~Hogeboom-Burr and S.~Yüksel}, {\em Comparison of information
  structures for zero-sum games and a partial converse to Blackwell ordering in
  standard Borel spaces}, SIAM Journal on Control and Optimization, 59 (2021),
  pp.~1781--1803.

\bibitem{huangl2021}
{\sc L.~Huang and Q.~Zhu}, {\em Convergence of Bayesian Nash equilibrium in
  infinite Bayesian games under discretization},  (2021),
  \url{https://arxiv.org/abs/2102.12059}.

\bibitem{huangs2021}
{\sc S.~Huang, J.~Lei, Y.~Hong, and U.~V. Shanbhag}, {\em No-regret distributed
  learning in two-network zero-sum games}, in 2021 60th IEEE Conference on
  Decision and Control (CDC), pp.~924--929.

\bibitem{krishna2009}
{\sc V.~Krishna}, {\em Auction Theory}, Academic press, 2009.

\bibitem{lakshmivarahan1982}
{\sc S.~Lakshmivarahan and K.~S. Narendra}, {\em Learning algorithms for
  two-person zero-sum stochastic games with incomplete information: A unified
  approach}, SIAM Journal on Control and Optimization, 20 (1982), pp.~541--552.

\bibitem{lei2018}
{\sc J.~Lei, H.-F. Chen, and H.-T. Fang}, {\em Asymptotic properties of
  primal-dual algorithm for distributed stochastic optimization over random
  networks with imperfect communications}, SIAM Journal on Control and
  Optimization, 56 (2018), pp.~2159--2188.

\bibitem{li2018}
{\sc L.~Li, C.~Langbort, and J.~Shamma}, {\em An LP approach for solving
  two-player zero-sum repeated Bayesian games}, IEEE Transactions on Automatic
  Control, 64 (2018), pp.~3716--3731.

\bibitem{lou2015}
{\sc Y.~Lou, Y.~Hong, L.~Xie, G.~Shi, and K.~H. Johansson}, {\em Nash
  equilibrium computation in subnetwork zero-sum games with switching
  communications}, IEEE Transactions on Automatic Control, 61 (2015),
  pp.~2920--2935.

\bibitem{meirowitz2003}
{\sc A.~Meirowitz}, {\em On the existence of equilibria to Bayesian games with
  non-finite type and action spaces}, Economics Letters, 78 (2003),
  pp.~213--218.

\bibitem{milgrom1985}
{\sc P.~R. Milgrom and R.~J. Weber}, {\em Distributional strategies for games
  with incomplete information}, Mathematics of Operations Research, 10 (1985),
  pp.~619--632.

\bibitem{nedic2015}
{\sc A.~Nedić and A.~Olshevsky}, {\em Distributed optimization over
  time-varying directed graphs}, IEEE Transactions on Automatic Control, 60
  (2015), pp.~601--615.

\bibitem{Polyak1987}
{\sc B.~T. Polyak}, {\em Introduction to Optimization}, Optimization Software,
  New York, 1987.

\bibitem{rams2009}
{\sc S.~S. Ram, A.~Nedić, and V.~V. Veeravalli}, {\em Incremental stochastic
  subgradient algorithms for convex optimization}, SIAM Journal on
  Optimization, 20 (2009), pp.~691--717.

\bibitem{rynne2007}
{\sc B.~Rynne and M.~A. Youngson}, {\em Linear Functional Analysis}, Springer
  Science \& Business Media, 2007.

\bibitem{sattler2020}
{\sc F.~Sattler, S.~Wiedemann, K.-R. Müller, and W.~Samek}, {\em Robust and
  communication-efficient federated learning from non-i.i.d. data}, IEEE
  Transactions on Neural Networks and Learning Systems, 31 (2020),
  pp.~3400--3413.

\bibitem{sola2014}
{\sc M.~Sola and G.~M. Vitetta}, {\em Demand-side management in a smart
  micro-grid: A distributed approach based on Bayesian game theory}, in 2014
  IEEE International Conference on Smart Grid Communications (SmartGridComm),
  pp.~656--661.

\bibitem{ui2016}
{\sc T.~Ui}, {\em Bayesian Nash equilibrium and variational inequalities},
  Journal of Mathematical Economics, 63 (2016), pp.~139--146.

\bibitem{wang2018}
{\sc J.~Wangni, J.~Wang, J.~Liu, and T.~Zhang}, {\em Gradient sparsification
  for communication-efficient distributed optimization}, Advances in Neural
  Information Processing Systems, 31 (2018).

\bibitem{xu2022}
{\sc G.~Xu, G.~Chen, H.~Qi, and Y.~Hong}, {\em Efficient algorithm for
  approximating Nash equilibrium of distributed aggregative games}, IEEE
  Transactions on Cybernetics,  (2022), pp.~1--13.

\bibitem{yu2021}
{\sc Z.~Yu, D.~W.~C. Ho, D.~Yuan, and J.~Liu}, {\em Distributed stochastic
  constrained composite optimization over time-varying network with a class of
  communication noise}, IEEE Transactions on Cybernetics,  (2021), pp.~1--13.

\bibitem{zeng2011}
{\sc Y.~Zeng, E.~Gunawan, and Y.~L. Guan}, {\em Distributed power allocation
  for network MIMO with a Bayesian game-theoretic approach}, in 2011 8th
  International Conference on Information, Communications \& Signal Processing,
  pp.~1--5.

\end{thebibliography}
\end{document}